\documentclass[11pt]{article}

\usepackage[a4paper,margin=25mm]{geometry}
\usepackage[T1]{fontenc}

\usepackage{graphicx} 
\usepackage{tikz}
\usetikzlibrary{shapes.multipart, positioning}

\usepackage{color}

\title{Strong invariants and Tverberg numbers in convexity spaces}

\author{
Minho Cho\thanks{School of Computational Sciences, Korea Institute for Advanced Study, Seoul, South Korea. \texttt{minhocho.math@gmail.com}. Supported by a KIAS Individual Grant (CG102101) at Korea Institute for Advanced Study.}
\and
Andreas F. Holmsen\thanks{Department of Mathematical Sciences, KAIST, Daejeon, South Korea; Discrete Mathematics Group, Institute for Basic Sciences, Daejeon, South Korea. \texttt{andreash@kaist.edu}. Supported by the Institute for Basic Science (IBS-R029-C1).}
\and
Attila Jung\thanks{ELTE E\"otv\"os Lor\'and University, Budapest, Hungary; Alfr\'ed R\'enyi Institute of Mathematics, Budapest, Hungary.
\texttt{jungattila@gmail.com}. Supported by the ERC Advanced Grant ``ERMiD'', and by the EXCELLENCE-24 project no.~151504 of the NRDI Fund.}
\and 
Hong Liu\thanks{Extremal Combinatorics and Probability Group, Institute for Basic Science, Daejeon, South Korea. \texttt{hongliu@ibs.re.kr}. Supported by the Institute for Basic Science under grant IBS-R029-C4.}
}

\date{}

\usepackage{amsfonts,amsmath,amssymb,amsthm}
\usepackage{comment}
\usepackage{mathtools}
\usepackage{todonotes}

\newtheorem{thm}{Theorem}[section]
\newtheorem{lem}[thm]{Lemma}

\newtheorem{prop}[thm]{Proposition}

\newtheorem{cor}[thm]{Corollary}

\newtheorem{claim}[thm]{Claim}

\theoremstyle{definition}
\newtheorem{example}[thm]{Example}

\newtheorem{definition}[thm]{Definition}
\newtheorem{observation}[thm]{Observation}
\newtheorem{rem}[thm]{Remark}
\newtheorem{question}[thm]{Question}

\newcommand{\F}{\mathcal{F}}
\newcommand{\C}{\mathcal{C}}
\newcommand{\B}{\mathcal{B}}
\newcommand{\G}{\mathcal{G}}

\newcommand{\conv}{\operatorname{conv}}
\newcommand{\cvx}{\operatorname{cvx}}
\renewcommand{\Re}{\mathbb{R}}

\newcommand{\VC}{\textup{VC}}

\newcommand{\size}[1]{\left| #1 \right|}

\begin{document}

\maketitle

\begin{abstract}
Helly, Carath\'eodory, and Radon numbers encode three different kinds of finite certificates in a convexity space: for the emptiness of an intersection, for membership in a convex hull, and for the existence of intersecting hulls. We study exact versions of these certificates, in which a subfamily must preserve the whole intersection or a subset must preserve the whole hull. Our first main result shows that, for finite configurations in an arbitrary convexity space, five a priori different boundedness conditions are equivalent: VC-dimension, strong Helly number, strong Carath\'eodory number, comatching number, and strong Radon number (with the expected additive-one shift). We also obtain equivalent layered Tverberg-type decompositions and colorful consequences.

The common mechanism is exposed by the bipartite incidence graph between points and a generating family. For finite spaces, the unique minimal generator yields a natural dual convexity space; we characterize double dualization and prove that the strong parameters are duality invariant. The same model gives a polynomial-size, $O(t^4)$, realization of Bukh's counterexample to the Calder--Eckhoff partition conjecture.

Finally, we obtain the first Tverberg bound for separable convexity
spaces that is simultaneously linear in the number of parts and
polynomial in the Radon number. If an $S_3$-separable convexity space
has Helly number $h$ and its halfspaces have VC-dimension $d$, then
$r_t=O(dh\log h)\,t$; in particular, Radon number $r$ gives
$r_t=O(r^2\log r)\,t$. The bound attains the weak-Eckhoff scale $O(rt)$ whenever the Helly number is bounded. In particular, for axis-parallel box convexity in $\Re^k$ our theorem gives the optimal order
$r_t=O(rt)$ uniformly in every dimension. This appears to be the
first dimension-uniform estimate of weak-Eckhoff order for box
convexity, whereas the previous direct theory was confined to
dimension three.
\end{abstract}

\section{Introduction}\label{sec:intro}

Helly's, Carath\'eodory's, and Radon's theorems are three basic finite-certificate principles in convexity. Helly's theorem gives a small certificate that an intersection is empty, Carath\'eodory's theorem gives a small certificate that a point belongs to a convex hull, and Radon's theorem forces two intersecting hulls. In Euclidean space these principles reinforce one another, but their familiar proofs use order, topology, and linear separation. A basic problem of abstract convexity is to determine which implications survive when only the closure structure remains.

A \emph{convexity space} is a pair $(X,\C)$ such that $\varnothing,X\in\C$, the family $\C$ is closed under arbitrary intersections, and the union of every inclusion chain in $\C$ again belongs to $\C$. Its members are the convex sets, and
$\conv(Y)=\bigcap\{C\in\C:Y\subseteq C\}$
denotes the convex hull of $Y\subseteq X$. This framework, introduced by Levi~\cite{levi1951helly}, includes ordinary and lattice convexity, closure systems arising from finite matroids, and many combinatorial convexities; see~\cite{barany2022helly,holmsen2024helly} for broader context.

The classical parameters measure different finite witnesses. The Helly number $h$ bounds the size of an empty-intersection certificate, the Carath\'eodory number $c$ bounds a certificate for membership in a hull, and the Radon number $r=r_2$ forces a nontrivial partition with intersecting hulls. More generally, the $t$-th Tverberg number $r_t$ is the least $m$ such that every $m$-point set has a partition into $t$ nonempty parts whose convex hulls meet. In Euclidean space,
$r_t\bigl(\cvx(\Re^d)\bigr)=(d+1)(t-1)+1$
by Tverberg's theorem~\cite{tverberg1966generalization}. Calder~\cite{calder1971some} and Eckhoff~\cite{eckhoff1979radon} conjectured that the same formula, written as
\[
r_t\le (r-1)(t-1)+1,
\]
should follow from the Radon number alone in every convexity space. Bukh disproved this by constructing, for every $t\ge3$, a space with $r=4$ and $r_t\ge3t-1$~\cite{bukh2010radon}. Thus the ordinary Radon number does not determine the sharp Tverberg constant.

The failure of the exact partition conjecture leaves open the \emph{weak Eckhoff conjecture}, which asks whether
\[
r_t=O(rt)
\]
holds in every convexity space. This is the smallest possible order of magnitude even within the $S_4$-separable class: Euclidean convexity has $r=d+2$ and attains $(r-1)(t-1)+1$. P\'alv\"olgyi proved that $r_t$ is linear in $t$ for every fixed $r$, but the known general dependence on $r$ is enormous~\cite{palvolgyi2022radon}.

Bukh's obstruction suggests two complementary ways forward. One may strengthen the certificates themselves, requiring a small subfamily to preserve an entire intersection or a small subset to preserve an entire convex hull. Alternatively, one may retain the ordinary Radon number but impose a weak geometric separation axiom. We pursue both directions.

\subsection{Main results}\label{subsec:intro:contributions}
Ordinary Helly theory records whether an intersection is empty; in several important settings one can preserve the entire intersection. For example, Alon, Jin, and Sudakov proved that every finite family of Hamming balls of a fixed radius $q$ has a subfamily of at most $2^{q+1}$ members with exactly the same intersection~\cite{alon2024helly}. Similar exact-intersection statements occur in algebraic, model-theoretic, and discrete-geometric settings, but their common structure is not apparent from the usual Helly formulation.

We isolate this structure through the following parameters. The \emph{strong Helly number} is the least $d$ such that every finite $\F\subseteq\C$ contains $\F'\subseteq\F$ with $|\F'|\le d$ and $\bigcap\F'=\bigcap\F$. Dually, the \emph{strong Carath\'eodory number} is the least $d$ such that every finite $Y\subseteq X$ contains $Y'\subseteq Y$ with $|Y'|\le d$ and $\conv(Y')=\conv(Y)$. The \emph{strong Radon number} is the least $m$ such that every finite $S\subseteq X$ with $|S|\ge m$ has a nontrivial partition $S=S_1\mathbin{\dot\cup}S_2$ satisfying $\conv(S_1)\subseteq\conv(S_2)$.

A fourth parameter is the \emph{comatching number}. For a set system $\F\subseteq2^X$, it is the largest $m$ for which there are distinct points $p_1,\ldots,p_m$ and sets $C_1,\ldots,C_m\in\F$ such that $p_i\in C_j$ exactly when $i\ne j$. This configuration appears implicitly in many exact Helly arguments, including Deza and Frankl's work on bounded-degree hypersurfaces~\cite{deza1987helly}, and was recently named and used systematically by Pohoata, Yang, and Zhang~\cite{pohoata2025colorful}. Finally, $\VC(\C)$ is the largest size of a set shattered by the traces of $\C$.

Our first main theorem identifies all these notions.

\begin{thm}\label{thm:main:short}
Let $(X,\C)$ be a convexity space and let $d\ge1$. The following are equivalent:
\begin{enumerate}
    \item $\VC(\C)\le d$;
    \item the strong Helly number is at most $d$;
    \item the strong Carath\'eodory number is at most $d$;
    \item the comatching number is at most $d$;
    \item the strong Radon number is at most $d+1$.
\end{enumerate}
\end{thm}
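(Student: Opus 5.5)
The plan is to route every parameter through one combinatorial object. Call a finite set $S\subseteq X$ \emph{independent} if $p\notin\conv(S\setminus\{p\})$ for every $p\in S$. I will show that each of (1)--(5) is equivalent to the statement that every independent set has at most $d$ points. Most of the work is unwinding definitions; the one place where the convexity axioms really enter is closure under intersection, in the step relating VC-dimension to comatchings.

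\textbf{Comatchings and independent sets.} First, comatchings of size $m$ correspond exactly to independent sets of size $m$. Suppose $p_1,\dots,p_m$ and $C_1,\dots,C_m$ form a comatching. Then $C_j$ is a convex set containing $S\setminus\{p_j\}$ but not $p_j$, where $S=\{p_1,\dots,p_m\}$. Hence $p_j\notin\conv(S\setminus\{p_j\})$, so $S$ is independent. Conversely, if $S$ is independent, setting $C_j=\conv(S\setminus\{p_j\})$ gives a comatching. This proves (4) $\Leftrightarrow$ ``independent sets have size $\le d$''.

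\textbf{VC-dimension.} If $S$ is shattered by $\C$, each $S\setminus\{p\}$ is a trace, which gives a comatching, so $\VC(\C)$ is at most the comatching number. Conversely, if $S$ is independent, each $S\setminus\{p\}$ is the trace of $C_p=\conv(S\setminus\{p\})$. For any $T\subsetneq S$, the intersection $\bigcap_{p\in S\setminus T}C_p$ is convex and has trace exactly $T$, and $S$ itself is the trace of $X$. So $S$ is shattered. This is where closure under intersection is used, and it gives (1) $\Leftrightarrow$ (4).

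\textbf{Strong Carath\'eodory and strong Radon.} Given a finite $Y$, choose an inclusion-minimal $Y'\subseteq Y$ with $\conv(Y')=\conv(Y)$. Minimality forces $Y'$ to be independent: if $p\in\conv(Y'\setminus\{p\})$, then $\conv(Y'\setminus\{p\})=\conv(Y')$. So independent sets of size $\le d$ give strong Carath\'eodory number $\le d$. Conversely, an independent set of size $d+1$ admits no proper subset with the same hull, since a proper subset lies inside some $S\setminus\{p\}$ and $p$ is missing from that hull.

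For strong Radon, let $|S|\ge d+1$. If $S$ is not independent, some $p$ satisfies $p\in\conv(S\setminus\{p\})$, and then $S_1=\{p\}$, $S_2=S\setminus\{p\}$ works. If $S$ is independent of size $d+1$, take any nontrivial partition and any $p\in S_1$. Then $p\notin\conv(S\setminus\{p\})\supseteq\conv(S_2)$, so $\conv(S_1)\not\subseteq\conv(S_2)$ and the partition fails. Thus strong Radon number $\le d+1$ is equivalent to independent sets having size $\le d$.

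\textbf{Strong Helly.} Let $\F$ be a finite family, and choose an inclusion-minimal $\F'\subseteq\F$ with $\bigcap\F'=\bigcap\F$. For each $C\in\F'$, minimality gives a point $p_C\in\bigcap(\F'\setminus\{C\})\setminus C$. These points are automatically distinct and form a comatching with $\F'$, so $|\F'|$ is at most the comatching number.

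Conversely, a comatching $C_1,\dots,C_m$ is itself a family with no proper subfamily of the same intersection: omitting $C_j$ admits $p_j$ into the intersection. So strong Helly number $\ge m$. Hence (2) $\Leftrightarrow$ (4).

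The remaining details are the trivial cases, namely empty families and the convention $\bigcap\varnothing=X$. These are harmless because $d\ge1$.
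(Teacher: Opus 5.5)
Your proposal is correct and follows essentially the paper's own proof (via Theorem~\ref{thm:strongEquivalences}). Both route everything through sets in convex position (your \emph{independent} sets), read off comatchings from the hulls $\conv(S\setminus\{p\})$, and use inclusion-minimal subfamilies and subsets for the strong Helly and strong Carath\'eodory numbers and the singleton partition for strong Radon. The only cosmetic difference is that you show shattering via intersections of the sets $\conv(S\setminus\{p\})$, whereas the paper uses the identity $\conv(A)\cap S=A$ directly.
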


Several individual implications were previously known, sometimes in different terminology and sometimes only for particular classes of convexity spaces; we review them in Subsection~\ref{subsec:strong:history}. The novelty is the complete identification of the parameters, including independence from the chosen generating family, together with two-sided layered decompositions and the resulting duality and colorful consequences. The full statement, Theorem~\ref{thm:strongEquivalences}, in particular gives layered strong Tverberg decompositions on both the point side and the convex-set side.

Theorem~\ref{thm:main:short} is also a transfer principle: a bound proved in any one of its five languages immediately yields all the others. For the convexity generated by Hamming balls of radius $q$ (in ambient dimension greater than $q$ and over an alphabet of size at least two), the theorem of Alon, Jin, and Sudakov implies that the common parameter is $2^{q+1}$; hence the strong Carath\'eodory and comatching numbers are $2^{q+1}$, the strong Radon number is $2^{q+1}+1$, and the layered and colorful consequences below hold with the same dimension-free parameter. For the closure convexity of a matroid of rank $\rho$, the common parameter is exactly $\rho$. For axis-parallel box convexity in $\Re^k$, it is $2k$: an intersection of boxes is determined by at most one extremal constraint in each coordinate direction, and a box hull is determined by at most one point realizing each coordinate extreme. Likewise, the comatching bounds of Deza and Frankl for bounded-degree projective hypersurfaces automatically yield exact Helly, Carath\'eodory, and Radon statements for the generated convexity. These examples show that the common invariant captures coding-theoretic radius, matroid rank, coordinate complexity, and algebraic dimension within a single framework.

The apparent asymmetry is that the Helly property is a meet statement about convex sets or generators, whereas the Carath\'eodory property is a closure statement about points. Neither the symmetry between these two sides nor independence from the generating family is visible from the definitions. We introduce a bipartite incidence model which removes this asymmetry: intersections and convex hulls become the two common-neighborhood operations. Consequently, all the equivalences reduce to the exclusion of a single induced comatching configuration rather than requiring a collection of unrelated implications.

The equivalence also has immediate colorful consequences. As a representative application, we obtain the following strong colorful Tverberg theorem.

\begin{thm}[Colorful Tverberg theorem]
Let $d,t\ge1$, let $(X,\C)$ satisfy $\VC(\C)\le d$, and let $S_1,\ldots,S_{d+1}\subseteq X$ be pairwise disjoint finite sets. If
$|S_i|\ge d(t-1)+1$
for every $i\in[d+1]$, then there exist pairwise disjoint transversals $T_1,\ldots,T_t$ of the color classes whose convex hulls have a common point; that is,
\[
\bigcap_{j=1}^t\conv(T_j)\ne\varnothing.
\]
\end{thm}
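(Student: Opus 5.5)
The plan is to reduce the statement to a layered peeling of each color class, using the strong Carath\'eodory number, and then to assemble the transversals layer by layer. By Theorem~\ref{thm:main:short}, $\VC(\C)\le d$ gives strong Carath\'eodory number at most $d$, comatching number at most $d$ and strong Radon number at most $d+1$; all three will be used.

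\textbf{Step 1 (peeling).} For each color $i$, peel $S_i$ into layers. Let $R_0=S_i$. At stage $k$, choose $L^i_k\subseteq R_{k-1}$ with $|L^i_k|\le d$ and $\conv(L^i_k)=\conv(R_{k-1})$, and set $R_k=R_{k-1}\setminus L^i_k$. Since $|S_i|\ge d(t-1)+1$, the layers $L^i_1,\dots,L^i_{t-1}$ are all nonempty and some point survives into $R_{t-1}$. Every point removed later lies in $\conv(L^i_k)$, so the hulls $\conv(L^i_1)\supseteq\conv(L^i_2)\supseteq\cdots$ are nested. This is the one-color layered strong Tverberg decomposition on the point side referred to in Theorem~\ref{thm:strongEquivalences}.

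\textbf{Step 2 (the comatching observation).} Every $(d+1)$-element transversal $T=\{p_1,\dots,p_{d+1}\}$ has some $p_j\in\conv(T\setminus\{p_j\})$. Otherwise the points $p_1,\dots,p_{d+1}$ and the convex sets $C_j=\conv(T\setminus\{p_j\})$ would satisfy $p_i\in C_j$ exactly when $i\ne j$. That is a comatching of size $d+1$, which is impossible. So each rainbow simplex is degenerate in the sense that one vertex is redundant.

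\textbf{Step 3 (assembly).} The plan is to build $T_1,\dots,T_t$ by induction on $t$ while maintaining a common point $x$. For $t=1$ take any transversal. For the induction step:
\begin{itemize}
\item Remove from every color class a block of at most $d$ points. Choose this block to be the outermost layer $L^i_1$, padded arbitrarily to exactly $d$ points when $|L^i_1|<d$ and $|S_i|$ allows it.
\item The remaining classes have at least $d(t-2)+1$ points each. Induction gives disjoint transversals $T_2,\dots,T_t$ inside them, all of whose hulls contain some point $x$.
\item Find $x$ inside $\conv(R^i_1)\subseteq\conv(L^i_1)$ for the relevant colors, and choose $T_1$ from the removed outer layers so that $x\in\conv(T_1)$.
\end{itemize}
To choose $T_1$, apply the strong Carath\'eodory property to $\bigcup_i L^i_1\cup\{x\}$, and use Step 2 to discard redundant points until exactly one point per color remains.

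\textbf{Main obstacle.} The hard step is this last one: ensuring that $x$ lies in the hull of a single \emph{rainbow} selection from the outer layers. The hulls of different colors need not intersect, so $x$ need not lie in any $\conv(L^i_1)$. My first attempt will be to strengthen the induction hypothesis: require that the common point $x$ is itself realized as a redundant vertex, in the sense of Step 2, of one of the $T_j$. Then I would choose $T_1$ by exchanging along colors, replacing a point of color $i$ in a candidate $T_1$ by a point of $L^i_1$ whenever that increases $\conv(T_1)$ toward $x$. The comatching bound, together with finiteness, should rule out an infinite or cyclic exchange sequence. If this fails, the fallback is to run the whole argument in the bipartite incidence model and use the layered decomposition on the convex-set side instead.
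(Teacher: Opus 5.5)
\textbf{The assembly step is a genuine gap.} Your Step~3 carries the whole proof, and you never show that a rainbow $T_1$ can be chosen from the removed blocks with $x\in\conv(T_1)$. As formulated, such a $T_1$ need not exist.

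Take the flats of the linear matroid on labelled vectors in $\Re^3$ (VC-dimension equals rank, so $d=3$), with $t=2$. Let $S_1=\{e_1,e_2,e_3,y\}$ with $y=e_1+e_2+e_3$, and let $S_2,S_3,S_4$ each consist of four labelled copies of $e_1$. Your Step~1 may take $L^1_1=\{e_1,e_2,e_3\}$ and three copies of $e_1$ from each other class. The base case then returns $T_2=\{y,c_2,c_3,c_4\}$, where the $c_j$ are the remaining copies, with common point $x=y\in\conv(T_2)$. Every rainbow transversal disjoint from $T_2$ has hull inside $\operatorname{span}(e_1)$, $\operatorname{span}(e_1,e_2)$ or $\operatorname{span}(e_1,e_3)$, and none of these contains $y$. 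The theorem itself still holds here, with a copy of $e_1$ as common point. The problem is that an arbitrary inductive output $(T_2,\dots,T_t;x)$ carries too little information to be extended.

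Your proposed repair cannot work. Strong Carath\'eodory applied to $\bigcup_i L^i_1\cup\{x\}$ returns at most $d$ points with no control over colors, and discarding points can never produce a rainbow $(d+1)$-set. The strengthened hypothesis, that $x$ is a redundant vertex of some $T_j$, happens to fix this example. However, you give no argument that a rainbow hull covering such an $x$ exists. Your exchange process, even if it terminates, ends at an inclusion-maximal rainbow hull, and nothing forces that hull to contain a prescribed point.

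\textbf{The missing idea.} Do not prescribe the common point; let it come for free. Your Step~2 together with a maximality argument gives exactly what is needed. Choose a rainbow $(d+1)$-tuple $\{p_1,\dots,p_{d+1}\}$ whose hull is inclusion-maximal. By Step~2 some vertex, say $p_{d+1}\in S_i$, is redundant. Maximality then forces $\conv\{p_1,\dots,p_d,q\}=\conv\{p_1,\dots,p_d\}$ for every $q\in S_i$, so the $d$-tuple from the other colors swallows the entire class $S_i$.

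The paper's proof iterates this. Each round deletes only these $d$ points and records the omitted color. As long as no color has been omitted $t$ times, color $i$ has lost $\sum_{h\ne i}a_h\le d(t-1)<|S_i|$ points, where $a_h$ is the number of rounds so far in which color $h$ was omitted. So the process runs until some color $\ell$ has been omitted $t$ times. Any surviving point of $S_\ell$ then lies in all $t$ of those hulls, and each $d$-tuple is completed to a transversal by a distinct point of $S_\ell$. This needs neither the peeling of every class in your Step~1 nor the covering of any given point.
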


We also compare the strong Helly number with two refinements from the literature. The \emph{breadth} asks for exact intersection certificates only when the total intersection is nonempty, while the \emph{comatching number with intersection} requires one additional point common to all sets in the comatching. We prove that these independently introduced parameters coincide and use this identification to sharpen a strong colorful Helly statement of Pohoata, Yang, and Zhang~\cite{aschenbrenner2016vapnik,pohoata2025colorful}.

\medskip
\noindent\textbf{Incidence models and duality.}
A subfamily $\G\subseteq\C$ is a generator if every convex set is an intersection of members of $\G$. The bipartite incidence graph between $X$ and $\G$ determines the closure operator through common neighborhoods. For finite convexity spaces, the minimal generator is unique, so transposing the two sides produces a natural dual space. We characterize when double dualization recovers the original space and prove that the equivalent strong parameters are invariant under this duality. This connects exact Helly certificates and exact hull certificates at the structural level, rather than merely through numerical inequalities.

The same model gives a shorter and substantially smaller realization of Bukh's counterexample. We encode the required local incidence pattern directly with only $O(t^4)$ points. Besides replacing the large auxiliary realization by a polynomial-size one, the model separates the two mechanisms in the construction: local witnesses that force every four-point set to be Radon, and a global configuration that obstructs a Tverberg $t$-partition.

\medskip
\noindent\textbf{Tverberg numbers under weak separation.}
We next follow the second route suggested above. A \emph{halfspace} is a convex set whose complement is also convex. Following van de Vel~\cite{van1993theory}, an $S_3$-separable space allows separation of a point from a convex set by complementary halfspaces, whereas $S_4$-separability allows separation of two disjoint convex sets. Recent work under the stronger $S_4$ axiom gives
$r_t=O(rt^2\log t)$
by Alon and Smorodinsky~\cite{AlonSmorodinsky}, and Keller and Smorodinsky subsequently obtained
$r_t=O(r^2t\log t)$
for $t>r$~\cite{keller2026colorful}. We obtain a genuinely linear dependence on the number of parts under the weaker $S_3$ axiom.

\begin{thm}\label{thm:separable}
Let $(X,\C)$ be an $S_3$-separable convexity space with Helly number $h$, and let $\B$ be its family of halfspaces. If $\VC(\B)\le d$, then
\[
r_t=O\bigl(dh\log h\bigr)t.
\]
In particular, if the Radon number is $r$, then
\[
r_t=O(r^2\log r)t.
\]
\end{thm}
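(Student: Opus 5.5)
The plan is to combine a fractional Helly / weak $\varepsilon$-net argument for the halfspace system with a greedy peeling procedure that extracts Tverberg parts one at a time at linear cost.

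\textbf{Step 1: reduce to halfspaces.} In an $S_3$-separable space, a point $p\notin\conv(Y)$ is separated from $\conv(Y)$ by a halfspace $H\in\B$ with $p\in H$ and $H\cap\conv(Y)=\varnothing$. Hence $p\in\bigcap_j\conv(T_j)$ exactly when every halfspace containing $p$ meets every $T_j$. So it suffices to find a point $p$ and disjoint parts $T_1,\dots,T_t$ such that each $T_j$ is a transversal of the family $\{H\in\B: p\in H\}$. Restricted to the finite set $S$, this is a hitting-set problem for a set system of VC-dimension at most $d$.

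\textbf{Step 2: centerpoint-type depth.} Next, show that some point $p$ (possibly not in $S$) is deep: every halfspace containing $p$ contains at least $\alpha|S|$ points of $S$, with $\alpha=\Omega(1/h)$. This is the role of the Helly number. Consider the family of convex sets $\conv(S\setminus H')$ for complementary halfspaces $H'$ with $|H'\cap S|<\alpha|S|$. When $\alpha\le 1/h$, any $h$ of these sets miss fewer than $|S|$ points in total, so they share a point of $S$. By Helly, the whole family has a common point $p$. If a halfspace $H\ni p$ had small trace, then (by $S_3$ applied to the convex set $X\setminus H$) $p$ would lie outside some member of the family, a contradiction. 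I expect this to need care: the complement of $H$ is a halfspace only because halfspaces come in complementary pairs, which is exactly what is used here.

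\textbf{Step 3: peel off parts.} Take an $\varepsilon$-net with $\varepsilon=\alpha/2$ for the traces on $S$ of halfspaces containing $p$. By the $\varepsilon$-net theorem, this family has a net of size $O\bigl((d/\varepsilon)\log(1/\varepsilon)\bigr)=O(dh\log h)$, and the net hits every halfspace containing $p$ with trace at least $\alpha|S|/2$. Remove the net as $T_1$ and repeat on the same $p$ with the remaining points. Each removal costs $O(dh\log h)$ points.

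Depth stays at least $\alpha|S|/2$ as long as fewer than $\alpha|S|/2$ points have been removed. Taking $|S|= C\,dh\log h\cdot t$ with $C$ large enough ensures that $t$ rounds remove at most $\alpha|S|/2$ points. This uses $\alpha\approx 1/h$ together with $|S|\alpha\gtrsim d\log h\cdot t$, which gives the right order $dh\log h\,t$. Any leftover points are dumped into $T_1$, which yields $r_t=O(dh\log h)\,t$.

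\textbf{Step 4: the corollary.} Use the standard facts $h\le r-1$ (Levi) and that the VC-dimension of $\B$ is less than $r$: if a set of $r$ points were shattered by halfspaces, then the Radon partition of that set could be separated by a complementary pair, which is impossible. These give $O(r^2\log r)t$.

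The main obstacle is Step 2, since the depth bound $\alpha=\Omega(1/h)$ must come from Helly alone without a fractional Helly theorem. If the direct argument fails, I would fall back on a fractional Helly theorem for bounded VC-dimension (Holmsen–Lee type), at the cost of a worse $\alpha$.
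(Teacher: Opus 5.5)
\textbf{Gap in Step 3.} Your Steps 1, 2 and 4 are sound. Step 2 is essentially the paper's Helly-type centerpoint, and only the convexity of $X\setminus H$ is used there, not $S_3$. Step 3, however, does not give the claimed bound: the final arithmetic is off by a factor of $h$.

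Each net has size $m=\Theta\bigl((d/\varepsilon)\log(1/\varepsilon)\bigr)=\Theta(dh\log h)$, so $t$ rounds of peeling remove $\Theta(dh\log h)\,t$ points. Your depth bookkeeping only tolerates removing $\alpha|S|/2\approx|S|/(2h)$ points. You therefore need $|S|/(2h)\gtrsim dh\log h\cdot t$, that is, $|S|=\Omega(dh^2\log h)\,t$. Your condition ``$|S|\alpha\gtrsim d\log h\cdot t$'' undercounts the cost of a single net by the factor $1/\varepsilon\approx h$. With $|S|=C\,dh\log h\cdot t$ and $C$ absolute, the depth of a halfspace through $p$ can be exhausted long before the $t$-th round.

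So your argument proves only $r_t=O(dh^2\log h)\,t$, and the corollary becomes $O(r^3\log r)\,t$ rather than $O(r^2\log r)\,t$. This loss is built into successive deletion with worst-case depth accounting. Once about $|S|/h$ points have been removed, a halfspace through $p$ of depth $|S|/h$ may be empty on the remaining set.

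\textbf{The missing idea.} Choose all $t$ nets at once, each an $\varepsilon$-net of the \emph{whole} set $Y$, so that depth is never depleted. The paper does this as follows.
\begin{itemize}
\item Take $|Y|\ge 2m_0t$ with $\varepsilon=1/(2h)$ and $m_0=\lceil c_0(d/\varepsilon)\log(2/\varepsilon)\rceil$.
\item Take a uniformly random permutation of $Y$ and cut its first $2m_0t$ elements into $2t$ consecutive blocks of size $m_0$.
\item Each block is marginally a uniform random $m_0$-subset of $Y$, so it is an $\varepsilon$-net for the traces of $\B$ on $Y$ with probability at least $3/4$.
\item The expected number of good blocks is therefore at least $3t/2$, so some outcome yields $t$ pairwise disjoint $\varepsilon$-nets $Y_1,\ldots,Y_t$.
\item Every halfspace through the centerpoint $x$ contains at least $|Y|/h>\varepsilon|Y|$ points of $Y$, so it meets every $Y_i$. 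Your Step 1 then gives $x\in\bigcap_i\conv(Y_i)$.
\end{itemize}
Because the nets may use up essentially all of $Y$, $|Y|=2m_0t=O(dh\log h)\,t$ suffices.
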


The proof combines a Helly-type centerpoint with a simultaneous packing of disjoint $\varepsilon$-nets. The centerpoint lies in the hull of every sufficiently large subset of the original configuration. A random equipartition then produces linearly many pairwise disjoint nets at once, and $S_3$-separation converts a failure of hull containment into a complementary halfspace missed by one of the nets. Choosing all nets simultaneously avoids the loss incurred by successive deletion.

The refined bound in Theorem~\ref{thm:separable} is substantially
stronger than its formulation in terms of the Radon number alone. If
the Helly number is bounded, then $d\le r-1$ gives
\[
r_t=O(rt),
\]
which attains the weak-Eckhoff scale. A basic example is
axis-parallel box convexity in $\Re^k$: it is $S_4$-separable, has
Helly number $2$, and has Radon number $\Theta(\log k)$; see
\cite{Eckhoff2001, holmsen2024helly}. Theorem~\ref{thm:separable} therefore gives
$r_t=O(rt)$ for this entire family, in every dimension $k$, despite the
unbounded Radon number. This is worth contrasting with the direct
approach. Eckhoff~\cite{Eckhoff2001} determined the Tverberg number for box convexity
up to small constants only in $\Re^3$, where the Radon number is a fixed
constant, showing $\lceil 5t/2\rceil\le r_t\le\lceil 18t/7\rceil$; he
notes that his permutation-based methods do not appear to extend to
higher dimensions, and that for $k\ge3$ the problem is wide open, with
not even a conjectured form for the answer. In $\Re^3$ his constants are
of course sharper than ours; the point is that no comparable bound was
previously available in any higher dimension. Our result bypasses this
difficulty entirely: the bounded Helly number alone forces the
weak-Eckhoff scale $r_t=O(rt)$ uniformly in $k$, without any
dimension-specific analysis.

At the opposite extreme, bounded halfspace VC-dimension gives
\[
r_t=O(r\log r)t.
\]
A natural example in this regime is the geodesic convexity of the
Johnson graph $J(n,2)$. More precisely, let
$X_n=\binom{[n]}2,$
join two members of $X_n$ when they intersect in one element, and let
$\C_n$ consist of the geodesically convex vertex sets of the resulting
graph: a set belongs to $\C_n$ if it contains every shortest path
between each pair of its vertices. The graph $J(n,2)$ is the basis
graph of the uniform matroid $U_{2,n}$ and, equivalently, the
$1$-skeleton of the hypersimplex
$\Delta(n,2)
 =
\conv\left\{\mathbf 1_e:e\in\binom{[n]}2\right\}.$
Basis graphs of matroids are $S_3$-separable for geodesic convexity
\cite{chepoi2024separation}. For $n\ge5$, this space is not
$S_4$-separable. An elementary calculation shows that the nontrivial halfspaces of
$(X_n,\C_n)$ are precisely the coordinate stars
$H_v^+
 =
\left\{
e\in\binom{[n]}2:v\in e
\right\}$
and their complements $H_v^-=X_n\setminus H_v^+$. Consequently,
$\VC\bigl(\{H_v^+,H_v^-:v\in[n]\}\bigr)=3,$
$h(X_n,\C_n)=n-1,$
and 
$r(X_n,\C_n)=n.$
Theorem~\ref{thm:separable} therefore gives
\[
r_t(X_n,\C_n)
 =
O(n\log n)t
 =
O(r\log r)t.
\]
For this particular space an even sharper estimate follows from its
additional exact-certificate structure. Its rank, equivalently
$\VC(\C_n)$, equals $n-1$, so Jamison's rank theorem
\cite{jamison1981partition} gives
\[
r_t(X_n,\C_n)\le (n-1)(t-1)+1.
\]
Thus the general $S_3$ theorem recovers the optimal linear scale for
this natural polytopal example up to one logarithmic factor using only
the Helly number and the much simpler halfspace system.

There are also natural examples in which neither of the two parameters
in Theorem~\ref{thm:separable} is bounded. For lattice convexity on
$\mathbb Z^k$, whose convex sets are the sets
$K\cap\mathbb Z^k$ with $K\subseteq\Re^k$ convex, the halfspaces have
VC-dimension $k+1$ and the Helly number is $2^k$; for $k\ge3$, the
Radon number satisfies
$5\cdot2^{k-2}+1\le r(\mathbb Z^k)\le k(2^k-1)+3$ by results of
Sierksma and Onn (see~\cite{holmsen2024helly}). Theorem~\ref{thm:separable}
thus gives $r_t(\mathbb Z^k)=O(k^2 2^k)\,t=O(r(\log r)^2)\,t$,
recovering the scale of the best lattice-specific
bound~\cite{deLoeraEtAl2017QuantitativeTverberg},
$r_t(\mathbb Z^k)\le k2^k(t-1)+1=O(r\log r)\,t$, to within one
logarithmic factor and without any arithmetic geometry. Unlike the
Johnson-graph example, lattice convexity in dimension at least two has
unbounded rank, so this bound follows neither from the strong-invariant
theorem nor from Jamison's finite-rank bound --- a genuine application
of the separable theory beyond the exact-certificate regime.

\medskip
\noindent\textbf{Relation to colorful VC methods and further consequences.}
Keller and Smorodinsky~\cite{keller2026colorful} recently introduced a
colorful $k$-wise extension of VC-dimension and used it to obtain
Tverberg-type theorems for separable convexity spaces. Suppose, in
addition to $S_3$-separability,
that the space is $S_4$-separable, and write $r$ for its Radon number
and $h$ for its Helly number. Their $k$-wise Tverberg theorem, applied
with $k=\min\{h,t\}$, gives
\[
 r_t=O\bigl(r\min\{h,t\}\,t\log(2rt)\bigr),
\]
whereas Theorem~\ref{thm:separable} gives
\[
 r_t=O\bigl(rh\log h\bigr)t.
\]
Thus the colorful $k$-wise method is particularly effective when
$t<h$, while our centerpoint--net method is stronger in the
weak-Eckhoff regime $t\ge h$: it removes the logarithmic dependence on
$t$ and, more importantly, requires only $S_3$-separation.

The proof of Theorem~\ref{thm:separable} also strengthens the chain of
selection and piercing consequences developed in
\cite{keller2026colorful}. Put
 $a=O\bigl(dh\log h\bigr),$
where $d$ is the VC-dimension of the halfspaces. We prove both
uncolored and colorful selection lemmas with parameter $a$ and an
absolute positive selection density. It follows that every finite
point set has weak $\varepsilon$-nets of size
\[
 2\left(\frac e\varepsilon\right)^a,
\]
and that the corresponding quantitative $(p,q)$-theorem has exponent
$a$. The colorful selection lemma also yields a second Tverberg
tradeoff with $a$ color classes, each of size $O(at)$. In terms of the
Radon number, $a=O(r^2\log r)$. Under $S_4$-separation this improves
the $O(r^3)$ parameter in the corresponding results of Keller and
Smorodinsky; under $S_3$-separation it improves their
$O(r^4\log r)$ parameter and removes the auxiliary compactness
assumption used in their weak-separation adaptation. Full statements
and proofs are given in
Appendix~\ref{subsec:separable:consequences}.

\medskip
\noindent\textbf{Organization.}
Section~\ref{sec:bipartite} develops the incidence model and duality. Section~\ref{sec:strong} proves the equivalence theorem and its colorful consequences. Section~\ref{sec:Bukh} gives the polynomial-size version of Bukh's construction. Section~\ref{sec:separable} proves the Tverberg bound under $S_3$-separability and discusses the examples and relations above, and Section~\ref{sec:discussion} records the principal open directions.
\section{The bipartite graph model, and duality}\label{sec:bipartite}

For families $\F,\G \subseteq 2^X$ of subsets of a base set $X$, let $\bigcap \F = \bigcap_{C\in \F} C$ and $\G^\cap = \{\bigcap \F: \F \subseteq \G\}$.
We also use the convention $\bigcap \varnothing = X$, whenever the base set is clear from context.

For a convexity space $(X,\C)$, we call a subfamily $\G\subseteq\C$ a generator family if $\G^\cap=\C$. Let $I(X,\G)$ be the bipartite incidence graph with vertex classes $X$ and $\G$, where $x$ is adjacent to $C$ exactly when $x\in C$. We call $I(X,\G)$ a bipartite model of the space. Different generator families may give different models. Conversely, every bipartite graph determines an intersection-closed set system on either vertex class; in the finite case this is automatically a convexity space, while in the infinite case closure under unions of chains must be checked separately.

A basis $\B$ is an inclusion-wise minimal generator family.

\begin{claim}\label{cl:uniqueBasis}
    Every finite convexity space has a unique basis.
\end{claim}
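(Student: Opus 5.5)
The plan is to identify the basis explicitly as the set of \emph{meet-irreducible} convex sets and then show that every generator family contains it and that it is itself a generator. Call $C\in\C$ with $C\neq X$ meet-irreducible if $C$ is not the intersection of convex sets strictly containing it; equivalently, since $\C$ is finite, the family $\{D\in\C: D\supsetneq C\}$ has intersection strictly larger than $C$. We exclude $X$ because $X=\bigcap\varnothing$ is generated by the empty subfamily. Let $\mathcal{M}$ denote the family of all meet-irreducible members of $\C$. We will show that $\mathcal{M}$ is the unique basis.

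First, we show that $\mathcal{M}$ is contained in every generator family $\G$. Let $C\in\mathcal{M}$ and write $C=\bigcap\F$ with $\F\subseteq\G$, which is possible because $\G^\cap=\C$. Every member of $\F$ contains $C$. If none of them equalled $C$, all would be strict supersets of $C$, and then $C$ would be an intersection of strict supersets, contradicting irreducibility. Hence $C\in\F\subseteq\G$.

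Second, we show that $\mathcal{M}$ generates $\C$. We argue by downward induction on $|C|$, which is legitimate because $X$ is finite, and hence so is $\C$. The base case $C=X$ is $\bigcap\varnothing$. For a general $C\neq X$, either $C\in\mathcal{M}$, or $C=\bigcap\{D\in\C:D\supsetneq C\}$. In the second case each such $D$ is larger than $C$, so by induction each $D$ is an intersection of members of $\mathcal{M}$, and therefore so is $C$. This gives $\mathcal{M}^\cap=\C$.

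Combining the two steps, any basis $\B$ is a generator family, so $\mathcal{M}\subseteq\B$. Since $\mathcal{M}$ is itself a generator family and $\B$ is inclusion-minimal, we get $\B=\mathcal{M}$. In the other direction, $\mathcal{M}$ is minimal, because any generator family contained in $\mathcal{M}$ must also contain $\mathcal{M}$. So a basis exists and it is unique.

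The only delicate point is handling $X$ and the convention $\bigcap\varnothing=X$ consistently, so that $X$ is never forced into a basis. Finiteness enters in the induction and in the definition of irreducibility. In the infinite case an intersection of strict supersets can fail to be witnessed at any single stage, which is why the claim is restricted to finite spaces.
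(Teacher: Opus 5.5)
Your proof is correct, but it takes a different route from the paper's. The paper gets existence directly from finiteness: $\C$ itself is a finite generator family, so it contains an inclusion-minimal one. It then proves uniqueness by contradiction. If $K\in\B_1\setminus\B_2$, write $K$ as an intersection of members of $\B_2$, each of which strictly contains $K$. Each of those is in turn an intersection of members of $\B_1$, and these cannot include $K$, since $K$ does not contain them. Hence $\B_1\setminus\{K\}$ still generates $\C$, contradicting the minimality of $\B_1$.

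You instead identify the basis intrinsically as the family $\mathcal M$ of meet-irreducible proper convex sets. You show that $\mathcal M$ lies in every generator family, and by downward induction on $|C|$ that $\mathcal M$ generates $\C$.

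What your route buys is an explicit description of the basis. It also shows directly that the basis is contained in every generator family, which is convenient when computing minimal models, for instance for the dual space. The paper's argument is shorter and avoids the induction, but it is non-constructive: it gives uniqueness without saying what the basis is.

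In both arguments the finiteness hypothesis is really used only for existence. The paper's uniqueness step, like your Step 1, does not depend on it in substance; in your proof, finiteness enters only in Step 2. A small side remark: the equivalence in your definition of meet-irreducibility does not need $\C$ to be finite, since $\C$ is closed under arbitrary intersections.
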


\begin{proof}
    Let $(X,\C)$ be a finite convexity space. As $\C$ is a finite generator family, it has a minimal subfamily which is a basis.

    Suppose for contradiction that we have two bases $\B_1$ and $\B_2$, and let $K \in \B_1 \setminus \B_2$.  Let $\B_2' \subseteq \B_2$ be a subfamily with $\bigcap \B_2' = K$.  Let $\B_1' \subseteq \B_1$ be a minimal subfamily such that $\B_2' \subseteq \B_1'^\cap$. Then $K \in \B_1'^\cap$, but $\B_1' \subseteq \B_1 \setminus \{K\}$ as every member of $\B_2'$ strictly contains $K$. This contradicts the minimality of $\B_1$.
\end{proof}

We call $I(X,\B)$ the \emph{minimal} bipartite graph model of a finite convexity space.

\subsection{Duality of convexity spaces}\label{subsec:bipartite:duality}

Given a finite convexity space $(X,\C)$ with basis $\B$, let $X^\top$ be a labeled copy of $\B$, writing $g(B)\in X^\top$ for the copy of $B\in\B$. For each $x\in X$, introduce a labeled generator
\[
G_x^\top=\{g(B):B\in\B,\ x\in B\}\subseteq X^\top,
\]
and let $\C^\top$ consist of all intersections of the indexed family $\G^\top=(G_x^\top)_{x\in X}$. We call $(X^\top,\C^\top)$ the dual convexity space. Keeping the generators indexed is important when two points have the same neighborhood. Its bipartite model is obtained from $I(X,\B)$ by interchanging the two vertex classes.

As an example, if $X$ is the point set of a finite projective space and $\B$ is its family of hyperplanes, then $\C$ is the family of projective subspaces. The relation $x \in B \iff g(B) \in G_x^\top$ is exactly the usual point--hyperplane incidence relation under projective duality.

In the bipartite graph model, convex hulls of points and intersections of convex sets both correspond to certain common neighborhoods. For a graph $(V,E)$ and a subset $S \subseteq V$ of the vertices, let $N(S) = \{v \in V : \forall s \in S: \{v,s\} \in E\}$ be the set of common neighbors of $S$. For a convexity space $(X,\C)$ with generator set $\G$, we have $\bigcap \F = N(\F)$ for any $\F \subseteq \G$ and $\conv(Y) = N(N(Y))$ for any $Y \subseteq X$, where the neighborhoods are with respect to the bipartite graph model. We make the convention that for $Y \subseteq X$, $N(N(Y)) = X$ if $N(Y) = \varnothing$.

For future use, we mention the following property of the common neighborhood operation.
\begin{claim}\label{claim:NNN=N}
    For every $A \subseteq X$, we have $N(N(N(A))) = N(A)$.
\end{claim}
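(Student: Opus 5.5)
This is the standard Galois-connection argument for the common-neighbourhood operator of a bipartite graph. We use only two elementary properties of $N$, both immediate from its definition.

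First, $N$ is inclusion-reversing: if $S\subseteq T$, then every common neighbour of $T$ is a common neighbour of $S$, so $N(T)\subseteq N(S)$.

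Second, $N$ is extensive after two applications: $S\subseteq N(N(S))$. Indeed, each $s\in S$ is adjacent to every vertex of $N(S)$, so $s$ is a common neighbour of $N(S)$. When $N(S)=\varnothing$, this agrees with the stated convention $N(N(S))=X$, since then every vertex of $X$ vacuously qualifies. Here $A\subseteq X$, so $N(A)\subseteq\G$ and $N(N(A))\subseteq X$; the same argument applies on either vertex class. Thus with $S=N(A)\subseteq\G$, the double neighbourhood is taken back in $\G$, with $N(\varnothing)=\G$ on that side.

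With these two facts the claim follows from two inclusions.

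1. Apply extensivity to $S=N(A)$. This gives $N(A)\subseteq N(N(N(A)))$.

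2. Extensivity applied to $A$ gives $A\subseteq N(N(A))$. Applying the order-reversing property yields $N(N(N(A)))\subseteq N(A)$.

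Combining the two inclusions gives equality.

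The only step requiring care is the degenerate case where some neighbourhood is empty. We must check that the convention $N(\varnothing)=$ the whole opposite class is exactly the vacuous-truth reading of the definition of $N$. Once that is confirmed, both properties hold without exception, and no case split is needed.
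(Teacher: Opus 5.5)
Your proof is correct and follows essentially the same argument as the paper: you apply $S\subseteq N(N(S))$ to $S=N(A)$ for one inclusion, and you apply order reversal to $A\subseteq N(N(A))$ for the other. The paper justifies the first step by citing $A\subseteq\conv(A)=N(N(A))$, which literally concerns subsets of $X$. Your direct adjacency check of extensivity on the generator side, including the empty-set convention, makes explicit a point the paper leaves implicit.
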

\begin{proof}[Proof of Claim~\ref{claim:NNN=N}]
    Recall that $A \subseteq \conv(A) = N(N(A))$.
    Applying this to $N(A)$, we get $N(A) \subseteq N(N(N(A)))$.

    On the other hand, note that $A \subseteq B \subseteq X$ implies $N(A) \supseteq N(B)$.
    Therefore $N(A) \supseteq N(N(N(A)))$.
    \renewcommand\qedsymbol{$\blacksquare$}
\end{proof}

We say two convexity spaces $(X_1, \C_1)$ and $(X_2, \C_2)$ are \emph{isomorphic} if there exists a bijection $f: X_1 \to X_2$ such that $C \in \C_1$ if and only if $f(C) \in \C_2$.
The transposed model need not be minimal, so double dualization may identify redundant generators. The exact criterion is the following.

\begin{claim}\label{cl:doubledual}
We have $((X^\top)^\top,(\C^\top)^\top)\simeq(X,\C)$ if and only if there do not exist $p\in X$ and $Y\subseteq X\setminus\{p\}$ with
\[
\conv(\{p\})=\conv(Y).
\]
\end{claim}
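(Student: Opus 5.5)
The plan is to compute the double dual explicitly in terms of the minimal model $I(X,\B)$ and to compare point sets by cardinality, using that everything is finite.

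\emph{Step 1: reformulate the condition.} In $I(X,\B)$ we have $G_x^\top=N(\{x\})$, viewed as a subset of $X^\top\cong\B$, and more generally $\bigcap_{y\in Y}G_y^\top=N(Y)$, with the convention $N(\varnothing)=\B$. I claim that $\conv(\{p\})=\conv(Y)$ holds if and only if $N(\{p\})=N(Y)$.
\begin{itemize}
\item If $N(\{p\})=N(Y)$, apply $N$ to both sides to get $\conv(\{p\})=\conv(Y)$.
\item If $\conv(\{p\})=\conv(Y)$, apply $N$ to both sides and use Claim~\ref{claim:NNN=N}.
\end{itemize}
So the condition of Claim~\ref{cl:doubledual} says the following. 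No labeled generator $G_p^\top$ of the dual equals an intersection of the generators $G_y^\top$ with $y\neq p$. This covers two cases: a duplicate, $G_p^\top=G_y^\top$ with $Y=\{y\}$, and an honest intersection, including the empty intersection $Y=\varnothing$.

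\emph{Step 2: identify the basis of $\C^\top$.} In a finite convexity space, a set $K$ lies in the unique basis of Claim~\ref{cl:uniqueBasis} exactly when $K$ is meet-irreducible, i.e. $K$ is not the intersection of convex sets strictly containing it. Such sets must lie in every generator family. Conversely, a meet-reducible set can be dropped from any generator family. Hence the basis $\B^\top$ of $\C^\top$ is the set of distinct sets $G_x^\top$ that are not intersections of strictly larger $G_y^\top$'s.

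\emph{Step 3, the direction assuming the condition fails.} Take $p$ and $Y$ with $G_p^\top=\bigcap_{y\in Y}G_y^\top$. Either some $G_y^\top$ with $y\in Y$ equals $G_p^\top$, so two indices give the same set, or all of them strictly contain $G_p^\top$, so $G_p^\top$ is meet-reducible. In both cases $|\B^\top|<|X|$. The point set $(X^\top)^\top$ is a copy of $\B^\top$, so there is no bijection onto $X$ and the spaces are not isomorphic.

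\emph{Step 4, the direction assuming the condition holds.} Now all $G_x^\top$ are distinct and meet-irreducible, so $\B^\top=\{G_x^\top:x\in X\}$. Define the bijection $f\colon X\to(X^\top)^\top$ by sending $x$ to the copy of $G_x^\top$. The double-dual generator indexed by $g(B)\in X^\top$ is
\[
\{\,f(x): g(B)\in G_x^\top\,\}=\{\,f(x): x\in B\,\}=f(B).
\]
So $f$ carries the generators $\B$ to the generators of $(\C^\top)^\top$. Since both convexities are the families of intersections of their generators, and $f$ commutes with intersections (with $\bigcap\varnothing$ equal to the whole space), $f$ maps $\C$ onto $(\C^\top)^\top$.

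\emph{Main obstacle.} The delicate point is Step 2 together with the bookkeeping of indexed versus unindexed families. The dual generators are indexed, but the basis of $\C^\top$ is a plain family of sets. Coincidences $G_x^\top=G_y^\top$ and the empty-intersection case $N(\{p\})=\B$ therefore have to be matched correctly with the condition ``$Y\subseteq X\setminus\{p\}$'', and Step 1 is designed to absorb exactly these cases.
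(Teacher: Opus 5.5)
Your proof is correct and follows the same route as the paper. Both arguments reduce isomorphism of the double dual to minimality of the transposed family $(G_x^\top)_{x\in X}$, and then translate redundancy of $G_p^\top$ into $N(\{p\})=N(Y)$, hence $\conv(\{p\})=\conv(Y)$, via Claim~\ref{claim:NNN=N}. Your meet-irreducibility description of the basis, the count $|\B^\top|<|X|$, and the explicit map $f$ with $f(B)=G^{\top\top}_{g(B)}$ make rigorous the paper's one-line assertion that the double dual is isomorphic to $(X,\C)$ exactly when that family is minimal.
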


\begin{proof}
The double dual is isomorphic to $(X,\C)$ exactly when the transposed generator family $\G^\top=(G_x^\top)_{x\in X}$ is minimal. The generator indexed by $p$ is redundant precisely when
\[
G_p^\top=\bigcap_{y\in Y}G_y^\top
\]
for some $Y\subseteq X\setminus\{p\}$. In the incidence graph this is equivalent to $N(p)=N(Y)$. By Claim~\ref{claim:NNN=N}, this holds exactly when $N(N(p))=N(N(Y))$, that is, when $\conv(\{p\})=\conv(Y)$.
\end{proof}

In particular, the double dual is isomorphic to the original space whenever the space is point-convex, meaning that every singleton is convex.

\subsection{A generator-relative dual Radon number}\label{subsec:bipartite:dualRadon}
The transpose of a bipartite model depends on the chosen generator, so an ordinary dual Radon parameter is naturally attached to the pair $(\C,\G)$ rather than to $\C$ alone.

\begin{definition}
Let $\G$ be a generator family of proper convex sets. The \emph{dual Radon number} $r^\top_{\G}$ is the least integer $m$ such that every $m$-element family $\F\subseteq\G$ has a nontrivial partition $\F=\F_1\mathbin{\dot\cup}\F_2$ satisfying
\[
\conv\left(\bigcap\F_1\cup\bigcap\F_2\right)\ne X.
\]
\end{definition}

If $(X,\C)$ is finite and $\G=\B$ is its basis, then $r^\top_{\B}$ is exactly the Radon number of the dual convexity space. In the transposed incidence model, the displayed condition is equivalent to
\[
\conv_{\C^\top}(\F_1)\cap\conv_{\C^\top}(\F_2)\ne\varnothing.
\]

For Euclidean convexity, let $\mathcal H_d$ be the family of all proper affine halfspaces, open or closed. This family generates $\cvx(\Re^d)$.

\begin{claim}
The dual Radon number of the halfspace model of Euclidean convexity is
\[
r^\top_{\mathcal H_d}=d+2.
\]
\end{claim}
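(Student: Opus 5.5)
We must show two things: every family of $d+2$ halfspaces admits a good partition, and some family of $d+1$ halfspaces admits none. The starting observation is that for a nontrivial partition $\F=\F_1\mathbin{\dot\cup}\F_2$, the condition $\conv(\bigcap\F_1\cup\bigcap\F_2)\ne X$ means exactly that $\bigcap\F_1\cup\bigcap\F_2$ is contained in some proper halfspace. Call such a partition \emph{good}. We can assume both parts are nonempty, and note that if one intersection is empty the hull is just the other intersection, which is proper since it lies in a proper halfspace.

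\textbf{Upper bound $r^\top_{\mathcal H_d}\le d+2$.} Write each halfspace as $H_i=\{x: a_i\cdot x\ \square_i\ b_i\}$ with $a_i\ne0$ and $\square_i\in\{\le,<\}$, and take $d+2$ of them. The vectors $(a_i,-b_i)\in\Re^{d+1}$ are linearly dependent, so there is a nontrivial relation $\sum_i\lambda_i(a_i,-b_i)=0$. Split the indices by the sign of $\lambda_i$: put $\F_1=\{H_i:\lambda_i>0\}$, and put $\F_2$ equal to the rest, with the $\lambda_i=0$ indices placed to keep both parts nonempty. The case where all nonzero $\lambda_i$ have one sign needs separate treatment. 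If all $\lambda_i\ge 0$, then $\sum\lambda_i a_i=0$ and $\sum \lambda_i b_i=0$, and summing the defining inequalities shows that the halfspaces with $\lambda_i>0$ have empty intersection, or meet only in a closed hyperplane-type degenerate set. Then we take $\F_1$ to be those halfspaces, so $\bigcap\F_1=\varnothing$, and the hull reduces to $\bigcap\F_2$. This intersection is proper provided $\F_2\ne\varnothing$. If $\lambda_i>0$ for all $i$, we move one halfspace to the other side and the remaining ones still intersect to something proper. In the mixed-sign case, define $c=\sum_{\lambda_i>0}\lambda_i a_i=-\sum_{\lambda_i<0}\lambda_i a_i$ and $\beta=\sum_{\lambda_i>0}\lambda_i b_i=-\sum_{\lambda_i<0}\lambda_ib_i$. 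Then $\bigcap\F_1\subseteq\{c\cdot x\le\beta\}$, while $\bigcap\F_2\subseteq\{c\cdot x\ge\beta\}$ after flipping signs. This is the wrong direction, so instead I would pair $\F_1$ with the opposite inequality and check the bookkeeping. If $c\ne0$, both intersections lie in the closed halfspace produced by the correct orientation. If $c=0$, the inequalities force one of the intersections to be empty, and we conclude as before. \emph{Getting these sign conventions right is the main obstacle.} Concretely, I expect the correct statement to be a Farkas/Gordan-type dichotomy: $\bigcap\F_1\cup\bigcap\F_2\subseteq$ some halfspace $\{c\cdot x\le\beta\}$ if and only if $(c,-\beta)$ lies in the conic hull of the normals of $\F_1$ (when $\bigcap\F_1\neq\varnothing$) and also in that of $\F_2$. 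So the plan is to reduce goodness to the intersection of two cones in $\Re^{d+1}$, plus the homogenizing vector $(0,1)$ that accounts for emptiness. Radon's theorem for $d+2$ vectors in the corresponding projective/linear setting then gives the partition.

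\textbf{Lower bound: a bad family of $d+1$ halfspaces.} Take the $d+1$ closed facet halfspaces of a simplex, $H_i=\{x:\ell_i(x)\ge0\}$, where the barycentric coordinates satisfy $\sum\ell_i\equiv1$. For any nontrivial partition with index sets $I\cup J$, the set $\bigcap_{i\in I}H_i$ contains all points with $\ell_i\ge0$ for $i\in I$, and similarly for $J$. We show the union of these two cones has convex hull $\Re^d$. Any point $x$ can be written as a convex combination of a point $y$ with $\ell_I(y)$ large and $\ell_J(y)$ very negative, and a point $z$ with the roles reversed. This is possible because the affine map $x\mapsto(\ell_i(x))_{i}$ is a bijection onto the hyperplane $\{\sum=1\}$, and in that hyperplane we can freely choose coordinates, adjusting the sum with coordinates on the side not constrained by sign. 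Since each cone is unbounded in all needed directions, their hull is everything, so no partition is good and $r^\top\ge d+2$.

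The main obstacle is the upper bound's case analysis, in particular handling empty intersections and open versus closed halfspaces. The cone-intersection reformulation is the step I would nail down first.
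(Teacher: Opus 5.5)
Your lower bound is correct: it is the paper's simplex example, checked in barycentric coordinates instead of via recession cones. The upper bound, however, has a gap that cannot be closed, because $r^\top_{\mathcal H_d}\le d+2$ is false for every $d\ge2$.

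Take the $2d$ facet halfspaces $\{x_i\le1\}$ and $\{x_i\ge-1\}$ of the cube $[-1,1]^d$, and any nontrivial partition $\F_1\mathbin{\dot\cup}\F_2$. The halfspace $\{x_i\le1\}$ lies in exactly one part, and every member of the other part contains the ray $\{te_i:t\ge0\}$. Similarly, one of the two intersections contains the ray $\{-te_i:t\ge0\}$. So $\bigcap\F_1\cup\bigcap\F_2$ contains every coordinate axis, its convex hull is $\Re^d$, and no partition is good. Hence $r^\top_{\mathcal H_d}\ge 2d+1$; already four halfspaces in the plane give a counterexample.

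In your lifted picture with $d=2$, the only linear relation among $(\pm e_1,-1),(\pm e_2,-1)$ is
$(e_1,-1)+(-e_1,-1)-(e_2,-1)-(-e_2,-1)=0$.
It has $c=0$ and $\beta=2>0$. This is exactly the case you asserted forces an empty intersection, yet both strips $\{|x_1|\le1\}$ and $\{|x_2|\le1\}$ are nonempty. Your cone reformulation is right: when both intersections are nonempty, a partition is good if and only if the normal cones of the two parts share a nonzero vector. But a linear dependence among $d+2$ vectors does not produce a \emph{nonzero} common vector. For $\pm e_1,\pm e_2$, every complementary pair of cones meets only at $0$.

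The paper's own proof has the same defect. Its upper bound invokes a ``conic Radon theorem'' asserting that any $d+2$ normals split into two parts whose cones share a nonzero vector, and $\pm e_1,\pm e_2$ refutes this. What can actually be proved is $r^\top_{\mathcal H_d}=2d+1$, which agrees with $d+2$ only when $d=1$. Here is a sketch.
\begin{itemize}
\item Given $2d+1$ halfspaces, if some nonempty proper subfamily has empty intersection, split it off; the hull is then the other intersection, which is proper. By Helly, this also covers the case of an empty total intersection.
\item Otherwise all intersections are nonempty. If no partition were good, then for every linear dependence $\lambda$ of the normals the vector $u=\sum_{\lambda_i>0}\lambda_ia_i$ would vanish. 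Indeed, if $u\ne0$, the sign split is good, since $u\cdot x$ is bounded above on both intersections.
\item Thus the space of dependences is a linear subspace of $\Re^m$ closed under $\lambda\mapsto\lambda^+$. It is therefore spanned by nonnegative vectors with pairwise disjoint supports, each support of size at least two because $a_i\ne0$.
\item This gives $m-d\le\dim\le m/2$, that is, $m\le 2d$.
\end{itemize}
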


\begin{proof}
Write $H_i=\{x:\langle a_i,x\rangle\mathrel{\triangleleft_i}b_i\}$, where $a_i\ne0$ and $\triangleleft_i$ is either $<$ or $\le$. By the conic Radon theorem, any $d+2$ normals admit a nontrivial partition $I\mathbin{\dot\cup}J$ for which
\[
\operatorname{cone}\{a_i:i\in I\}\cap
\operatorname{cone}\{a_j:j\in J\}
\]
contains a nonzero vector $u$. If both $\bigcap_{i\in I}H_i$ and $\bigcap_{j\in J}H_j$ are nonempty, then the functional $x\mapsto\langle u,x\rangle$ is bounded above on each intersection, so their union lies in a proper halfspace. If one intersection is empty, the conclusion is immediate. This proves the upper bound.

For the lower bound, choose $d+1$ vectors $a_1,\ldots,a_{d+1}$ spanning $\Re^d$ whose unique linear dependence has all coefficients positive, and put $H_i=\{x:\langle a_i,x\rangle\le1\}$. For every nontrivial partition $I\mathbin{\dot\cup}J$, the cones generated by the two corresponding sets of normals meet only at the origin. By the recession-cone form of Farkas' lemma, no nonzero linear functional is bounded above on both $\bigcap_{i\in I}H_i$ and $\bigcap_{j\in J}H_j$. Hence the convex hull of their union is all of $\Re^d$, so these $d+1$ halfspaces have no dual Radon partition.
\end{proof}

Unlike the strong parameters studied below, the ordinary Radon number is not controlled by the basis-dual parameter.

\begin{claim}\label{cl:noDualRadonBound}
There is no function $f$ such that $r(\C)\le f(r^\top_{\B})$ for every finite convexity space with basis $\B$.
\end{claim}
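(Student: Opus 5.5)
The plan is to exhibit an explicit family of finite convexity spaces in which the Radon number grows without bound while $r^\top_{\B}$ stays equal to $2$. The idea is to start from the free convexity on $[n]$, whose Radon number is $n+1$, and add one extra point $z$ that lies in no proper convex set. Then every proper convex set lies inside the single proper convex set $[n]$. Consequently the union of any two basis members stays inside a proper convex set, so every pair of basis members is a dual Radon pair.

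\textbf{Construction.} For $n\ge2$ let $X=[n]\cup\{z\}$ and let $\C=2^{[n]}\cup\{X\}$. This family contains $\varnothing$ and $X$ and is closed under intersections. Since $X$ is finite, closure under unions of chains is automatic, so $(X,\C)$ is a finite convexity space.

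\textbf{Identifying the basis.} I claim that
\[
\B=\{[n]\}\cup\{[n]\setminus\{x\}:x\in[n]\}.
\]
\begin{enumerate}
\item $\B$ generates $\C$. Every $A\subseteq[n]$ equals $[n]\cap\bigcap_{x\notin A}([n]\setminus\{x\})$, and $X$ is the empty intersection.
\item $\B$ is minimal. Each $[n]\setminus\{x\}$ is meet-irreducible in $\C$: the only convex sets strictly containing it are $[n]$ and $X$, and any intersection of these contains $[n]$. Likewise $[n]$ is not an intersection of other members of $\C$, since the only convex set strictly containing it is $X$.
\end{enumerate}
By Claim~\ref{cl:uniqueBasis}, $\B$ is therefore the basis. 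All members of $\B$ are proper, as the definition of $r^\top_\B$ requires.

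\textbf{Computing both parameters.}
\begin{enumerate}
\item Radon number. Take $S=[n]$ and any nontrivial partition $S=S_1\mathbin{\dot\cup}S_2$. Each $S_i$ is itself convex, so $\conv(S_1)\cap\conv(S_2)=S_1\cap S_2=\varnothing$. Hence no $n$-point set of this kind is Radon, and $r(\C)\ge n+1$.
\item Dual Radon number. Take any $2$-element family $\{B_1,B_2\}\subseteq\B$ with the partition $\F_1=\{B_1\}$, $\F_2=\{B_2\}$. Then $B_1\cup B_2\subseteq[n]$, so
\[
\conv(B_1\cup B_2)\subseteq[n]\ne X.
\]
Thus $r^\top_\B\le2$. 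Note that a $1$-element family admits no nontrivial partition, so in fact $r^\top_\B=2$.
\end{enumerate}

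Since $r^\top_\B=2$ for every $n$ while $r(\C)\ge n+1$, no function $f$ can satisfy $r(\C)\le f(r^\top_\B)$ for all finite convexity spaces.

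The only step needing real care is the minimality of $\B$. If $[n]$ were omitted, the family would no longer generate $\C$, because $[n]$ is not an intersection of the sets $[n]\setminus\{x\}$. With $[n]$ included, the verification above is elementary.
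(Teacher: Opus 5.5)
Your proof is correct, and it takes a different route from the paper's.

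The paper uses a direct-sum operation $(X_1\sqcup X_2,\C_1\cup\C_2\cup\{X_1\sqcup X_2\})$. Its basis is $\B_1\cup\B_2\cup\{X_1,X_2\}$, and it has Radon number $\max\{r(\C_1),r(\C_2)\}$. The pair $\{X_1,X_2\}$ has union $X$, so it admits no dual Radon partition, whereas any three basis elements do. This gives $r^\top_\B=3$, and one then lets one component have arbitrarily large Radon number.

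You instead adjoin to the free convexity $([n],2^{[n]})$ a point $z$ that lies in no proper convex set. This makes $[n]$ a basis element containing all the others, so every pair of basis elements has union inside the proper convex set $[n]$, giving $r^\top_\B=2$.

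What each approach buys:
\begin{itemize}
\item Your example is fully explicit and attains the smallest possible value of the dual Radon number. The same one-point adjunction works over any finite base space, not just $([n],2^{[n]})$.
\item The paper's construction is a general operation, and it is what underlies the paper's remark in the Discussion that the obstruction is ``decomposable.''
\item Your space is not of the paper's direct-sum form, since $\{z\}$ is not convex there. So it exhibits a second kind of degeneracy: a basis element dominating all others, or equivalently, in the dual, a point lying in every nonempty convex set.
\end{itemize}
Any irreducibility hypothesis intended to relate $r$ and $r^\top_\B$ would therefore also have to exclude this kind of degeneracy.
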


\begin{proof}
For finite convexity spaces $(X_1,\C_1)$ and $(X_2,\C_2)$, define
\[
(X_1,\C_1)\sqcup(X_2,\C_2)
=\bigl(X_1\sqcup X_2,\,\C_1\cup\C_2\cup\{X_1\sqcup X_2\}\bigr).
\]
Its Radon number is $\max\{r(\C_1),r(\C_2)\}$. Its basis consists of the two original bases together with $X_1$ and $X_2$. The pair $\{X_1,X_2\}$ has no dual Radon partition, whereas every three basis elements do: two belong to the same component, and grouping the third with one of them makes one of the two intersections empty or keeps both intersections in that component. Thus the basis-dual Radon number is $3$. Taking one component with arbitrarily large Radon number proves the claim.
\end{proof}

\section{Equivalence of strong invariants}\label{sec:strong}
In this section we state several invariants of convexity spaces which turn out to be equivalent, thus characterizing a subclass of convexity spaces.
These invariants are all unbounded for the Euclidean space, preserved under duality, and most of them are strengthenings of the classical invariants. Some of the implications between the items below already explicitly or implicitly appear in literature in various guises; we list them in the next subsection.

\begin{definition}
    A subset $Y\subseteq X$ of points in a convexity space $(X,\C)$ is in convex position, if for all $y \in Y$ we have $y \not\in \conv(Y\setminus\{y\})$.
\end{definition}

\begin{definition}
Let $\F\subseteq2^X$ be a set system. Its \emph{comatching number} is the largest $m$ for which there are distinct points $p_1,\ldots,p_m\in X$ and sets $C_1,\ldots,C_m\in\F$ such that
\[
p_i\in C_j\quad\Longleftrightarrow\quad i\ne j.
\]
Equivalently, the incidence graph of $(X,\F)$ contains the bipartite complement of a matching of size $m$ as an induced subgraph.
\end{definition}

\begin{observation}\label{obs:generator-comatching}
If $\G$ generates $\C$, then $\G$ and $\C$ have the same comatching number.
\end{observation}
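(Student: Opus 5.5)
The statement has two inequalities, and I would prove each directly.

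\emph{Easy direction.} Since $\G\subseteq\C$, every comatching configuration $p_1,\ldots,p_m$, $C_1,\ldots,C_m$ with all $C_j\in\G$ is also a comatching configuration in $\C$. Hence the comatching number of $\G$ is at most that of $\C$.

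\emph{Reverse direction.} Start from a comatching in $\C$: distinct points $p_1,\ldots,p_m$ and convex sets $C_1,\ldots,C_m\in\C$ with $p_i\in C_j$ exactly when $i\ne j$. The plan is to replace each $C_j$ by a single generator, without changing the points.

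Fix $j$. Because $\G^\cap=\C$, write $C_j=\bigcap\F_j$ for some $\F_j\subseteq\G$. We have $p_j\notin C_j$, and $\F_j$ cannot be empty, since otherwise $C_j=X\ni p_j$. So some $G_j\in\F_j$ satisfies $p_j\notin G_j$. Also $G_j\supseteq C_j$, so $p_i\in G_j$ for every $i\ne j$.

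Thus $p_i\in G_j$ holds exactly when $i\ne j$. The sets $G_j$ are automatically distinct, since $G_j$ is the only one of them omitting $p_j$. Therefore $p_1,\ldots,p_m$ together with $G_1,\ldots,G_m\in\G$ is a comatching of size $m$ in $\G$.

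Applying this to a largest comatching in $\C$ gives the reverse inequality, so the two comatching numbers coincide. The argument works verbatim if the comatching numbers are infinite or unbounded, since it transfers configurations size by size.

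The only point needing care is the degenerate case $\F_j=\varnothing$. This is excluded by the convention $\bigcap\varnothing=X$ together with $p_j\notin C_j$. Nothing else looks like an obstacle.
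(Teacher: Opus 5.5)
Your proof is correct and follows the same route as the paper. The inclusion $\G\subseteq\C$ gives one inequality; for the other, in a generator representation of each $C_j$ you pick a generator excluding $p_j$, which still contains every $p_i$ with $i\ne j$, and your remarks on the empty representation and on the distinctness of the $G_j$ just spell out details the paper leaves implicit.
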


\begin{proof}
Only one direction requires proof. Suppose that $p_1,\ldots,p_m$ and $C_1,\ldots,C_m\in\C$ form a comatching. Write each $C_i$ as an intersection of generators. Since $p_i\notin C_i$, some generator $G_i$ in that representation excludes $p_i$; every $p_j$ with $j\ne i$ lies in $C_i$ and hence in $G_i$. Thus the points $p_i$ and generators $G_i$ form a comatching in $\G$.
\end{proof}

\begin{thm}\label{thm:strongEquivalences}
Let $(X,\C)$ be a convexity space with generator family $\G\subseteq\C$. The following are equivalent.
\begin{enumerate}
    \item\label{prop:VCofC} $\VC(\C)\le d$.
    \item\label{prop:rank} Every subset of $X$ in convex position has size at most $d$.
    \item\label{prop:comatching} The comatching number of $\C$ is at most $d$.
    \item\label{prop:strongHellyC} Every finite $\F\subseteq\C$ has a subfamily $\F'\subseteq\F$ of size at most $d$ with $\bigcap\F'=\bigcap\F$.
    \item\label{prop:strongHellyB} Every finite $\F\subseteq\G$ has a subfamily $\F'\subseteq\F$ of size at most $d$ with $\bigcap\F'=\bigcap\F$.
    \item\label{prop:strongRadon} Every $(d+1)$-element set $S\subseteq X$ has a nontrivial partition $S=S_1\mathbin{\dot\cup}S_2$ such that $\conv(S_1)\subseteq\conv(S_2)$.
    \item\label{prop:strongCaratheodory} Every finite $S\subseteq X$ has a subset $S'\subseteq S$ of size at most $d$ with $\conv(S')=\conv(S)$.
    \item\label{prop:layerStructureX} Every finite $Y\subseteq X$ has a partition into nonempty blocks $Y=Y_1\mathbin{\dot\cup}\cdots\mathbin{\dot\cup}Y_t$, each of size at most $d$, such that
    \[
    \conv(Y_1)\subseteq\cdots\subseteq\conv(Y_t).
    \]
    \item\label{prop:layerStructureB} Every finite $\F\subseteq\G$ has a partition into nonempty blocks $\F=\F_1\mathbin{\dot\cup}\cdots\mathbin{\dot\cup}\F_t$, each of size at most $d$, such that
    \[
    \bigcap\F_1\subseteq\cdots\subseteq\bigcap\F_t.
    \]
    \item\label{prop:layerStructureC} The same statement as in \ref{prop:layerStructureB} holds for every finite $\F\subseteq\C$.
\end{enumerate}
\end{thm}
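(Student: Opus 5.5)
The plan is to route every item through the comatching configuration, using convex position as the intermediate notion. First I would prove (\ref{prop:VCofC})$\Leftrightarrow$(\ref{prop:rank})$\Leftrightarrow$(\ref{prop:comatching}); these need only finite configurations.

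If $Y=\{p_1,\dots,p_m\}$ is in convex position, put $C_i=\conv(Y\setminus\{p_i\})$. This gives a comatching of size $m$ in $\C$. Moreover $Y$ is shattered by $\C$: for $Z\subseteq Y$ and $y\in Y\setminus Z$ we have $\conv(Z)\subseteq\conv(Y\setminus\{y\})\not\ni y$, so $\conv(Z)\cap Y=Z$.

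Conversely, a comatching $p_i,C_i$ in $\C$ gives convex position of $\{p_1,\dots,p_m\}$. Indeed $\conv(\{p_j:j\ne i\})\subseteq C_i\not\ni p_i$. A shattered set of size $m$ yields a comatching by choosing $C_i\in\C$ with trace $Y\setminus\{p_i\}$. Hence the maximal size of a shattered set, of a convex-position set, and of a comatching in $\C$ all coincide.

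Next I would treat the Helly-side statements (\ref{prop:strongHellyC}) and (\ref{prop:strongHellyB}) by a minimality argument. Let $\F$ be finite (in $\C$ or in $\G$), and let $\F'\subseteq\F$ be inclusion-minimal with $\bigcap\F'=\bigcap\F$. For each $C\in\F'$ pick $p_C\in\bigcap(\F'\setminus\{C\})\setminus C$. These points are distinct, and together with the sets $C$ they form a comatching in $\F'$. So $|\F'|$ is at most the comatching number, which equals that of $\C$ by Observation~\ref{obs:generator-comatching}.

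For the converse, a comatching $p_1,\dots,p_{d+1}$, $C_1,\dots,C_{d+1}$ violates strong Helly. Any $d$ of the sets miss some $C_j$, and their intersection contains $p_j\notin\bigcap_i C_i$. By Observation~\ref{obs:generator-comatching} the comatching can be taken inside $\G$, which handles (\ref{prop:strongHellyB}).

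The point-side statements are analogous. Let $S'\subseteq S$ be minimal with $\conv(S')=\conv(S)$. Then each $s\in S'$ satisfies $s\notin\conv(S'\setminus\{s\})$: otherwise $\conv(S'\setminus\{s\})=\conv(S')$, contradicting minimality. So $S'$ is in convex position and $|S'|\le d$, giving (\ref{prop:rank})$\Rightarrow$(\ref{prop:strongCaratheodory}). A convex-position set of size $d+1$ refutes (\ref{prop:strongCaratheodory}).

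For strong Radon (\ref{prop:strongRadon}), use the same dichotomy. If $|S|=d+1$, then $S$ is not in convex position, so some $s\in\conv(S\setminus\{s\})$, and the partition $\{s\}\mathbin{\dot\cup}(S\setminus\{s\})$ works. Conversely, suppose $S$ is in convex position and $\conv(S_1)\subseteq\conv(S_2)$ with $S_1\neq\varnothing$. Then any $s\in S_1$ lies in $\conv(S\setminus\{s\})$, a contradiction.

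The layered statements follow by recursion. Given finite $Y$, take $Y_t\subseteq Y$ with $|Y_t|\le d$ and $\conv(Y_t)=\conv(Y)$, and recurse on $Y\setminus Y_t$. Each earlier block then satisfies $\conv(Y_{i})\subseteq\conv(Y\setminus Y_t)\subseteq\conv(Y_t)$, and induction gives the chain. Conversely, the top block of a layering satisfies $\conv(Y_t)\supseteq\bigcup_i Y_i$, so $\conv(Y_t)=\conv(Y)$, which recovers (\ref{prop:strongCaratheodory}).

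Dually, for (\ref{prop:layerStructureB}) and (\ref{prop:layerStructureC}), take $\F_1$ with $\bigcap\F_1=\bigcap\F$ and recurse on $\F\setminus\F_1$. The bottom block of any layering has $\bigcap\F_1\subseteq\bigcap\F_i$ for every $i$, hence equals $\bigcap\F$, which recovers strong Helly.

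I do not expect a genuine obstacle. The step requiring the most care is the bookkeeping at the edges: distinctness of the chosen points $p_C$; the empty family and the convention $\bigcap\varnothing=X$; nonemptiness of blocks; and the fact that (\ref{prop:strongHellyB}) for an arbitrary generator $\G$ is tied back to $\C$ only through Observation~\ref{obs:generator-comatching}. No finiteness of $X$ is needed, since every argument uses finite configurations only.
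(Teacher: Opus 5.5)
Your proposal is correct and is essentially the paper's proof. Both route every item through convex position and the comatching configuration, use inclusion-minimal certificates (a minimal subfamily with the same intersection, a minimal subset with the same hull) to produce comatchings or convex-position sets, invoke Observation~\ref{obs:generator-comatching} to pass between $\C$ and $\G$, and obtain the layered statements by iterating the exact certificates. The differences are cosmetic: you close the cycle shattered $\Rightarrow$ comatching $\Rightarrow$ convex position $\Rightarrow$ shattered instead of proving the VC/convex-position equivalence directly, and you take a minimal hull-preserving subset where the paper greedily deletes redundant points.
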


\begin{proof}
A finite set $S$ is in convex position exactly when $\conv(A)\cap S=A$ for every $A\subseteq S$, which is equivalent to $S$ being shattered by $\C$. This proves \ref{prop:VCofC}$\Leftrightarrow$\ref{prop:rank}.

A comatching $p_1,\ldots,p_m;C_1,\ldots,C_m$ places the points in convex position. Conversely, if $p_1,\ldots,p_m$ are in convex position, then $C_i=\conv(\{p_j:j\ne i\})$ form a comatching. Hence \ref{prop:rank}$\Leftrightarrow$\ref{prop:comatching}.

For an arbitrary set system $\F$, its comatching number is at most $d$ exactly when every finite subfamily has an exact intersection certificate of size at most $d$. Indeed, an inclusion-minimal subfamily $C_1,\ldots,C_m$ with a prescribed intersection yields points
\[
p_i\in\bigcap_{j\ne i}C_j\setminus C_i,
\]
and hence a comatching of size $m$; conversely, the sets in a comatching form an inclusion-minimal intersection representation. Applying this to $\C$ and to $\G$, together with Observation~\ref{obs:generator-comatching}, proves the equivalence of \ref{prop:comatching}, \ref{prop:strongHellyC}, and \ref{prop:strongHellyB} without any finiteness assumption on the chosen generator representations.

If \ref{prop:rank} holds and $|S|=d+1$, some $p\in S$ lies in $\conv(S\setminus\{p\})$; the partition $\{p\}\mathbin{\dot\cup}(S\setminus\{p\})$ proves \ref{prop:strongRadon}. Conversely, a nontrivial partition with $\conv(S_1)\subseteq\conv(S_2)$ places every point of $S_1$ in the hull of the remaining points, so $S$ is not in convex position. Thus \ref{prop:rank}$\Leftrightarrow$\ref{prop:strongRadon}.

Under \ref{prop:rank}, repeatedly delete a point $p$ satisfying $p\in\conv(S\setminus\{p\})$. Each deletion preserves the hull, and the process ends with a set in convex position, hence with at most $d$ points. This proves \ref{prop:strongCaratheodory}. The converse follows because a set in convex position has no proper subset with the same hull.

Finally, repeatedly apply the appropriate exact Helly statement to the remaining family. The selected blocks have size at most $d$, and their intersections form an increasing chain. Repeatedly applying exact Carath\'eodory to the remaining point set gives blocks with decreasing hulls; reverse their order to obtain \ref{prop:layerStructureX}. Conversely, the first block in either Helly chain has the intersection of the whole family, and the last block in the Carath\'eodory chain has the hull of the whole set. This proves the three layered equivalences.
\end{proof}

\begin{rem}
    The finiteness assumptions on families and subsets are necessary for the equivalences to hold. As an example, consider $X = \mathbb{N}$ where convex sets are all initial segments $\{1, \dots, n\}$ along with $\varnothing$ and $\mathbb{N}$. The maximum size of a set in convex position is $d=1$ (it has rank 1). However, the infinite set $S = \mathbb{N}$ has $\conv(S) = \mathbb{N}$, but no finite subset $S'$ has $\conv(S') = \mathbb{N}$. This breaks Strong Carath\'eodory (Prop~\ref{prop:strongCaratheodory}) for infinite sets.
\end{rem}

If the space is finite, the strong invariant is preserved by duality.

\begin{thm}\label{thm:VC-dual}
The VC-dimension of a finite convexity space equals the VC-dimension of its dual.
\end{thm}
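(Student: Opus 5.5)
The plan is to reduce the statement to the comatching number, which is symmetric under transposing the bipartite model. By Theorem~\ref{thm:strongEquivalences}, items \ref{prop:VCofC} and \ref{prop:comatching}, the VC-dimension of any convexity space equals its comatching number, and by Observation~\ref{obs:generator-comatching} this may be computed on any generator family. So the comatching number of $(X,\C)$ is the comatching number of the incidence graph $I(X,\B)$, where $\B$ is the basis. For the dual space, $\G^\top=(G_x^\top)_{x\in X}$ generates $\C^\top$, so its comatching number is that of the transposed graph $I(\B,X)$, with the roles of points and sets interchanged.

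The core step is to check that an induced comatching in a bipartite graph is a symmetric notion. Suppose we have distinct points $p_1,\dots,p_m\in X$ and distinct sets $B_1,\dots,B_m\in\B$ with $p_i\in B_j$ exactly when $i\ne j$. In the transposed graph, the points $g(B_1),\dots,g(B_m)$ and the generators $G_{p_1}^\top,\dots,G_{p_m}^\top$ then satisfy $g(B_j)\in G_{p_i}^\top$ exactly when $i\ne j$. The reverse transfer works in the same way.

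Two points need care. First, distinctness: in a comatching the sets are automatically distinct when $m\ge2$, since $B_i$ misses $p_i$ while $B_j$ contains it. Likewise the points are distinct, since $p_i\in B_j$ but $p_j\notin B_j$. The $g(B_i)$ are then distinct points of $X^\top$. Because the indexed family $G^\top$ is used, distinct indices $p_i$ give distinct generators even if two of the $G_{p_i}^\top$ coincide as sets; in fact they cannot coincide here, for the same reason as above. Second, the small case $m=1$: a comatching of size $1$ is just a point outside a set. This exists in $\C$ iff $\C\neq\{X\}$, i.e.\ iff $\B$ contains a proper set $B$ avoiding some $x$. Then $g(B)\notin G_x^\top$, giving a size-$1$ comatching in the dual, and the converse is symmetric. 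With these checks the comatching numbers, and hence the VC-dimensions, coincide.

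The only real obstacle is bookkeeping. The dual generator family is indexed rather than a set, and Observation~\ref{obs:generator-comatching} must be applied to it. I would note that the observation's proof never uses that generators are distinct sets, so it applies verbatim to indexed families. Edge cases with an empty basis or $X=\varnothing$ give VC-dimension $0$ on both sides and would be dispatched directly.
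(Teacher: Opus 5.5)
Your proposal is correct and is essentially the paper's proof. You pass to comatching numbers via Theorem~\ref{thm:strongEquivalences} and Observation~\ref{obs:generator-comatching}, applied to both the basis $\B$ and the indexed dual generator family $\G^\top$, and then use that an induced bipartite complement of a matching survives interchanging the two vertex classes of the incidence graph; your additional bookkeeping (distinctness, indexed generators, and the degenerate cases) is sound and just makes explicit what the paper leaves implicit.
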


\begin{proof}
Let $\B$ be the basis of $(X,\C)$ and $\G^\top$ the transposed generator family. By Observation~\ref{obs:generator-comatching}, the comatching numbers of $\C$ and $\B$ agree, as do those of $\C^\top$ and $\G^\top$. The incidence graphs $I(X,\B)$ and $I(X^\top,\G^\top)$ differ only by interchanging their two vertex classes, so their largest induced bipartite complements of matchings have the same size. The result follows from Theorem~\ref{thm:strongEquivalences}.
\end{proof}

\subsection{Previous results and new examples}\label{subsec:strong:history}

Below we list previous results proving some of the implications in Theorem~\ref{thm:strongEquivalences} or boundedness of the strong invariants in different convexity spaces. The definitions of all the different spaces can be found in the corresponding papers.

\begin{enumerate}

\item Jamison calls the maximum size $d$ of a subset in convex position the \emph{rank} of the convexity space, proves in \cite{jamison1981partition} that (\ref{prop:rank}) implies (\ref{prop:layerStructureX}) in Theorem~\ref{thm:strongEquivalences}, and states the following weaker form.

\begin{thm}[Proposition 3 of \cite{jamison1981partition}]
    We have $r_t \leq d(t-1) + 1$.
\end{thm}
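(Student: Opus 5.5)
We prove $r_t\le d(t-1)+1$ from the layered decomposition of Theorem~\ref{thm:strongEquivalences}, part~\ref{prop:layerStructureX}, which holds because the space has rank $d$, i.e.\ part~\ref{prop:rank}. Let $Y\subseteq X$ with $|Y|\ge d(t-1)+1$; we may take $|Y|=d(t-1)+1$, since a Tverberg partition of a subset extends to $Y$ by adding the leftover points to any part. By part~\ref{prop:layerStructureX}, write
\[
Y=Y_1\mathbin{\dot\cup}\cdots\mathbin{\dot\cup}Y_s,\qquad \conv(Y_1)\subseteq\cdots\subseteq\conv(Y_s),
\]
with every block nonempty of size at most $d$.

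Since $\sum|Y_i|=d(t-1)+1$ and each $|Y_i|\le d$, we get $s\ge t$. Indeed, $s\le t-1$ would give $|Y|\le d(t-1)$.

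Define $t$ parts as $Y_1,\ldots,Y_{t-1}$ and $Y_t\cup\cdots\cup Y_s$. These are nonempty and partition $Y$. Every part contains some $Y_j$ with $j\le t$, so its hull contains $\conv(Y_j)\supseteq\conv(Y_1)$. Hence all $t$ hulls contain $\conv(Y_1)$, which is nonempty because $Y_1\neq\varnothing$. The intersection of the hulls is therefore nonempty, which gives $r_t\le d(t-1)+1$.

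The only real input is the layered decomposition, which is already established in Theorem~\ref{thm:strongEquivalences}. There is no significant obstacle; the one point needing care is the counting that guarantees at least $t$ blocks. If $d=0$ is allowed, the space has no point in convex position, so every singleton lies in $\conv(\varnothing)$, which then equals $X$. In that degenerate case $t$ empty-hull parts would work, but parts must be nonempty, so the theorem should be read with $d\ge1$, as in Theorem~\ref{thm:main:short}.
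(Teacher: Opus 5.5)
Your proof is correct and matches the paper's route: the paper cites Jamison's bound without writing out a proof, presenting it as the weaker consequence of the layered decomposition in part~\ref{prop:layerStructureX} of Theorem~\ref{thm:strongEquivalences}, which is exactly the step you carry out by keeping the first $t-1$ layers and merging the rest, so that every hull contains $\conv(Y_1)\neq\varnothing$. Your closing $d=0$ aside is moot, since $\varnothing\in\C$ gives $\conv(\varnothing)=\varnothing$, so every singleton is in convex position and $d\ge1$ whenever $X\neq\varnothing$.
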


\item Deza and Frankl proved that the comatching number of hypersurfaces of degree at most $D$ in a projective space of dimension $d$ is at most $\binom{d+D}{d}$, and showed that it implies the same bound on their (strong) Helly number~\cite{deza1987helly}. The term comatching number was first used by Pohoata, Yang and Zhang in~\cite{pohoata2025colorful}, where they proved an optimal bound on the colorful Helly number for convexity spaces of bounded comatching number.

\item In \cite{gartner2014online}, G\"artner and Missura mention the equivalence of strong Helly number and VC-dimension and that Radon number is bounded by the VC-dimension plus one.
As a consequence, they observe the following.

\begin{thm}[Theorem 1 of G\"artner and Missura \cite{gartner2014online}]
    If $\C$ has VC-dimension $d$, then
    \[
    \left|\bigcap\{C \in \C: |C| \geq \varepsilon |X| \}\right| \geq |X| - d|X|(1-\varepsilon).
    \]
\end{thm}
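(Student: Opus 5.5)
Since the statement involves $|X|$, we take $X$ to be finite, as the formula implicitly requires. The plan is to read the bound off the strong Helly characterization in Theorem~\ref{thm:strongEquivalences}, followed by a union bound on complements.

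First, let
\[
\F=\{C\in\C:|C|\ge\varepsilon|X|\}.
\]
Because $X$ is finite, $\C\subseteq 2^X$ is finite, so $\F$ is a finite subfamily of $\C$. If $\F=\varnothing$, then $\bigcap\F=X$ by the convention $\bigcap\varnothing=X$. In that case the inequality is trivial, since its right-hand side is at most $|X|$.

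Next, since $\VC(\C)\le d$, item~\ref{prop:strongHellyC} of Theorem~\ref{thm:strongEquivalences} gives a subfamily $\{C_1,\ldots,C_k\}\subseteq\F$ with $k\le d$ and
\[
C_1\cap\cdots\cap C_k=\bigcap\F.
\]
Passing to complements, $X\setminus\bigcap\F=\bigcup_{i=1}^k (X\setminus C_i)$. Each $C_i\in\F$, so $|X\setminus C_i|\le(1-\varepsilon)|X|$. The union bound then gives
\[
\Bigl|X\setminus\bigcap\F\Bigr|\le k(1-\varepsilon)|X|\le d(1-\varepsilon)|X|,
\]
which rearranges to the claimed lower bound on $\bigl|\bigcap\F\bigr|$.

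There is no real obstacle here. The only points needing care are the finiteness of $\F$, which is needed to invoke the strong Helly statement (it follows from finiteness of $X$), and the degenerate empty-family case.
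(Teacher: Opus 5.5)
Your proof is correct and follows the route the paper intends: the paper presents this bound as a direct consequence of the equivalence between VC-dimension and strong Helly number, i.e.\ pick at most $d$ members of $\{C\in\C:|C|\ge\varepsilon|X|\}$ with the same intersection and apply the union bound to their complements. One tacit step: $k(1-\varepsilon)|X|\le d(1-\varepsilon)|X|$ uses $\varepsilon\le 1$, which holds automatically once the family is nonempty and $X\neq\varnothing$.
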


\item Chernikov and Mennen prove strong Radon, VC dimension, strong Carath\'eodory and strong Tverberg for nonarchimedean convex sets in \cite{chernikov2023combinatorial}. They do not state implications between the invariants, but the proofs of the other three invariants are all based on the strong Radon property.

\item Alon, Jin and Sudakov \cite{alon2024helly} prove strong Helly for Hamming balls and mention that it can be viewed as Radon's theorem.
In our language, they prove (\ref{prop:strongHellyB}) implies (\ref{prop:strongRadon}).

\item Rao \cite{rao2025helly} bounds the Radon number for $d$-intervals, but from the proof the same bound on the strong Radon number can also be reconstructed.

\item The strong Helly number for $H$-convex sets was determined in~\cite{frankl2025helly}. The bound applies for example to axis parallel boxes, where the strong (colorful) Helly number is $2d$.
\end{enumerate}

Below we give some new examples of convexity spaces with bounded strong Helly number.

\begin{enumerate}
    \item If $X$ is the vertex set of a tree with $d$ leaves, and $\C$ is the set of all the subsets spanning connected subgraphs, then it has VC-dimension $d$. As a variant, let $X$ be any tree, and let $\C$ be the set of all subtrees with at most $d-1$ leaves, plus $X$ itself.

    \item If $X$ is the vertex set of a triangle-free graph and $\G$ is the set of edges, then the generated convexity space has VC-dimension at most $2$.

    \item If $X$ is the ground set of a matroid, and $\C$ consists of the closed sets of the matroid, then the VC-dimension of convex sets is again the rank of the matroid.
\end{enumerate}

\subsection{Breadth and comatching numbers}
We review two other related strong Helly-type parameters that are defined recently.
The first one is \emph{breadth} introduced by Aschenbrenner et al.~\cite{aschenbrenner2016vapnik}, and the other one is \emph{comatching number with intersection} introduced by Pohoata et al.~\cite{pohoata2025colorful}.
We show that these two independently defined parameters are indeed identical.

We start with definitions of Helly-type parameters of our concern.
\begin{definition}
    Let $(X, \C)$ be a convexity space and $\F, \F_1, \F_2, \ldots \subseteq \C$ be families of convex sets.
    Throughout this definition, all families are finite.
    Parameters $h, ch, sh, sch, b, \tau'$ are defined as follows.
    \begin{itemize}
        \item $h$: \emph{Helly number}; smallest integer $k \geq 1$ such that every $\F$ with $\bigcap \F = \varnothing$ has a subfamily $\F'$ of size at most $k$ such that $\bigcap \F' = \varnothing$.
        \item $ch$: \emph{colorful Helly number}; smallest integer $k \geq 1$ such that every $\F_1, \ldots, \F_k$ with $\bigcap \F_i = \varnothing$ for every $i \in [k]$ has a transversal $T = \{C_1, \ldots, C_k\}$ such that $\bigcap T = \varnothing$.
        \item $sh$: \emph{strong Helly number}; smallest integer $k \geq 1$ such that every $\F$ has a subfamily $\F'$ of size at most $k$ such that $\bigcap \F = \bigcap \F'$.
        \item $sch$: \emph{strong colorful Helly number}; smallest integer $k \geq 1$ such that every $\F_1, \ldots, \F_k$ has a transversal $T = \{C_1, \ldots, C_k\}$ such that $\bigcap T \subseteq \bigcap \F_i$ for some $i \in [k]$.
        \item $b$: \emph{breadth}; smallest integer $k \geq 1$ such that whenever $\bigcap \F \neq \varnothing$, $\F$ has a subfamily $\F'$ of size at most $k$ such that $\bigcap \F = \bigcap \F'$.
        \item $\tau'$: \emph{comatching number with (nonempty) intersection}; maximal $k$ such that there exists $C_1, \ldots,$ $C_k \in \C$ and $x_1, \ldots, x_{k+1} \in X$ (which are necessarily all distinct) such that $x_i \in C_j$ if and only if $i \neq j$.
        In other words, $(C_1, x_1), \ldots, (C_k, x_k)$ is a comatching such that $\bigcap_{i = 1}^k C_i \neq \varnothing$.
    \end{itemize}
\end{definition}
\begin{thm}
    $b = \tau'$.
\end{thm}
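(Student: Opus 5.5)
We prove the two inequalities $b\le\tau'$ and $\tau'\le b$ separately. In both directions we mimic the argument already used in the proof of Theorem~\ref{thm:strongEquivalences} for the equivalence between strong Helly numbers and comatchings, while keeping track of one extra common point.

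\textbf{Upper bound $b\le\tau'$.} Let $\F\subseteq\C$ be finite with $\bigcap\F\neq\varnothing$. Choose an inclusion-minimal subfamily $\F'=\{C_1,\ldots,C_m\}\subseteq\F$ with $\bigcap\F'=\bigcap\F$. By minimality, for each $i$ we have $\bigcap_{j\ne i}C_j\supsetneq\bigcap\F'$, so we can pick
\[
x_i\in\bigcap_{j\ne i}C_j\setminus C_i .
\]
We also pick $x_{m+1}\in\bigcap\F'=\bigcap\F$, which is possible since this intersection is nonempty. Then $x_i\in C_j$ holds if and only if $i\ne j$, for all $i\in[m+1]$ and $j\in[m]$; the index $m+1$ differs from every $j$, and $x_{m+1}$ lies in all the $C_j$. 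This is a comatching with intersection of size $m$, so $m\le\tau'$. Hence every such $\F$ has an exact certificate of size at most $\tau'$, and $b\le\tau'$.

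\textbf{Lower bound $\tau'\le b$.} Take $C_1,\ldots,C_k$ and $x_1,\ldots,x_{k+1}$ witnessing $\tau'=k$. Let $\F=\{C_1,\ldots,C_k\}$. The sets $C_j$ are distinct, since $x_j\in C_i\setminus C_j$ for $i\ne j$, so $|\F|=k$. The intersection is nonempty because it contains $x_{k+1}$, so the definition of breadth gives a subfamily $\F'$ with $|\F'|\le b$ and $\bigcap\F'=\bigcap\F$. If some $C_i\notin\F'$, then $x_i\in\bigcap_{j\ne i}C_j\subseteq\bigcap\F'$, yet $x_i\notin C_i$, so $x_i\notin\bigcap\F$, a contradiction. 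Therefore $\F'=\F$ and $k\le b$.

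\textbf{Remaining points.} There is no real obstacle beyond degenerate cases. If $\tau'$ is unbounded, the second argument shows $b$ is unbounded as well, so the identity holds as an equality of extended integers. The case $m=0$ corresponds to $\F=\varnothing$; it is harmless given the convention $k\ge1$ and the fact that a single set together with one point outside it and one point inside it already gives $\tau'\ge1$. If no such configuration exists, then every $C\in\C$ is $\varnothing$ or $X$, and both parameters equal $1$ by convention. One should also note that the points $x_i$ are automatically distinct, as the definition of $\tau'$ already remarks.
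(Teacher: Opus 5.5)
Your proof is correct and is essentially the paper's argument. In both directions the key point is that an inclusion-minimal subfamily realizing a nonempty intersection, together with one extra point of that intersection, is exactly a comatching with intersection, and conversely. The only difference is cosmetic: for $b\le\tau'$ you argue directly that every family with nonempty intersection has a certificate of size at most $\tau'$, whereas the paper starts from an extremal family witnessing the value of $b$, and your phrasing is the cleaner of the two.
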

\begin{proof}
    ($b \leq \tau'$)
    Let $C_1, \ldots, C_m \in \C$ be convex sets guaranteeing the breadth $b$ from below; this means $\varnothing \neq \bigcap_{i=1}^m C_i$ and $\bigcap_{i=1}^m C_i \subsetneq \bigcap_{i \in I} C_i$ for every $I \in \binom{[m] }{b-1}$.
    By considering minimal family $\{C_1, \ldots, C_m\}$ satisfying these, we get $m = b$.

    Take arbitrary $x_{b+1} \in \bigcap_{i=1}^b C_i$ and $x_j \in (\bigcap_{i \neq j} C_i) \setminus (\bigcap_{i=1}^b C_i)$ for each $j \in [b]$.
    These points together with $C_1, \ldots, C_b$ form a comatching with intersection of size $b$.

    ($b \geq \tau'$)
    Let $C_1, \ldots, C_{\tau'} \in \C$ and $x_1, \ldots, x_{\tau'+1} \in X$ form a comatching with intersection.
    Then $x_{\tau'+1} \in \bigcap_{i=1}^{\tau'}C_i$ and $x_j \in (\bigcap_{i \neq j}C_i) \setminus (\bigcap_{i=1}^{\tau'}C_i)$ for every $j \in [\tau']$.
    Thus the intersection of family $\{C_1, \ldots, C_{\tau'}\}$ is nonempty and the intersection of every proper subfamily contains the original intersection properly.
    This shows $b > \tau' - 1$.
\end{proof}
Thus we actually have five parameters $h, ch, sh, sch, b$ of $(X, \C)$.
No pairs of these are the same parameter, and we investigate possible relations between them.
Here are the basic ones that are previously known or direct from definitions.
\begin{thm}\label{t:parameters}\phantom{}
    \begin{enumerate}
        \item $h \leq ch, sh \leq sch$.
        \item $b \leq sh \leq b+1$.
        In other words, $b \in \{sh-1, sh\}$.
        \item \label{ch<=b+1} (\cite{pohoata2025colorful}) $ch \leq b+1$.
        \item \label{sch<=sh+1} (\cite{pohoata2025colorful}) $sch \leq sh+1$.
        \item \label{h>=sh}  (\cite{pohoata2025colorful}) If $b = sh-1$, then $h \geq sh$, hence $h = ch = sh$.
    \end{enumerate}
\end{thm}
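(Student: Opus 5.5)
Items 1 and 2 follow directly from the definitions, and items 3--5 are attributed to \cite{pohoata2025colorful}. Still, I would give short self-contained arguments in the paper's language, using the comatching reformulations $b=\tau'$ and ``$sh$ equals the comatching number'', the latter being the set-system argument in the proof of Theorem~\ref{thm:strongEquivalences}.

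For item 1, $h\le sh$ holds because a strong certificate for an empty intersection is in particular a Helly certificate. For $h\le ch$, take $\F_1=\dots=\F_k=\F$ with $k=ch$. A transversal then gives at most $k$ sets of $\F$ with empty intersection. Similarly, $sh\le sch$ follows by putting $\F_i=\F$: the transversal $T$ satisfies $\bigcap T\subseteq\bigcap\F$, and the reverse inclusion is automatic. For item 2, $b\le sh$ is immediate, since breadth is the same requirement restricted to families with nonempty intersection. For $sh\le b+1$, take a family $\F$ with $\bigcap\F=\varnothing$ and an inclusion-minimal subfamily $C_1,\dots,C_m$ with empty intersection. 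The points $p_i\in\bigcap_{j\ne i}C_j$ form a comatching of size $m$, and $C_1,\dots,C_{m-1}$ together with $p_1,\dots,p_{m-1}$ and $p_m\in\bigcap_{j<m}C_j$ form a comatching with intersection of size $m-1$. Hence $m-1\le\tau'=b$, so $m\le b+1$.

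Item 5 is similar. If $b=sh-1$, then the maximal comatching $C_1,\dots,C_{sh},p_1,\dots,p_{sh}$ cannot have a common point, since otherwise $\tau'\ge sh$. So $\bigcap C_i=\varnothing$, while every proper subfamily has nonempty intersection because it contains some $p_i$. This gives $h\ge sh$, and combined with item 1 and $ch\le b+1=sh$ (item 3) we get $h=ch=sh$.

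The main obstacle is the colorful items 3 and 4, which I would prove by the standard Lov\'asz-type minimality argument. For item 3, let $k=b+1$ and suppose every transversal of $\F_1,\dots,\F_k$ has nonempty intersection. Among transversals on the first $k-1$ colors, choose $T=\{C_1,\dots,C_{k-1}\}$ with $K=\bigcap T$ inclusion-minimal, which is possible after passing to finite families. Every $C\in\F_k$ then meets $K$. By minimality of $K$, for each $C\in\F_k$ we have $K\cap C=K$ unless replacement helps; more precisely, I would apply breadth to $T\cup\{C\}$, which has nonempty intersection, to get a subfamily $T'$ of size at most $b=k-1$ with the same intersection $K\cap C$. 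If $T'$ omits some $C_i$, then replacing $C_i$ by $C$ yields a transversal of colors $[k]\setminus\{i\}$ with intersection $K\cap C$; I would symmetrize the minimality choice over all $k$ omitted colors so that this contradicts minimality unless $K\subseteq C$. Then $K\subseteq\bigcap\F_k=\varnothing$, a contradiction. Item 4 uses the same argument with $sh$ in place of $b$, where no nonemptiness hypothesis is needed, and the conclusion $\bigcap T\subseteq\bigcap\F_i$ replaces emptiness.

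The delicate point is choosing the right global minimality condition: minimize $\bigcap T$ over all transversals of any $k-1$ of the $k$ colors. This is also where I would lean on \cite{pohoata2025colorful} if the bookkeeping becomes messy.
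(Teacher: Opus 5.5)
Your proposal is correct. The paper gives no proof of this theorem: items 1 and 2 are called direct from the definitions, and items 3--5 are cited from Pohoata, Yang and Zhang. Your arguments for items 1, 2 and 5 are exactly the intended definition-chasing. In item 2, an inclusion-minimal empty subfamily of size $m$ yields a comatching with intersection of size $m-1$. In item 5, when $\tau'=b=sh-1$, a maximum comatching of size $sh$ must have empty intersection. One small addition: if item 1 is read as the chains $h\le ch\le sch$ and $h\le sh\le sch$, you should also record $ch\le sch$. It is immediate, since the $sch$-transversal satisfies $\bigcap T\subseteq\bigcap\F_i=\varnothing$.

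For items 3 and 4 your route is valid but slightly heavier than the one the paper uses in the very next theorem. You minimize $\bigcap T$ over partial transversals missing one colour. That forces the symmetrization over the omitted colour. It also needs a separate case when the certificate for $T\cup\{C\}$ drops $C$ itself; there $K\cap C\supseteq K$ gives $K\subseteq C$ directly, which you should state.

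The paper instead minimizes over full transversals $T=\{C_1,\dots,C_k\}$ with $k=b+1$ (resp.\ $sh+1$). In the contrapositive of item 3 we have $\bigcap T\ne\varnothing$. So the breadth (resp.\ strong Helly) certificate gives $\bigcap T=\bigcap(T\setminus\{C_i\})$ for some $i$. Any $D\in\F_i$ not containing $\bigcap(T\setminus\{C_i\})$ could replace $C_i$ and give a strictly smaller intersection. Hence $\bigcap T\subseteq\bigcap\F_i$. This is a contradiction for item 3 and exactly the required conclusion for item 4. This version removes the ``delicate point'' you flag and needs no case split.
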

The proof of \cite[Theorem~1.3]{pohoata2025colorful} which actually shows items \ref{ch<=b+1} and \ref{h>=sh} above, already includes enough idea for a proof of \ref{sch<=sh+1} too.
Improving its logic, we prove that we can even bound $h \geq sch$ under the assumption $b = sh-1$.
\begin{thm}
    If $b = sh-1$, then $h \geq sch$, hence $h = ch = sh = sch$.
\end{thm}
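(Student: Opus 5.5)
The plan is to prove the equivalent inequality $sch\le sh$ under the hypothesis $b=sh-1$. Write $s=sh$, so $b=s-1$, and recall from Theorem~\ref{t:parameters} item~\ref{h>=sh} that then $h=ch=s$. Hence it suffices to show: for any finite $\F_1,\dots,\F_s\subseteq\C$ with $K_i=\bigcap\F_i$, there is a transversal $T=\{C_1,\dots,C_s\}$, $C_i\in\F_i$, with $\bigcap T\subseteq K_i$ for some $i$. The tool is the identity $b=\tau'$ proved above. Under our hypothesis it says that there is no comatching $(C_1,x_1),\dots,(C_s,x_s)$ of size $s$ together with an extra point $x_{s+1}\in\bigcap_i C_i$. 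So every comatching of size $s$ in $\C$ has empty common intersection.

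First I reduce to a finite choice problem. Each $\F_i$ is finite, so there are finitely many transversals and finitely many values $\bigcap T$. Choose a transversal $T$ whose intersection $M=\bigcap T$ is inclusion-minimal among these values. If $M=\varnothing$ we are done. So assume $M\neq\varnothing$ and, for contradiction, $M\not\subseteq K_i$ for every $i$. For each $i$ pick $G_i\in\F_i$ and a point $x_i\in M\setminus G_i$.

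Next I use these to build a forbidden configuration. Consider the swapped transversals $T_i=(T\setminus\{C_i\})\cup\{G_i\}$. The aim is to arrange, by choosing $T$, the $G_i$ and the $x_i$ carefully, that $x_j\in G_i$ for all $j\ne i$. The $x_j$ lie in $M\subseteq C_i$ for every $i$, so only membership in the $G_i$ matters. If this holds, then $(G_1,x_1),\dots,(G_s,x_s)$ is a comatching of size $s$. Taking any point $x_{s+1}\in\bigcap_i G_i\cap M$ would give a comatching with intersection of size $s>b=\tau'$, a contradiction. If instead $\bigcap_i G_i\cap M=\varnothing$, I apply Helly with $h=s$ to the $2s$ sets $T\cup\{G_1,\dots,G_s\}$. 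This yields $s$ of them with empty intersection, and I try to convert these into a transversal whose intersection is empty, or strictly below $M$, contradicting minimality.

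The main obstacle is exactly the middle step: guaranteeing $x_j\in G_i$ for $j\ne i$ and controlling how many members of each colour survive in the Helly and breadth certificates. Minimality of $M$ compares $\bigcap T_i$ with $M$, but these sets need not be comparable. My first attempt is to choose $T$ and then the $G_i$ so that $\bigcap T_i\cap M$ is maximal for each $i$. I would then use breadth ($b=s-1$) on the family $T\cup\F_i$, whenever its intersection $M\cap K_i$ is nonempty, to obtain a certificate of size $\le s-1$. Such a certificate must omit some colour, and I would refill that colour from $T$ to obtain a transversal with intersection inside $K_i$. This is the counting I expect to be delicate, mirroring the proof of \cite[Theorem~1.3]{pohoata2025colorful}.
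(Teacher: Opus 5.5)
\textbf{There is a genuine gap: the proposal stops exactly at the step that has to be proved.} Your setup is the right one: choose a transversal $T=\{C_1,\dots,C_s\}$ with $M=\bigcap T$ inclusion-minimal, and assume $M\neq\varnothing$. But you never establish $x_j\in G_i$ for $j\neq i$, and there is no reason it should hold. Each $x_j$ is only known to lie in $M\setminus G_j$, and nothing links $x_j$ to $G_i$ for $i\neq j$. Your fallback routes do not close the gap either. Helly applied to $T\cup\{G_1,\dots,G_s\}$ can return an empty-intersection subfamily containing both $C_i$ and $G_i$ for some $i$. Such a family is not a transversal, and it cannot be ``refilled'' without losing emptiness. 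Breadth on $T\cup\F_i$ is likewise left as an expectation, not an argument.

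\textbf{The missing idea is to apply breadth, equivalently $\tau'=s-1$, to $T$ itself.} Since $\bigcap T=M\neq\varnothing$ and $T$ has $s=b+1$ members, some member is redundant: $\bigcap(T\setminus\{C_i\})=M$ for some $i$. In your comatching language, this says the following. If no $C_i$ were redundant, pick $y_i\in\bigcap_{j\neq i}C_j\setminus C_i$ for each $i$. These points, together with any point of $M$, would form a comatching with intersection of size $s>\tau'$, which is impossible.

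For this particular $i$, your worry that $\bigcap T_i$ and $M$ are incomparable disappears. For every $D\in\F_i$, the transversal $(T\setminus\{C_i\})\cup\{D\}$ has intersection $M\cap D\subseteq M$. Minimality of $M$ then forces $M\cap D=M$, that is, $M\subseteq D$. Hence $\bigcap T=M\subseteq\bigcap\F_i=K_i$. In your contradiction setup, this redundant index $i$ already gives $\bigcap T_i=M\cap G_i\subsetneq M$, because $x_i\in M\setminus G_i$. So the contradiction needs only one colour: no comatching on the $G_i$, no simultaneous control of all colours, and no appeal to Helly. This is precisely the paper's proof.
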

\begin{proof}
    By Theorem~\ref{t:parameters}.\ref{h>=sh} we already know $h = sh$, thus it suffices to show $sh \geq sch$.
    Let $d = sh$ and consider $d$ families of convex sets $\F_1, \ldots, \F_d \subseteq \C$.
    If there is a transversal $T$ of $\F_1, \ldots, \F_d$ such that $\bigcap T = \varnothing$, then we are done.
    Thus let us assume $\bigcap T \neq \varnothing$ for every transversal $T$ of $\F_1, \ldots, \F_d$.




    Among all possible transversals $T$ of $\F_1, \ldots, \F_d$, take a $T$ such that $\bigcap T$ is inclusion-minimal and let $T = \{C_1, \ldots, C_d\}$.
    Note that $\size{T} = b+1$.
    By the definition of breadth, $\bigcap T = \bigcap(T \setminus \{C_i\})$ for some $i$.
    Say $C_i \in \F_i$.
    Then for every $D \in \F_i$, it must be $\bigcap(T \setminus \{C_i\}) \subseteq D$; otherwise, $(T \cup \{D\}) \setminus \{C_i\}$ is a transversal whose intersection is a proper subset of $\bigcap T$.
    Therefore, we conclude that $\bigcap T = \bigcap(T \setminus \{C_i\}) \subseteq \bigcap \F_i$.
\end{proof}

\subsection{Colorful Carath\'eodory and Tverberg}\label{subsec:strong:corollaries}

In this subsection we show some corollaries for convexity spaces where the VC dimension of $\C$ is at most $d$. Most of them are about the boundedness of colorful generalizations of the strong versions of classical invariants.

The colorful Helly theorem of Lov\'asz and B\'ar\'any \cite{barany1982generalization} states that if $\F_1, \ldots, \F_{d+1}$ are finite families of convex sets in $\Re^d$ such that for all $C_1 \in \F_1, \ldots, C_{d+1} \in \F_{d+1}$ we have $\bigcap_{i=1}^{d+1}C_i \neq \varnothing$, then there exists an $\F_j$ with $\bigcap \F_j \neq \varnothing$.
In other words, the colorful Helly number of $\cvx(\Re^d)$ is at most $d+1$.

Pohoata, Yang and Zhang prove that if the comatching number of a convexity space is $d$, then its \emph{strong} colorful Helly number is at most $d+1$ \cite{pohoata2025colorful}.
We have the following dual version. The original colorful Carath\'eodory theorem for Euclidean convexity was proved by B\'ar\'any \cite{barany1982generalization}, our version can be viewed as a strengthening of the colorful Carath\'eodory number of convexity spaces.

\begin{thm}[Strong colorful Carath\'eodory theorem]\label{t:strongcolorfulCaratheodory}
    Let $(X,\C)$ be a convexity space where the VC dimension of $\C$ is at most $d$. Let $S_1, \ldots, S_{d+1} \subseteq X$ be nonempty finite subsets.
    Then there exists a transversal $T = \{p_1, \ldots, p_{d+1}\}$ of $S_1, \ldots, S_{d+1}$ such that $\conv T \supseteq \bigcap_{i=1}^{d+1} \conv S_i$.\\
    Moreover, we can find some $i \in [d+1]$ and a colorful $d$-tuple $T' = \{p_1, \ldots, p_d\}$ of $S_1, \ldots, \hat{S_i}, \ldots, S_{d+1}$ such that $\conv T' \supseteq S_i$.
\end{thm}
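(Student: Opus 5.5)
The plan is to prove the ``moreover'' statement first and deduce the first statement from it. The argument dualizes the extremal-transversal argument used above for $h\ge sch$: there we minimized an intersection over transversals, and here we maximize a convex hull. The input from Theorem~\ref{thm:strongEquivalences} is item~\ref{prop:rank} (equivalently item~\ref{prop:strongRadon}): no set of more than $d$ points is in convex position.

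\emph{Step 1 (extremal choice).} The sets $S_1,\ldots,S_{d+1}$ are finite and nonempty, so there are finitely many transversals $T=\{p_1,\ldots,p_{d+1}\}$ with $p_j\in S_j$. We may therefore fix one whose hull $\conv T$ is inclusion-maximal among the hulls of all transversals.

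\emph{Step 2 (a redundant point).} Next we find an index $i$ with $p_i\in\conv(T\setminus\{p_i\})$. If two of the chosen points coincide, say $p_i=p_j$ with $i\ne j$, this holds trivially for that $i$. Otherwise $T$ has $d+1$ distinct points. By item~\ref{prop:rank} of Theorem~\ref{thm:strongEquivalences}, $T$ is not in convex position, and this gives the index $i$. Put $T'=T\setminus\{p_i\}$, which is a colorful $d$-tuple for the classes other than $S_i$. Then $\conv T'=\conv T$.

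\emph{Step 3 (maximality forces containment).} Let $q\in S_i$ be arbitrary. The set $T'\cup\{q\}$ is again a transversal, and $\conv(T'\cup\{q\})\supseteq\conv T'=\conv T$. By the maximality of $\conv T$, equality must hold, so $q\in\conv T'$. Hence $S_i\subseteq\conv T'$, which proves the ``moreover'' part.

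\emph{Step 4 (the first statement).} Take any $p\in S_i$ and set $T''=T'\cup\{p\}$, a full transversal. Since $\conv T'$ is convex and contains $S_i$, we get $\conv T''\supseteq\conv T'\supseteq\conv S_i\supseteq\bigcap_{j}\conv S_j$.

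The only delicate points are the bookkeeping in Step~2, where points may coincide or the color classes may overlap, and the use of inclusion-maximality rather than maximum cardinality. Inclusion-maximality is what makes Step~3 work: it gives equality of hulls as soon as one hull contains the other. Finiteness guarantees that a maximal hull exists. I expect no further obstacle.
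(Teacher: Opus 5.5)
Your proof is correct and follows the paper's argument: fix a transversal whose hull is inclusion-maximal, discard one redundant point, and use maximality to conclude that the whole omitted class $S_i$ lies in $\conv T'$. The only differences are minor. The paper obtains the redundant point from strong Carath\'eodory rather than the rank bound (the two are equivalent by Theorem~\ref{thm:strongEquivalences}), and you handle coinciding points and spell out the deduction of the first statement, which the paper leaves implicit.
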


\begin{proof}
    Consider all possible convex hulls $\conv \{p_1, \ldots, p_{d+1}\}$ of rainbow $(d+1)$-tuples of $S_1, \ldots, S_{d+1}$ and choose an inclusion-maximal one.
    By strong Carath\'eodory (Theorem~\ref{thm:strongEquivalences} property \ref{prop:strongCaratheodory}), we have $\conv \{p_1, \ldots, p_{d+1}\} = \conv \{p_1,$ $ \ldots, p_d\}$ without loss of generality.
    Assume that $p_{d+1} \in S_i$.
    By maximality of $\conv \{p_1, \ldots,$ $p_{d+1}\}$, we conclude that for every $q \in S_i$, $\conv \{p_1, \ldots, p_d, q\} = \conv \{p_1, \ldots, p_d\}$. This finishes the proof.
\end{proof}

With the help of the strong colorful Carath\'eodory, we can prove the following colorful Tverberg variant.

\begin{thm}[Colorful Tverberg theorem]\label{t:colorfulTverberg}
Let $(X,\C)$ be a convexity space with $\VC(\C)\le d$, and let $S_1,\ldots,S_{d+1}\subseteq X$ be pairwise disjoint finite sets. If
$|S_i|\ge d(r-1)+1$
for every $i\in[d+1]$, then there are pairwise disjoint transversals $T_1,\ldots,T_r$ such that $\bigcap_{j=1}^r\conv(T_j)\ne\varnothing$.
\end{thm}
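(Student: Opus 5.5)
The plan is to build the transversals greedily with the ``moreover'' part of Theorem~\ref{t:strongcolorfulCaratheodory}. That part produces a class all of whose remaining points are swallowed by a single colorful hull. A pigeonhole argument over which class gets swallowed should then produce a common point of $r$ hulls.

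Concretely, set $S_i^{(1)}=S_i$ and run rounds $j=1,2,\dots$. In round $j$, apply Theorem~\ref{t:strongcolorfulCaratheodory} to the current nonempty classes $S_1^{(j)},\dots,S_{d+1}^{(j)}$. This gives an index $i_j$ and a colorful $d$-tuple $T'_j$ from the other classes with $\conv T'_j\supseteq S_{i_j}^{(j)}$. Put $T_j=T'_j\cup\{q_j\}$ for an arbitrary $q_j\in S_{i_j}^{(j)}$, so $T_j$ is a transversal. Then delete $T_j$, which removes exactly one point from every class. The key monotonicity is that the classes only shrink, so every point that survives in class $i_j$ after round $j$ still lies in $\conv T_j$. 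Call round $j$ \emph{charged to} class $i_j$.

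We stop at the first moment some class $i$ has been charged $r-1$ times, say in rounds $j_1,\dots,j_{r-1}$. Any point $p$ still remaining in $S_i$ then lies in $\conv T_{j_1}\cap\dots\cap\conv T_{j_{r-1}}$. Complete a last transversal $T_r$ by taking $p$ together with one remaining point from each other class. Trivially $p\in\conv T_r$, so $p\in\bigcap_{k}\conv T_{j_k}\cap\conv T_r$. The rounds charged to other classes are simply discarded, and disjointness is automatic because used points are deleted.

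The step I expect to be the main obstacle is the counting that guarantees every class is still nonempty when the process stops. Before some class reaches $r-1$ charges there are at most $(d+1)(r-2)+1$ rounds, and each round costs every class one point. So the naive requirement is $|S_i|\ge (d+1)(r-2)+2$. This is at most $d(r-1)+1$ only when $r\le d+1$, so the naive count falls short for large $r$.

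To close the gap I would first try to make rounds cheaper for the swallowed class. One could withhold $q_j$ and postpone its choice, so that a charged round costs class $i_j$ nothing; the $q$'s are then chosen at the end from points of $S_i$ already known to lie in the relevant hulls. Combined with counting only the $d$ non-charged classes per round, this should bring the budget to about $d(r-1)+1$. If that alone is insufficient, the fallback is to apply the theorem to only $d$ of the classes, or to choose the maximal-hull transversal adaptively so that charges concentrate on one class. The target in either case is to match the bound $d(r-1)+1$ coming from Jamison-type layering.
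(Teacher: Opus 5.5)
\textbf{Gap: the count that gives $d(r-1)+1$ is never completed.} Your strategy matches the paper's: iterate the ``moreover'' part of Theorem~\ref{t:strongcolorfulCaratheodory}, charge each round to the swallowed class, and use that classes only shrink. But as submitted it does not prove the stated bound.

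The procedure you actually specify deletes all of $T_j$, including $q_j$. As you compute, this needs $(d+1)(r-2)+2$ points per class, which exceeds $d(r-1)+1$ once $r>d+1$.

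Your repair, taking nothing from the swallowed class, is exactly what the paper does. However, you leave the count unverified, and you add a condition that would break it. The postponed $q$'s must not be restricted to points of $S_\ell$ ``already known to lie in the relevant hulls.'' A priori all of the up to $d(r-2)$ losses of $S_\ell$ can occur before its charged rounds. Then as few as $|S_\ell|-d(r-2)$ points are eligible for $p$ and the $r-1$ $q$'s. That forces $|S_\ell|\ge d(r-2)+r$, which again exceeds $d(r-1)+1$ when $r>d+1$.

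\textbf{The missing bookkeeping.} In round $j$ delete only $T'_j$. If $a_h$ counts the rounds charged to $h$ so far, class $i$ has lost exactly $\sum_{h\ne i}a_h$ points.

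Under your stopping rule, every $a_h\le r-2$ before the final round. So every class has lost at most $d(r-2)$ points and the theorem applies in every round. At the end, the class $\ell$ with $r-1$ charges has lost at most $d(r-2)$ points. Every other class has lost at most $(r-1)+(d-1)(r-2)=d(r-2)+1$.

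Consequently:
\begin{itemize}
\item a survivor $p\in S_\ell$ lies in every $\conv T'_{j_k}$;
\item every other class still has a survivor available for $T_r$;
\item the $q_k$ can be \emph{arbitrary} distinct points of $S_\ell\setminus\{p\}$, including ones deleted in discarded rounds, because $p\in\conv T'_{j_k}\subseteq\conv T_{j_k}$ whatever $q_k$ is.
\end{itemize}
This needs only $|S_i|\ge d(r-2)+2$ for $r\ge2$. That value is also at least $r$, so there are enough points for $p$ and the $q_k$, and it is at most $d(r-1)+1$.

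\textbf{Comparison with the paper.} The paper instead runs until some class has been charged $r$ times, so losses are at most $d(r-1)$, and appends arbitrary distinct $q_1,\dots,q_r\in S_\ell$. Your cheap final transversal through $p$ saves one charged round, and hence $d-1$ points per class.

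The fallbacks you mention are unnecessary. Applying the theorem to only $d$ classes is also not available without the extra hypothesis of Theorem~\ref{t:sharpstrongCC}.
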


\begin{proof}
    We repeat applying Theorem~\ref{t:strongcolorfulCaratheodory} to $(S_1, \ldots, S_{d+1})$ and update the $(d+1)$-tuple by removing the selected points from each set.

    More precisely, set $S_i^{(0)} = S_i$ for every $i$.
    Given $(S_1^{(k-1)}, \ldots, S_{d+1}^{(k-1)})$ at the $k$-th round (starting with $k = 1$), apply Theorem~\ref{t:strongcolorfulCaratheodory} to get a rainbow set $T'_k = \{p^{(k)}_1, \ldots, p^{(k)}_d\}$ and the index $f(k) \in [d+1]$ such that $S^{(k-1)}_{f(k)}$ doesn't contribute to $T'_k$.
    Define each $S_i^{(k)}$ as $S_i^{(k-1)} \setminus \{p\}$ if $p \in T'_k$ is selected from $S_i^{(k-1)}$.
    Put $S_{f(k)}^{(k)} \coloneqq S_{f(k)}^{(k-1)}$.


    We claim that the procedure can be continued until some color has been omitted $r$ times. Indeed, suppose that after $k$ rounds no color has yet been omitted $r$ times. Let $a_i(k)$ be the number of rounds among the first $k$ in which color $i$ was omitted. Then $a_i(k)\leq r-1$ for every $i$. The number of points deleted from $S_i$ is
    \[
    k-a_i(k)=\sum_{h\neq i} a_h(k)\leq d(r-1).
    \]
    Since $|S_i|\geq d(r-1)+1$, every $S_i^{(k)}$ is still nonempty. Hence the procedure can continue until, for the first time, some color has been omitted $r$ times.

    Let $1\leq j_1<\cdots<j_r$ be the first $r$ rounds in which this happens, and write $f(j_1)=\cdots=f(j_r)=\ell$.

    We have $\bigcap_{i=1}^r\conv(T'_{j_i})\supseteq S^{(j_r)}_\ell\ne\varnothing$. Since $|S_\ell|\ge d(r-1)+1\ge r$, choose distinct points $q_1,\ldots,q_r\in S_\ell$ and put $T_i=T'_{j_i}\cup\{q_i\}$. The selected $d$-tuples are disjoint by construction, and the color classes are pairwise disjoint, so the $T_i$ are pairwise disjoint transversals with a common hull point.
    \end{proof}

Pohoata et al.~\cite{pohoata2025colorful} observe that not only the bounded comatching number yields a colorful Helly theorem, but also the bounded breadth does.
Using the duality of convexity spaces, we obtain a stronger colorful Carath\'eodory theorem which uses one fewer color class under a comatching-like condition.
\begin{thm}\label{t:sharpstrongCC}
    Let $(X,\C)$ be a convexity space where the VC dimension of $\C$ is at most $d$ and assume that the convex hull of any $d$ points of $X$ in convex position is $X$ itself.
    Let $S_1, \ldots, S_d \subseteq X$ be nonempty finite subsets.
    Then there exists a transversal $T = \{p_1, \ldots, p_d\}$ of $S_1, \ldots, S_d$ such that $\conv T \supseteq \bigcap_{i=1}^d \conv S_i$.\\
    Moreover, if there is no transversal $T$ of $S_1, \ldots, S_d$ such that $\conv T = X$, then we can find some $i \in [d]$ and a rainbow ($d-1$)-tuple $T' = \{p_1, \ldots, p_{d-1}\}$ of $S_1, \ldots, \hat{S_i}, \ldots, S_d$ such that $\conv T' \supseteq S_i$.
\end{thm}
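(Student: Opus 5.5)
We would follow the proof of Theorem~\ref{t:strongcolorfulCaratheodory}, using the extra hypothesis to save one color class. Consider all convex hulls $\conv\{p_1,\ldots,p_d\}$ of rainbow $d$-tuples $p_i\in S_i$, and split into two cases.

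\emph{Case 1: some rainbow $d$-tuple has hull $X$.} Then $\conv T = X\supseteq\bigcap_i\conv S_i$ trivially, and the first assertion holds. The ``moreover'' part is only claimed when no such transversal exists, so nothing more is needed here.

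\emph{Case 2: no rainbow $d$-tuple has hull $X$.} Choose a rainbow $T=\{p_1,\ldots,p_d\}$ with $\conv T$ inclusion-maximal. First, $T$ cannot have $d$ distinct points in convex position, since then $\conv T=X$ by hypothesis. So either two points of $T$ coincide or $T$ is not in convex position. In either case some $p_i$ satisfies $p_i\in\conv(T\setminus\{p_i\})$; in the first case take $p_i$ to be a repeated point. Hence
\[
\conv T=\conv(T\setminus\{p_i\}).
\]
Set $T'=T\setminus\{p_i\}$, a rainbow $(d-1)$-tuple avoiding color $i$. For every $q\in S_i$, the set $T'\cup\{q\}$ is again rainbow, and its hull contains $\conv T'=\conv T$. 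By maximality,
\[
\conv(T'\cup\{q\})=\conv T',
\]
so $q\in\conv T'$. This gives $S_i\subseteq\conv T'$, which is the moreover statement. It also gives $\conv S_i\subseteq\conv T'=\conv T$, hence $\bigcap_j\conv S_j\subseteq\conv S_i\subseteq\conv T$, which is the first statement.

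The point to get right is the reduction step, which replaces strong Carath\'eodory with $d$ points: the hypothesis rules out convex position of $d$ distinct points. We also need to handle coincident points, since the $S_i$ need not be disjoint; a repeated point trivially lies in the hull of the rest. The VC-dimension hypothesis enters only mildly, through the fact that sets in convex position have size at most $d$ (Theorem~\ref{thm:strongEquivalences}). Finally, the hulls need not be finite in number as sets, but there are finitely many rainbow tuples, so a maximal hull exists. One could alternatively phrase the whole argument via the dual space and the breadth result of Pohoata et al., but the direct argument seems cleaner.
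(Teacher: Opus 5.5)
Your proof is correct and is essentially the paper's argument: the paper omits the proof as identical to that of Theorem~\ref{t:strongcolorfulCaratheodory} (take an inclusion-maximal rainbow hull and drop a redundant point). Your use of the hypothesis that $d$ points in convex position have hull $X$, together with your handling of coincident points, is exactly the adaptation needed to replace the strong Carath\'eodory step. One small correction: your argument never invokes the VC-dimension bound (and the hull hypothesis already forces every set in convex position to have at most $d$ points), so your remark that it enters ``mildly'' overstates its role.
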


As a corollary, we get the following colorful Tverberg theorem.
\begin{thm}\label{t:sharpcolorfulTverberg}
    Let $(X,\C)$ be a convexity space where the VC dimension of $\C$ is at most $d$ and assume that the convex hull of any $d$ points of $X$ in convex position is $X$ itself.
    Let $S_1, \ldots, S_d \subseteq X$ be pairwise disjoint finite sets.
    If for every $i \in [d]$, $\size{S_i} \geq t'(d,r) \coloneqq (d-1)(r-1) + 1$, then there exist pairwise disjoint transversals $T_1, \ldots, T_r$ of $S_1, \ldots, S_d$ such that $\bigcap_{i = 1}^r \conv T_i \neq \varnothing$.
\end{thm}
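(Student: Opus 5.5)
The plan is to copy the proof of Theorem~\ref{t:colorfulTverberg}, using the ``moreover'' part of Theorem~\ref{t:sharpstrongCC} in place of Theorem~\ref{t:strongcolorfulCaratheodory}. The greedy procedure runs on $d$ color classes instead of $d+1$. In each round it outputs a rainbow $(d-1)$-tuple missing one color $f(k)$, whose hull contains the whole current remainder of the omitted class. There is one new case: the ``moreover'' clause requires that no transversal has hull equal to $X$. If some round meets such a transversal, we have to finish the argument separately.

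Set $S_i^{(0)}=S_i$. At round $k$, apply Theorem~\ref{t:sharpstrongCC} to $S_1^{(k-1)},\ldots,S_d^{(k-1)}$.

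\emph{Case (a).} No transversal of the current sets has hull $X$. Then we obtain an index $f(k)$ and a rainbow $(d-1)$-tuple $T'_k$ avoiding color $f(k)$, with $\conv T'_k\supseteq S^{(k-1)}_{f(k)}$. Delete the points of $T'_k$ from their color classes and leave class $f(k)$ unchanged.

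The counting is the same as before, with $d-1$ other colors. Suppose that after $k$ rounds every color has been omitted at most $r-1$ times. Then class $i$ has lost $k-a_i(k)=\sum_{h\ne i}a_h(k)\le (d-1)(r-1)$ points. Since $|S_i|\ge (d-1)(r-1)+1$, every class is still nonempty.

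So if Case (a) occurs in every round, we stop when some color $\ell$ has been omitted $r$ times, in rounds $j_1<\cdots<j_r$. Then
\[
\bigcap_{i=1}^{r}\conv T'_{j_i}\supseteq S_\ell^{(j_r-1)}\ne\varnothing,
\]
because class $\ell$ only shrinks in rounds where it is not omitted, and each $\conv T'_{j_i}$ contains the remainder of class $\ell$ at its own round, which contains the later remainder. Since $|S_\ell|\ge r$, pick distinct $q_1,\ldots,q_r\in S_\ell$ and set $T_i=T'_{j_i}\cup\{q_i\}$. The hulls only grow, so the common point survives, and disjointness follows as before.

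\emph{Case (b).} At some round $k$, a transversal $T^\ast$ of the current sets has $\conv T^\ast=X$. This is the main obstacle, and I would handle it as follows. Take $T_i=T'_{j}\cup\{q\}$ for rounds already completed, where the $q$'s are distinct unused points of the omitted colors. Take one more transversal $T^\ast$ from the remaining points, whose hull is everything. Then complete the rest of the $r$ transversals from the remaining points arbitrarily.

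The intersection of all their hulls must still be nonempty. The problem is that the earlier $T'_j$ need not share a point unless they omit a common color. So the fallback is to check whether Case (b) forces more structure. Under the hypothesis, $\conv T=X$ for any $d$ points in convex position. For the remaining sets, either every transversal has hull $X$, or we can switch to Case (a) on a sub-instance.

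I expect this bookkeeping to be delicate. I would try first to argue that when Case (b) first happens, the remaining classes still admit $r-k'$ further disjoint transversals, where $k'$ is the number of rounds done so far. I would also try to show that all further transversals can be chosen with hull $X$. Then the common point is any point of $\bigcap_{i}\conv T_i$ over the Case (a) rounds, and choosing those rounds to share an omitted color reduces this to the counting above.
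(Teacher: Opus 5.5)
Your Case (a) is exactly the argument the paper intends: it says the proof is the same as that of Theorem~\ref{t:colorfulTverberg} and omits it. You are also right that the ``moreover'' clause of Theorem~\ref{t:sharpstrongCC} leaves a case uncovered. However, your proposal does not actually handle Case (b), and the fallback you sketch would not close it. Stopping at the first Case (b) round and reusing all earlier $T'_j$ fails for the reason you give yourself. The hope that all remaining transversals can be chosen with hull $X$ is false in general. Take the chain convexity $\{\varnothing,\{x_1\},\{x_1,x_2\},\ldots,X\}$ on a finite set $X$. It has VC-dimension $1$ and no two points in convex position, so the hypotheses hold with $d=2$. Yet a transversal has hull $X$ only if it contains the top element, so at most one of your disjoint transversals can have hull $X$.

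The missing idea is that a transversal with hull $X$ is harmless in any intersection. So do not stop when Case (b) occurs; bank $T^\ast$ as one of the final transversals and keep running the greedy procedure.
\begin{itemize}
\item In a Case (b) round, delete $T^\ast$ (one point from each class) and increase a counter $b$.
\item In a Case (a) round, increase $a_{f(k)}$ as before.
\item Stop the first time $a_\ell+b=r$ for some $\ell$.
\end{itemize}
As long as $a_h+b\le r-1$ for all $h$, class $i$ has lost $b+\sum_{h\ne i}a_h\le b+(d-1)(r-1-b)\le (d-1)(r-1)$ points. So every class is still nonempty and the next round can be carried out.

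At the end, output the $b$ banked transversals together with $T'_j\cup\{q_j\}$ for the $a_\ell$ rounds $j$ that omit $\ell$. Here the $q_j$ are distinct points of $S_\ell$ not lying in the banked transversals; they exist since $|S_\ell|\ge (d-1)(r-1)+1\ge r=a_\ell+b$. Disjointness holds because banked transversals and tuples were removed in different rounds, and the $T'_j$ contain no point of color $\ell$. The banked hulls equal $X$. The other hulls all contain the nonempty remainder of class $\ell$ at the last round omitting $\ell$, and if $a_\ell=0$ the intersection is $X$. So a common point exists.

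Finally, both your count and this one use $(d-1)(r-1)+1\ge r$, which requires $d\ge2$. For $d=1$ every singleton is in convex position, which forces $\C=\{\varnothing,X\}$. The statement then fails for $r\ge2$ with $|S_1|=1$, so $d\ge2$ must be assumed.
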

The proof of Theorem~\ref{t:sharpstrongCC} and \ref{t:sharpcolorfulTverberg} is exactly the same as the proof of Theorem~\ref{t:strongcolorfulCaratheodory} and \ref{t:colorfulTverberg}, so we omit it.

\subsection{Other strengthenings of the Radon number}\label{subsec:strong:otherRadon}

If a subset $S \subseteq X$ has a partition $S_1 \cup S_2 = S$ with $\conv(S_1) \subseteq \conv(S_2)$, then it has such a partition with $S_1$ being a singleton. Thus it is tempting to define the strong Radon number as the smallest $r$ such that every subset $S \subseteq X$ of size $r$ has a Radon partition with one of the parts being a singleton. But that is a weaker assumption as the following example shows.

\begin{example}
    Let $X = \{1,2,3,4\}$ and $\C = \{\varnothing, X\} \cup (\binom{X}{3}\setminus \{\{1,2,3\}\})\cup \{ \{1,4\}, \{2,4\}, \{3,4\}, \{4\}\}$. Then there exists three points ($1,2$ and $3$) in convex position, but every three points admits a Radon partition (where one of the parts has size $1$).
\end{example}

If in addition every singleton is a convex set ($\binom{X}{1} \subseteq \C$ in notation), then a Radon partition $S_1 \cup S_2 = S$ with $|S_1| = 1$ satisfies $\conv(S_1) \subseteq \conv(S_2)$ thus existence of a Radon partition with one of the parts being a singleton implies the existence of a partition with $\conv(S_1) \subseteq \conv(S_2)$. For the other implication, observe that if we have a partition with $\conv(S_1) \subseteq \conv(S_2)$, then we have such a partition with $S_1$ being a singleton. Thus the existence of a Radon partition with one of the parts being singleton is equivalent to the other properties listed in Theorem~\ref{thm:strongEquivalences} if $\binom{X}{1}\subseteq \C$. From this point of view, it is an interesting question to analyse convexity space where one of the parts of a Radon partition has fixed size.

\begin{definition}
    For a convexity space $(X,\C)$, let $r^{(n)}_t$ be the smallest number such that any subset $Y \subseteq X$ of size $r^{(n)}_t$ can be partitioned into $t$ parts $Y = \bigcup_{i=1}^t Y_i$ such that $|Y_1|=n$ and $\bigcap_i \conv(Y_i) \neq \varnothing$.
\end{definition}

The following simple observation shows that boundedness of $r^{(n)}_t$ becomes weaker as $n$ grows.

\begin{claim}\label{cl:extendedTverbergPartition}
    Let $n,k$ be positive integers. Then $r^{(n+k)}_t \leq r^{(n)}_t+k$.
\end{claim}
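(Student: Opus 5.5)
The proof is a direct padding argument. Put $m=r^{(n)}_t$ and let $Y\subseteq X$ be any set with $|Y|=m+k$. Choose an arbitrary subset $Y^\circ\subseteq Y$ of size $m$. By the definition of $r^{(n)}_t$, $Y^\circ$ has a partition $Y^\circ=Y_1\mathbin{\dot\cup}\cdots\mathbin{\dot\cup}Y_t$ with $|Y_1|=n$ and a point
\[
z\in\bigcap_{i=1}^t\conv(Y_i).
\]
Let $Z=Y\setminus Y^\circ$, which has exactly $k$ points. Set $Y_1'=Y_1\cup Z$ and $Y_i'=Y_i$ for $i\ge2$. This is a partition of $Y$ into $t$ parts with $|Y_1'|=n+k$.

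The only property of the hull operator we need is monotonicity, namely that $A\subseteq B$ implies $\conv(A)\subseteq\conv(B)$. This is immediate from $\conv(A)=\bigcap\{C\in\C:A\subseteq C\}$. It gives $\conv(Y_1)\subseteq\conv(Y_1')$, so $z\in\bigcap_i\conv(Y_i')\ne\varnothing$. Hence every $(m+k)$-point set has the required partition with first part of size $n+k$.

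To conclude $r^{(n+k)}_t\le m+k$, recall that $r^{(n+k)}_t$ is the least size at which every set has such a partition, so it suffices to have shown this for sets of size exactly $m+k$.

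The one point to watch is whether the definition is meant monotonically in the set size, i.e. whether sets larger than $r^{(n)}_t$ are also assumed to have the partition. I will read the definition at the exact size, as written. The same padding then also handles larger sets: apply the definition to any $m$-subset and put all remaining points into $Y_1$. So either reading gives the bound.

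If $r^{(n)}_t$ is infinite, the inequality holds trivially, so there is no real obstacle.
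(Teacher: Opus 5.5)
Your argument is correct and is the same padding argument as in the paper: take an $r^{(n)}_t$-element subset, use its partition with $|Y_1|=n$, and add the remaining $k$ points to $Y_1$, which only needs monotonicity of $\conv$. One small slip is in your aside on the monotone reading: for $|Y|>m+k$, putting \emph{all} leftover points into $Y_1$ gives $|Y_1|>n+k$, so you should put only $k$ of them into $Y_1$ and the rest into another part (or apply the definition to a subset of size $|Y|-k\ge m$).
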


\begin{proof}
    Let $S \subseteq X$ be a subset of size $r^{(n)}_t+k$, choose an arbitrary subset $T \subseteq S$ of size $r^{(n)}_t$, and let $T_1 \cup \cdots \cup T_t$ be a Tverberg-partition of $T$ with $|T_1| = n$. then $S_1 = T_1 \cup (S \setminus T)$, $S_i = T_i$ for $i \geq 2$ is a Tverberg-partition of $S$ with $|S_1| = n+k$.
\end{proof}

Unlike in the $n=1$ case, some of the restricted Radon numbers of Euclidean convexities are bounded.

\begin{claim}\label{ex:PairedRadon}
We have
\[
r^{(2)}_2\bigl(\cvx(\Re^2)\bigr)=5,
\qquad
r^{(2)}_2\bigl(\cvx(\Re^3)\bigr)=6,
\]
whereas $r^{(k)}_2(\cvx(\Re^{2k}))=\infty$ for every $k\ge1$.
\end{claim}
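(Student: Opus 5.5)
The plan is to treat the three assertions separately. Each equality in $\Re^2$ and $\Re^3$ needs a small configuration for the lower bound and a Radon-type argument for the upper bound. Infinitude in $\Re^{2k}$ will come from neighborly polytopes.

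\emph{Upper bounds.} First I would record a reduction. Let $S\subseteq\Re^d$ and suppose some $p\in S$ lies in $\conv(A)$ with $A\subseteq S\setminus\{p\}$ and $|A|\le |S|-2$. Choose $q\in S\setminus(A\cup\{p\})$ and put $Y_1=\{p,q\}$, $Y_2=S\setminus Y_1$. Then $p\in\conv(A)\subseteq\conv(Y_2)$, so $\conv(Y_1)\cap\conv(Y_2)\ne\varnothing$. I will call this the point-in-hull case.

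Now let $|S|=d+3$ with $d\in\{2,3\}$. Take any $d+2$ points of $S$; Radon's theorem gives a partition $P\mathbin{\dot\cup}Q$ of them with $\conv(P)\cap\conv(Q)\ne\varnothing$.
\begin{itemize}
\item If one part, say $P=\{p\}$, is a singleton, then $p\in\conv(Q)$ with $|Q|\le d+1=|S|-2$, which is the point-in-hull case.
\item If one part has size exactly $2$, say $|P|=2$, put the leftover point of $S$ into $Q$. This keeps the hulls intersecting and gives $Y_1=P$.
\end{itemize}
For $d+2\in\{4,5\}$ one of these two cases always occurs. With $4$ points the part sizes are $1|3$ or $2|2$. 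With $5$ points they are $1|4$ or $2|3$, since two parts of size at least $3$ would need $6$ points. This gives $r^{(2)}_2(\cvx(\Re^2))\le5$ and $r^{(2)}_2(\cvx(\Re^3))\le6$.

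\emph{Lower bounds.} In $\Re^2$ I would take a triangle $abc$ together with an interior point $o$ and check all pairs.
\begin{itemize}
\item For an edge pair $\{a,b\}$, the segment $ab$ is disjoint from the segment $co$, because $co$ meets $\partial\conv\{a,b,c\}$ only at $c$.
\item For a pair $\{a,o\}$, the segment $ao$ does not reach the opposite edge $bc$.
\end{itemize}
In $\Re^3$ I would take a tetrahedron $v_1v_2v_3v_4$ with an interior point $c$. For the pairs $\{v_i,c\}$ the segment $v_ic$ does not reach the opposite facet, so these are handled as in the plane. For the edge pairs we need each edge $v_iv_j$ to miss the triangle $\conv\{v_k,v_l,c\}$ spanned by the complementary edge and $c$.

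I expect this to be the main obstacle, because the centroid is a bad choice. The plane through $v_3,v_4$ and the centroid passes through the midpoint of $v_1v_2$, so the edge does meet that triangle. I would instead place $c$ generically and use barycentric coordinates $c=\sum\lambda_iv_i$. The edge $v_1v_2$ meets $\conv\{v_3,v_4,c\}$ exactly when some combination $\alpha c+\beta v_3+\gamma v_4$ has zero $v_3$ and $v_4$ coordinates. That is impossible for $\lambda_3,\lambda_4>0$ unless $\alpha=0$, and then the point lies on edge $v_3v_4$, which is disjoint from $v_1v_2$. So in fact every interior $c$ should work, and the centroid worry is resolved by this computation, which I would carry out carefully. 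This gives $r^{(2)}_2=5$ and $6$ respectively.

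\emph{Infinitude.} For $\cvx(\Re^{2k})$ I would use the cyclic polytope $C(n,2k)$ on $n$ points of the moment curve, for every $n$. It is $k$-neighborly: every $k$-subset $Y_1$ of vertices spans a face $F$, and $F$ is exposed by a supporting hyperplane. Hence $\conv(Y_1)=F\cap\conv(\mathrm{vert})$ is disjoint from the hull of the remaining vertices, since these lie strictly on the other side of the hyperplane. So no partition with $|Y_1|=k$ has intersecting hulls, and $r^{(k)}_2=\infty$.
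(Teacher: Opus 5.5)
Your proposal is correct and follows the paper's proof. The lower bound uses the same simplex-plus-interior-point configuration; your barycentric check just verifies the paper's remark that the only Radon partition there has the interior point as a singleton. The upper bound extends a Radon partition of $d+2$ points by placing the leftover point exactly as the paper does, and the infinite case uses the same $k$-neighborly cyclic polytope.

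In a write-up, delete the centroid aside, because it is false as stated. The edge $v_1v_2$ meets only the \emph{plane} of $\conv\{v_3,v_4,g\}$, at the midpoint $m_{12}=2g-m_{34}$ (with $m_{34}$ the midpoint of $v_3v_4$). That point lies outside the triangle, as your own computation shows.
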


\begin{proof}
A simplex together with an interior point has a unique Radon partition, with the interior point as a singleton; this gives the two finite lower bounds. For the upper bounds, take a Radon partition of four points in $\Re^2$, respectively five points in $\Re^3$. If it has a singleton part, add the remaining point to that part; otherwise it already has a two-point part.

For the infinite statement, take arbitrarily many points on the moment curve in $\Re^{2k}$. Every $k$ of them span a face of the resulting cyclic polytope, so their convex hull is disjoint from the convex hull of the remaining points.
\end{proof}

The following question isolates a possible intermediate parameter between the ordinary and strong Radon numbers.

\begin{question}
Can $r_t^{(2)}$ be bounded in terms of $r_2^{(2)}$?
\end{question}

Jamison proved $r_t\le(r-1)(t-1)+1$ when $r$ is the strong Radon number~\cite{jamison1981partition}. It remains open whether the same inequality holds with $r=r_2^{(2)}$.

\section{A simplification of Bukh's example}\label{sec:Bukh}

Bukh constructed convexity spaces, for every $t\ge3$, with $r_2=4$ and $r_t>3t-2$~\cite{bukh2010radon}. We give a direct bipartite-model realization of the same obstruction on only $O(t^4)$ points, avoiding the much larger auxiliary realization in the original construction.

\medskip
\noindent\textbf{The construction.}
Let $X=A\mathbin{\dot\cup}B\mathbin{\dot\cup}C$, and let the convexity be generated by $\G=\G_2\mathbin{\dot\cup}\G_3$.
Set $A=[3t-2]$. For every pair $\{i,j\}\subseteq A$, introduce $g_{ij}\in\G_2$ with
\[
N(g_{ij})\cap A=\{i,j\}.
\]
For every triple $\{i,j,k\}\subseteq A$, introduce $g_{ijk}\in\G_3$ with
\[
N(g_{ijk})\cap A=\{i,j,k\}.
\]
For each pair $\{i,j\}\subseteq A$, introduce $c_{ij}\in C$ with
\[
N(c_{ij})=\{g_{ij}\}\cup\G_3.
\]
Finally, for every unordered pair of disjoint edges $e,f\in\binom A2$, introduce one point $b_{e,f}=b_{f,e}\in B$. If $e=\{i,j\}$ and $f=\{k,\ell\}$, define
\[
N(b_{e,f})=\{g_{ij},g_{k\ell}\}\cup
\{g_{xyz}\in\G_3:\{x,y,z\}\cap(e\cup f)\ne\varnothing\}.
\]
Thus
\[
|B|=3\binom{3t-2}{4},
\]
and the ground set has size $O(t^4)$.

\begin{figure}
\centering

\newcommand{\scaleratio}{0.35}
\begin{minipage}{0.3\textwidth}
\centering
\scalebox{\scaleratio}{
\begin{tikzpicture}[every node/.style={font=\small},>=stealth]
\draw (0,4) ellipse (5 and 1.2);
\node at (-3,5.4) {\textbf{A}};
\node at (1.2,5.4) {\textbf{B}};
\node at (3.5,5.4) {\textbf{C}};
\draw (0,5.2) -- (0,2.8);
\draw (2.5,5) -- (2.5,3);
\node[circle,draw,inner sep=2pt] (i) at (-4.5,4) {$i$};
\node[circle,draw,inner sep=2pt] (j) at (-3.5,4) {$j$};
\node[circle,draw,inner sep=2pt] (k) at (-2.5,4) {$k$};
\node[circle,draw,inner sep=2pt] (l) at (-1.5,4) {$\ell$};
\draw (0,0) ellipse (5 and 1.2);
\node at (-2.5,-1.3) {\textbf{$\G_2$}};
\node at (2.5,-1.3) {\textbf{$\G_3$}};
\draw (0,1.2) -- (0,-1.2);
\node[circle,draw,inner sep=2pt] (gij) at (-2,0) {$g_{ij}$};
\node[circle,draw,inner sep=2pt] (gjkl) at (2,0) {$g_{jk\ell}$};
\draw[-] (i) -- (gij);
\draw[-] (j) -- (gij);
\draw[-] (j) -- (gjkl);
\draw[-] (k) -- (gjkl);
\draw[-] (l) -- (gjkl);
\end{tikzpicture}}
\end{minipage}%
\hfill
\begin{minipage}{0.3\textwidth}
\centering
\scalebox{\scaleratio}{
\begin{tikzpicture}[every node/.style={font=\small},>=stealth]
\draw (0,4) ellipse (5 and 1.2);
\node at (-3.5,5.4) {\textbf{A}};
\node at (0,5.4) {\textbf{B}};
\node at (3.5,5.4) {\textbf{C}};
\draw (-2.5,5) -- (-2.5,3);
\draw (2.5,5) -- (2.5,3);
\draw (0,0) ellipse (5 and 1.2);
\node at (-2.5,-1.3) {\textbf{$\G_2$}};
\node at (2.5,-1.3) {\textbf{$\G_3$}};
\draw (0,1.2) -- (0,-1.2);
\node[circle,draw,inner sep=2pt] (b) at (-0,4) {$b_{ij,k\ell}$};
\node[circle,draw,inner sep=2pt] (gij) at (-3,0) {$g_{ij}$};
\node[circle,draw,inner sep=2pt] (gkl) at (-1,0) {$g_{k\ell}$};
\node[circle,draw,inner sep=2pt] (gijk) at (1,0) {$g_{ijk}$};
\node[circle,draw,inner sep=2pt] (gjkx) at (2,0) {$g_{jkx}$};
\node[circle,draw,inner sep=2pt] (giyz) at (3,0) {$g_{iyz}$};
\node[circle,draw,inner sep=2pt] (gxyz) at (4,0) {$g_{xyz}$};
\draw[-] (b) -- (gij);
\draw[-] (b) -- (gkl);
\draw[-] (b) -- (gijk);
\draw[-] (b) -- (gjkx);
\draw[-] (b) -- (giyz);
\end{tikzpicture}}
\end{minipage}%
\hfill
\begin{minipage}{0.3\textwidth}
\centering
\scalebox{\scaleratio}{
\begin{tikzpicture}[every node/.style={font=\small},>=stealth]
\draw (0,4) ellipse (5 and 1.2);
\node at (-3.5,5.4) {\textbf{A}};
\node at (-1.2,5.4) {\textbf{B}};
\node at (3.5,5.4) {\textbf{C}};
\draw (-2.5,5) -- (-2.5,3);
\draw (0,5.2) -- (0,2.8);
\draw (0,0) ellipse (5 and 1.2);
\node at (-2.5,-1.3) {\textbf{$\G_2$}};
\node at (2.5,-1.3) {\textbf{$\G_3$}};
\draw (0,1.2) -- (0,-1.2);
\node[circle,draw,inner sep=2pt] (c) at (2,4) {$c_{ij}$};
\node[circle,draw,inner sep=2pt] (gij) at (-3,0) {$g_{ij}$};
\node[circle,draw,inner sep=6pt] (g1) at (1,0) {};
\node[circle,draw,inner sep=6pt] (g2) at (2,0) {};
\node[circle,draw,inner sep=6pt] (g3) at (3,0) {};
\draw[-] (c) -- (g1);
\draw[-] (c) -- (g2);
\draw[-] (c) -- (g3);
\draw[-] (c) -- (gij);
\end{tikzpicture}}
\end{minipage}

\caption{An illustration of the bipartite graph model of Bukh's construction.}

\end{figure}

For distinct $i,j,k\in A$,
\[
\conv(\{i\})=\{i\},\qquad
\conv(\{i,j\})=g_{ij},\qquad
\conv(\{i,j,k\})=g_{ijk}.
\]
Moreover, if $e,f\in\binom A2$ are disjoint, then
\[
\conv(e)\cap\conv(f)=\{b_{e,f}\}.
\]
These identities are the only features of the construction needed for the Tverberg obstruction.

\medskip
\noindent\textbf{Lower bound on the Tverberg number.}
We claim that $A$ has no Tverberg $t$-partition. If one part were a singleton $\{a\}$, then $a$ would have to lie in the hull of every other part. Among the remaining $t-1$ parts, one has size at most three, but no point of $A$ lies in the hull of at most three other points of $A$. Hence every part has size at least two.

No point lies in the hulls of three pair-parts: an $A$-point is incident with at most two of three disjoint pair generators, a point of $C$ is incident with only one generator from $\G_2$, and a point of $B$ with only two. Thus at most two parts have size two. Since $|A|=3t-2$, there must be exactly two pair-parts and every other part must be a triple. If the two pairs are $e$ and $f$, their hulls meet only at $b_{e,f}$. Every triple part is disjoint from $e\cup f$, so its generator is not adjacent to $b_{e,f}$; hence $b_{e,f}$ does not lie in its hull. Therefore the partition has no common Tverberg point.

\subsection{Upper bound on the Radon number.}
We now check $r_2 \leq 4$. An important claim covering many cases is the following.

\begin{claim}
If $x,y,u,v\in X$ are four distinct points and $N(\{x,y\})\subseteq\G_3$, then
\[
\conv(\{x,y\})\cap\conv(\{u,v\})\ne\varnothing.
\]
\end{claim}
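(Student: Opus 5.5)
The plan is to exhibit an explicit common point. Since $\conv(\{x,y\})=N(N(\{x,y\}))$ and $N(\{x,y\})\subseteq\G_3$, every point adjacent to all of $\G_3$ lies in $\conv(\{x,y\})$. By construction every $c_{ij}\in C$ is adjacent to all of $\G_3$, so
\[
C\subseteq\conv(\{x,y\}).
\]
(If $N(\{x,y\})=\varnothing$, the paper's convention gives $\conv(\{x,y\})=X$, and the claim is trivial.) It therefore suffices to show that $\conv(\{u,v\})$ contains some point of $C$, or some other point adjacent to every generator in $N(\{x,y\})$.

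To find such a point, I would determine $N(\{u,v\})\cap\G_2$. Every point is adjacent to at most two members of $\G_2$: a point $i\in A$ is adjacent to many $g_{ij}$, a point $c_{ij}$ to exactly one, and a point $b_{e,f}$ to exactly two, namely $g_e$ and $g_f$ with $e,f$ disjoint. The key observation is that $N(\{u,v\})\cap\G_2$ has at most one element. Points of $C$ and $B$ contribute only pairwise disjoint or fixed generators. Two distinct points of $A$ share only the single generator $g_{uv}$. Two $B$-points $b_{e,f}$ and $b_{e',f'}$ can share both $g_e$ and $g_f$ only if $\{e,f\}=\{e',f'\}$, that is, only if the points coincide. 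Mixed pairs I would check case by case in the same way.

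Then the argument splits into two cases. If $N(\{u,v\})\cap\G_2=\varnothing$, every point of $C$ is adjacent to all of $N(\{u,v\})$, since $c_{ij}$ sees all of $\G_3$; hence $C\subseteq\conv(\{u,v\})$, and any $c\in C$ is a common point. If $N(\{u,v\})\cap\G_2=\{g_{ij}\}$, then $c_{ij}$ is adjacent to $g_{ij}$ and to all of $\G_3$, so $c_{ij}\in\conv(\{u,v\})$, and also $c_{ij}\in\conv(\{x,y\})$. In both cases the hulls meet in $C$. Note that distinctness of the four points is not really needed.

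The main obstacle is verifying the claim $|N(\{u,v\})\cap\G_2|\le1$ across all type combinations. If it fails, for instance when $u$ is a single point of $A$ and $v$ happens to coincide in its $\G_2$-neighbourhood, I would instead use a $B$-point. Suppose $N(\{u,v\})\cap\G_2=\{g_e,g_f\}$ with $e,f$ disjoint; the only way two generators can arise is from a $B$-point $b_{e,f}$. Then $b_{e,f}$ is adjacent to both, and I would check whether it is adjacent to the relevant $\G_3$-generators. This last check is the delicate point, and where I expect to have to use the precise neighbourhood structure of $B$.
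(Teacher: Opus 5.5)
Your argument is correct and is the paper's own proof: show $|N(\{u,v\})\cap\G_2|\le 1$, then take $c_{ij}$ when $N(\{u,v\})\cap\G_2=\{g_{ij}\}$, and any point of $C$ otherwise. The ``obstacle'' in your last paragraph does not arise, because the only mixed case needing thought, $a\in A$ against $b_{e,f}$, gives at most one common $\G_2$-neighbour: $a$ lies in at most one of the disjoint edges $e,f$. So the fallback is vacuous and can be dropped. You should also drop the remark that distinctness is unnecessary, since the bound genuinely uses $u\ne v$: for $u=v=a\in A$ and $x=c_{12}$, $y=c_{34}$ one has $\conv(\{u\})=\{a\}$, which misses $\conv(\{x,y\})=N(\G_3)$.
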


\begin{proof}
    We have $|N(\{u,v\}) \cap \G_2|\leq 1$ for any two $u,v \in X$.
    
    If $g_{ij} \in N(\{u,v\}) \cap \G_2$, then $c_{ij} \in \conv(\{x,y\}) \cap \conv(\{u,v\}) = N(N(\{x,y\})\cup N(\{u,v\}))$, otherwise $N(\{u,v\}) \subseteq \G_3$ and even $C \subseteq \conv(\{x,y\}) \cap \conv(\{u,v\})$ is true.
\end{proof}

Let $Q \in \binom{X}{4}$. We want to show that it has a Radon partition. 

\textbf{Assume from now on that every pair from $Q$ has a common neighbor in $\G_2$.}
Otherwise there is a pair $\{x,y\} \subseteq Q$ with $N(\{x,y\}) \subseteq \G_3$, and then the claim above yields a Radon partition of $Q$.

This assumption implies in particular that $|Q\cap C| \le 1$. If $c_{ij}\in Q\cap C$, then every other point of $Q$ is adjacent to $g_{ij}$, hence
\[
N(Q\setminus\{c_{ij}\})\subseteq \{g_{ij}\}\cup \G_3 \subseteq N(c_{ij}),
\]
so $c_{ij}\in \conv(Q\setminus\{c_{ij}\})$, and $Q$ is Radon.

\textbf{Assume from now on that $Q\cap C=\varnothing$.}

If $Q = \{i,j,k,\ell\} \subseteq A$ with $i < j < k < \ell$, then
\[
b_{ij,k\ell } \in \conv(\{i,j\}) \cap \conv(\{k,\ell\}),
\]
so $Q$ is Radon.

\textbf{Assume from now on that $Q\cap B\neq\varnothing$.}
For $x\in X$, write
\[
E(x):=N(x)\cap \G_2.
\]
Thus $E(a)=\{g_{ab}:b\in A\setminus\{a\}\}$ for $a\in A$, while
$E(b_{ij,k\ell})=\{g_{ij},g_{k\ell}\}$.

If $Q$ contains exactly one point of $B$, say $b_{ij,k\ell}\in Q\cap B$. Then every point of $Q\setminus\{b_{ij,k\ell}\}\subseteq A$
must share a $\G_2$-neighbor with $b_{ij,k\ell}$, so
\[
Q\setminus\{b_{ij,k\ell}\}\subseteq \{i,j,k,\ell\}.
\]
Hence one of the pairs $\{i,j\}$ or $\{k,\ell\}$ is contained in $Q$, and since
$b_{ij,k\ell}$ lies in the convex hull of that pair, we obtain a Radon partition
\[
Q=\{i,j\}\sqcup (Q\setminus\{i,j\}).
\]

\textbf{Assume from now on that $Q$ contains at least two points of $B$.}
Then the sets $E(\beta)$, $\beta\in Q\cap B$, are pairwise intersecting 2-subsets of $\G_2$, hence they
have a common element, say $g_{uv}$. 

Indeed, for two sets this is trivial; for
three the only other possibility is the triangle pattern
$\{e_1,e_2\},\{e_1,e_3\},\{e_2,e_3\}$, which is impossible here because if
$Q$ also contains a point $a\in A$, then the star $E(a)$ cannot meet all three
generators $e_1,e_2,e_3 \in \G_2$; and for four 2-sets, pairwise
intersection already forces a common element.

If $Q\subseteq B$, then $c_{uv}$ is adjacent to $g_{uv}$ and to every element of $\G_3$, so
$c_{uv}$ lies in the convex hull of every pair of points of $Q$.
Hence any $2+2$ partition of $Q$ is a Radon partition.

\textbf{Assume from now on that $Q$ contains a point of $A$.}

If $Q$ contains some $a\in A\cap \{u,v\}$, then choose distinct $\beta_1,\beta_2\in Q\cap B$.
If also $\{u,v\}\subseteq Q$, then
\[
c_{uv}\in \conv(\{u,v\})\cap \conv(\{\beta_1,\beta_2\}),
\]
so $Q$ is Radon.
Otherwise every common neighbor of $a$ and $\beta_1$ is adjacent to $\beta_2$:
the only one in $\G_2$ is $g_{uv}$, and every one in $\G_3$ contains $a$.
Therefore
\[
\beta_2\in \conv(\{a,\beta_1\}),
\]
and $Q$ is again Radon.

\textbf{Assume finally that $Q\cap A\cap \{u,v\}=\varnothing$.}
Then every $a\in Q\cap A$ shares a $\G_2$-neighbor with every point of $Q\cap B$,
but not via $g_{uv}$.
Hence for each $\beta\in Q\cap B$, the second element of $E(\beta)$ must be incident with $a$.
If there were two distinct points $a,b\in Q\cap A$, this would force that second
element to be $g_{ab}$ for every $\beta\in Q\cap B$, which is impossible since $|Q\cap B|\ge 2$
and distinct points of $B$ have distinct $\G_2$-neighborhoods.

So $Q=\{a,\beta_1,\beta_2,\beta_3\}$ with $a\in A$, and each
\[
E(\beta_i)=\{g_{uv},g_{ax_i}\}.
\]
In particular, $a,\beta_2,\beta_3$ have no common neighbor in $\G_2$, so every common
neighbor of these three points lies in $\G_3$ and contains $a$.
Since $\beta_1$ is adjacent to every $g_{ijk} \in \G_3 \cap N(a)$, we get
\[
\beta_1\in \conv(\{a,\beta_2,\beta_3\}),
\]
and hence $Q$ is Radon.

\section{Tverberg numbers of $S_3$-separable convexity spaces}
\label{sec:separable}

We first isolate the two ingredients used throughout the section: a
Helly-type centerpoint whose containing halfspaces are all deep, and
the conversion of a net for those halfspaces into a convex-hull
certificate.  When convenient, finite point sets below may be regarded
as labelled multisets; cardinalities are then counted with
multiplicity, while repeated copies do not affect the convex hull.

\begin{lem}
\label{lem:helly-centerpoint}
Let $(X,\C)$ be an $S_3$-separable convexity space with Helly number
$h$, let $\B$ be its family of halfspaces, and let $P$ be a nonempty
finite point multiset.  There is a point $x\in X$ such that:
\begin{enumerate}
    \item\label{item:centerpoint-depth}
    every $B\in\B$ containing $x$ satisfies
    $|B\cap P|\ge \frac{|P|}{h};$
    \item\label{item:net-to-hull}
    if $A\subseteq P$ meets every halfspace in $\B$ containing $x$,
    then $x\in\conv(A)$.
\end{enumerate}
In particular, every $\varepsilon$-net for the traces of $\B$ on $P$
with $\varepsilon\le 1/h$ has convex hull containing $x$.
\end{lem}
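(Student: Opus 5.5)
Take $x$ to be a Helly-type centerpoint. Put $m=|P|$ and consider the family
\[
\mathcal K=\bigl\{\conv(A): A\subseteq P,\ |A|>m(1-1/h)\bigr\},
\]
with $A$ ranging over sub-multisets. The family $\mathcal K$ is finite, since $P$ is finite.

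\textbf{Step 1 (existence of $x$).} Any $h$ members $\conv(A_1),\dots,\conv(A_h)$ of $\mathcal K$ have a common point. Each $A_i$ misses fewer than $m/h$ elements of $P$, so together they miss fewer than $m$ elements. Hence some element $p\in P$ lies in every $A_i$, and $p$ lies in every $\conv(A_i)$. By the definition of the Helly number $h$ applied to the finite family $\mathcal K$, we get $\bigcap\mathcal K\ne\varnothing$. Choose $x$ in this intersection.

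\textbf{Step 2 (item 1).} Let $B\in\B$ contain $x$, and suppose $|B\cap P|<m/h$. Set $A=P\setminus B$. Then $|A|>m(1-1/h)$, so $\conv(A)\in\mathcal K$ and $x\in\conv(A)$. On the other hand, $A$ lies in the complement $X\setminus B$, which is convex because $B$ is a halfspace. Hence $\conv(A)\subseteq X\setminus B$, which contradicts $x\in B$. So $|B\cap P|\ge m/h$. Note that this step does not use separation at all.

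\textbf{Step 3 (item 2).} This is the step that uses $S_3$-separability. Suppose $A\subseteq P$ meets every halfspace containing $x$, but $x\notin\conv(A)$. Then $x$ is a point outside the convex set $\conv(A)$. By $S_3$-separation there are complementary halfspaces $H$ and $X\setminus H$ with $x\in H$ and $\conv(A)\subseteq X\setminus H$. Then $H\in\B$ contains $x$ and misses $A$, which is a contradiction. Therefore $x\in\conv(A)$.

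\textbf{Step 4 (the final sentence).} Let $N$ be an $\varepsilon$-net for the traces of $\B$ on $P$ with $\varepsilon\le 1/h$. By Step 2, every halfspace containing $x$ has trace of size at least $m/h\ge\varepsilon m$. By the defining property of an $\varepsilon$-net, $N$ therefore meets every such halfspace. Step 3 then gives $x\in\conv(N)$.

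The main obstacle is getting the formal details right rather than any genuinely hard step. First, with multiplicities the counting must be done with care: "$A$ misses fewer than $m/h$ elements" should count copies, so a missed point means a missed labelled element. Second, the Helly number must be applied to a finite family, which holds here. Third, the strict versus non-strict threshold has to line up so that the threshold in the definition of $\mathcal K$ matches the conclusion $|B\cap P|\ge m/h$. In $S_3$ one must also handle the trivial halfspaces $\varnothing$ and $X$. Since $x\in X$ and $X\cap P=P$ is nonempty, the halfspace $X$ causes no problem, and $\varnothing$ never contains $x$.
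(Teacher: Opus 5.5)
Your proof is correct and matches the paper's argument essentially line for line. You use the same family of hulls of submultisets of size greater than $(1-1/h)|P|$, the same Helly step, the same complement argument for item 1, and the same $S_3$-separation argument for item 2. Your Step 4 simply writes out the ``in particular'' consequence that the paper leaves implicit.
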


\begin{proof}
Write $n=|P|$ and consider the finite family
\[
\mathcal H_P=
\left\{
\conv(A):
A\subseteq P,\quad
|A|>\left(1-\frac1h\right)n
\right\}.
\]
Any $h$ submultisets occurring in this definition have a common
labelled point, since the union of their complements has size less
than $n$.  Hence every $h$ members of $\mathcal H_P$ intersect, and
the Helly property gives
\[
x\in\bigcap\mathcal H_P.
\]

Suppose that $B\in\B$ contains $x$ but $|B\cap P|<n/h$.  Then
$A=P\setminus B$ has size greater than $(1-1/h)n$.  Since
$X\setminus B$ is convex,
\[
x\in\conv(A)\subseteq X\setminus B,
\]
a contradiction.  This proves~\ref{item:centerpoint-depth}.

For~\ref{item:net-to-hull}, suppose that $x\notin\conv(A)$.
By $S_3$-separability, there is a halfspace $H$ such that
$\conv(A)\subseteq H$
and 
$x\notin H.$
The complementary halfspace $X\setminus H$ contains $x$ and is
disjoint from $A$, contrary to the hypothesis on $A$.
\end{proof}

We shall use the standard probabilistic $\varepsilon$-net theorem in
the following packed form.

\begin{lem}
\label{lem:disjoint-epsilon-nets}
There is an absolute constant $c_0$ with the following property.
Let $(V,\mathcal R)$ be a finite range space of VC-dimension at most
$d$, let $0<\varepsilon\le 1/2$, and put
$m_0=
\left\lceil
c_0\frac d\varepsilon\log\frac{2}{\varepsilon}
\right\rceil.$
Then:
\begin{enumerate}
    \item\label{item:random-net}
    for every $m$ with $m_0\le m\le |V|$, a uniformly random
    $m$-element subset of $V$ is an $\varepsilon$-net with probability
    at least $3/4$;
    \item\label{item:disjoint-nets}
    if $|V|\ge 2m_0q$, then $V$ contains $q$ pairwise disjoint
    $\varepsilon$-nets.
\end{enumerate}
\end{lem}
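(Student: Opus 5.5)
Part~\ref{item:random-net} is the standard $\varepsilon$-net theorem of Haussler and Welzl in its sampling-without-replacement form, so I will quote it rather than reprove it. The usual double-sampling argument gives this: draw a second sample $T$ of the same size $m$, reduce the failure event to the event that some range with $|R|\ge\varepsilon|V|$ misses the first sample but has at least $\varepsilon m/2$ points in $T$, and bound that by symmetrization together with Sauer--Shelah. The only point needing care is to run the argument without replacement, or to derive it from the with-replacement version. I will do the latter. Take an $m$-element uniform subset. Consider a with-replacement sample of $m' \le m$ draws, conditioned on having distinct elements, and then pad it with further uniform points. The padding is monotone: adding points to an $\varepsilon$-net keeps it an $\varepsilon$-net. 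Hence it suffices that a uniform subset of size exactly $m_0$ is a net with probability at least $3/4$, and for every larger $m$ the uniform $m$-subset contains a uniform $m_0$-subset. For the size-$m_0$ claim I will use the hypergeometric version of the double-sampling proof. The constant $c_0$ is chosen so that $\Phi_d(2m_0)\,2^{-\varepsilon m_0/2}\le 1/4$. The condition $\varepsilon\le1/2$ keeps $\log(2/\varepsilon)\ge\log 4$ bounded away from zero, which absorbs lower-order terms.

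For Part~\ref{item:disjoint-nets}, set $n=|V|$ and $m=\lfloor n/q\rfloor\ge 2m_0\ge m_0$. Take a uniformly random ordered partition of $V$ into $q$ blocks, each of size $m$ or $m+1$ (or $q$ blocks of size exactly $m$ plus a discarded leftover). Each block is marginally a uniform $m$- or $(m+1)$-subset, so by Part~\ref{item:random-net} it fails to be an $\varepsilon$-net with probability at most $1/4$. The expected number of good blocks is therefore at least $3q/4$, which gives only a constant fraction of $q$ disjoint nets, not all $q$.

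To get all $q$, I use the slack factor $2$. Partition into $q$ blocks of size $m\ge 2m_0$ and split each block into two halves of size at least $m_0$. Each half is still only a net with probability $3/4$, so splitting alone does not help. The fix is amplification: by Part~\ref{item:random-net} applied with a smaller failure probability, a uniform subset of size $m$ fails with probability at most $1/4$ raised to a power growing in $m/m_0$. Even so, a union bound over $q$ blocks needs failure probability below $1/q$, which is unavailable.

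So I will instead argue greedily and deterministically. Repeatedly pick an $\varepsilon$-net of size $m_0$ inside the remaining set $V'$ and delete it. The problem is that the nets must be nets for $V$, not for $V'$. A range $R$ with $|R|\ge\varepsilon|V|$ keeps, inside $V'$, at least $|R|-(|V|-|V'|)$ points, which can vanish. I therefore expect the intended argument to use nets relative to the current set $V'$ with parameter $\varepsilon/2$, valid while $|V|-|V'|\le \varepsilon|V|/2$, and to absorb the change in $\varepsilon$ into $c_0$.

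The remaining difficulty is the case $q m_0$ large compared with $\varepsilon n$. Here I will rely on the expectation bound instead: removing bad blocks and re-randomizing, with expected $3/4$ success per round, gives a geometric series. The total size used is at most $q m_0\sum (1/4)^k$-type quantities, which fits within $2m_0 q$. This last bookkeeping step, ensuring that all $q$ disjoint nets fit within the factor-$2$ budget, is the main obstacle, and I would try the iterated re-randomization on leftover points first.
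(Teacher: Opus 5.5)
\textbf{Part~2 is not proved, and the argument that proves it is the one you discarded.} Part~1 is handled acceptably. The paper also just invokes the standard probabilistic $\varepsilon$-net theorem, and your monotonicity reduction is correct: a uniform $m$-subset contains a uniform $m_0$-subset, and supersets of nets are nets. The gap is Part~2, which your proposal ends by calling the ``main obstacle.''

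Your claim that splitting ``alone does not help'' is false, because you do not need every block to be a net, only $q$ of them. Since $|V|\ge 2m_0q$, take a uniformly random permutation of $V$ and cut its first $2m_0q$ entries into $2q$ consecutive blocks of size $m_0$. Each block is marginally a uniform $m_0$-subset of $V$, so by Part~1 it is an $\varepsilon$-net with probability at least $3/4$. By linearity of expectation, the expected number of good blocks is at least $\tfrac34\cdot 2q=\tfrac32q\ge q$. Hence some outcome has at least $q$ good blocks, and these are pairwise disjoint. This is exactly the paper's proof. The factor $2$ in the hypothesis is there to double the number of blocks, not their size, which turns your expected count $3q/4$ into $3q/2$.

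\textbf{Your fallback routes would not work as stated.} Greedy deletion with parameter $\varepsilon/2$ relative to the remaining set is valid only while at most $\varepsilon|V|/2$ points have been removed. But $q$ nets of size $m_0$ can occupy up to half of $V$, so a range with $|R|\ge\varepsilon|V|$ can be entirely used up. Iterated re-randomization on the leftover points has the same defect. Once good blocks are removed, a uniform subset of the leftover set is no longer a uniform subset of $V$, so Part~1 does not apply to it, and ranges that are heavy in $V$ may be light or even empty in the leftover. No amplification of the failure probability and no union bound over the $q$ blocks is needed.

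A minor point on Part~1: if you pass from the with-replacement version, do not condition on the draws being distinct, since that event can have tiny probability when $m_0^2\gg|V|$. Instead, take the support of the independent draws and pad it with a uniform subset of the complement. This produces a uniform $m$-subset containing the support.
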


\begin{proof}
Part~\ref{item:random-net} is the standard probabilistic
$\varepsilon$-net theorem, after increasing the absolute constant.
For~\ref{item:disjoint-nets}, take a uniformly random permutation of
$V$ and divide its first $2m_0q$ elements into consecutive
$m_0$-element blocks
$Z_1,\ldots,Z_{2q}.$
Each $Z_i$ is marginally a uniformly random $m_0$-element subset of
$V$.  By~\ref{item:random-net}, the expected number of blocks that are
$\varepsilon$-nets is at least
$\frac34\cdot 2q>q.$
Consequently, some choice of the blocks contains at least $q$
pairwise disjoint $\varepsilon$-nets.
\end{proof}

\begin{thm}\label{thm:Helly-VC-Tverberg}
Let $(X,\C)$ be an $S_3$-separable convexity space with Helly number
$h$, and let $\B$ be its family of halfspaces.  If $\VC(\B)\le d$,
then
$r_t=O\bigl(dh\log h\bigr)t.$
\end{thm}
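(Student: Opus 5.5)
Take a finite point set $P\subseteq X$ (distinct points) with $|P|=n\ge 2m_0t$, where $m_0$ is the quantity of Lemma~\ref{lem:disjoint-epsilon-nets} for $\varepsilon=1/h$ and VC-dimension $d$. This gives $m_0=O(dh\log h)$ and $n=O(dh\log h)\,t$. The goal is to exhibit a partition of $P$ into $t$ parts whose convex hulls share a common point. The case $h=1$ is degenerate: then $m_0$ is bounded in terms of $d$, and we may simply replace $h$ by $\max\{h,2\}$ throughout, which is harmless since a Helly number $h$ space also satisfies the Helly property with any larger parameter. Also, $\varepsilon=1/h\le 1/2$, as Lemma~\ref{lem:disjoint-epsilon-nets} requires.

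First, apply Lemma~\ref{lem:helly-centerpoint} to $P$ to obtain a centerpoint $x$. By part~\ref{item:net-to-hull} of that lemma, every subset $A\subseteq P$ that meets every halfspace of $\B$ containing $x$ satisfies $x\in\conv(A)$. By part~\ref{item:centerpoint-depth}, each such halfspace contains at least $n/h$ points of $P$. Consequently, every $(1/h)$-net for the range space $(P,\{B\cap P:B\in\B\})$ meets all of these halfspaces, and so its convex hull contains $x$. This is the ``in particular'' clause of Lemma~\ref{lem:helly-centerpoint}.

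Second, consider this range space on $V=P$. Traces do not increase VC-dimension, so its VC-dimension is at most $d$. Since $|V|\ge 2m_0t$, part~\ref{item:disjoint-nets} of Lemma~\ref{lem:disjoint-epsilon-nets} produces $t$ pairwise disjoint $(1/h)$-nets $N_1,\dots,N_t\subseteq P$. Each $\conv(N_j)$ contains $x$.

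Third, assign each point of $P\setminus\bigcup_j N_j$ to $N_1$. Enlarging a part only enlarges its hull, so the result is a partition of $P$ into $t$ nonempty parts with $x\in\bigcap_j\conv(\text{part}_j)$. Hence $r_t\le 2m_0t=O(dh\log h)\,t$.

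The main obstacle is already handled by the two lemmas: we need $t$ nets that are disjoint and are all nets for the same centerpoint. Choosing the nets simultaneously through a random equipartition, rather than greedily deleting points one net at a time, is what keeps the bound linear in $t$.

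For the Radon-number corollary stated in Theorem~\ref{thm:separable}, use the standard facts that $h\le r-1$ (Levi) and that $d=\VC(\B)=O(r)$. The latter holds because a set shattered by halfspaces is shattered by $\C$, and $\VC(\C)\le r-1$ would be too strong in general. Instead, invoke the known bound that the VC-dimension of the halfspaces is less than $r$: if $r$ points were shattered by halfspaces, any Radon partition of them would be separated by a halfspace and its complement, a contradiction. Substituting these bounds gives $r_t=O(r^2\log r)\,t$.
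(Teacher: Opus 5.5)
Your proof is correct and follows the paper's own argument. You take the Helly centerpoint $x$ from Lemma~\ref{lem:helly-centerpoint}, extract $t$ pairwise disjoint nets at once with Lemma~\ref{lem:disjoint-epsilon-nets}, note that each net meets every halfspace through $x$ and so its hull contains $x$, and put the leftover points into one part. The only difference is cosmetic: you take $\varepsilon=1/h$, which forces your $h\ge 2$ adjustment to get $\varepsilon\le 1/2$, whereas the paper takes $\varepsilon=1/(2h)$, which satisfies that automatically; both choices are covered by the lemma's clause for $\varepsilon\le 1/h$.
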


\begin{proof}
Set $\varepsilon=1/(2h)$ and let $m_0$ be as in
Lemma~\ref{lem:disjoint-epsilon-nets}.  Take an arbitrary finite
$Y\subseteq X$ with
$|Y|\ge 2m_0t.$
Choose $x$ as in Lemma~\ref{lem:helly-centerpoint}, and apply
Lemma~\ref{lem:disjoint-epsilon-nets} to the traces of $\B$ on $Y$.
We obtain pairwise disjoint $\varepsilon$-nets
$Y_1,\ldots,Y_t\subseteq Y.$
Every halfspace containing $x$ contains at least $|Y|/h$ points of
$Y$, and hence is met by every $Y_i$. 
Lemma~\ref{lem:helly-centerpoint} therefore gives
$x\in\bigcap_{i=1}^t\conv(Y_i).$
Distribute the points of $Y\setminus\bigcup_iY_i$ arbitrarily among
the $Y_i$.  The resulting parts form a Tverberg partition.  Since
$m_0=O\bigl(dh\log h\bigr),$
the theorem follows.
\end{proof}

\begin{cor}[Restatement of Theorem~\ref{thm:separable}]
If an $S_3$-separable convexity space has Radon number $r$, then
$r_t=O(r^2\log r)t.$
\end{cor}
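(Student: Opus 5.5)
The plan is to deduce the corollary from Theorem~\ref{thm:Helly-VC-Tverberg} by bounding both parameters $h$ and $d=\VC(\B)$ in terms of the Radon number $r$. Once we show $h\le r-1$ and $\VC(\B)\le r-1$ (or at least $O(r)$), substituting into $r_t=O(dh\log h)\,t$ gives $r_t=O\bigl((r-1)^2\log(r-1)\bigr)t=O(r^2\log r)\,t$. (If $r\le 2$ the bound is trivial after adjusting constants, so we may assume $r\ge3$ and $\log h$ is handled by $\log r$.)

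First, the Helly bound: I would invoke Levi's classical inequality $h\le r-1$, valid in every convexity space. If it needs recalling, the argument is short. Suppose $C_1,\dots,C_m$ with $m\ge r$ are convex sets any $m-1$ of which meet, and pick $p_i\in\bigcap_{j\ne i}C_j$. If the $p_i$ coincide we are done. Otherwise a Radon partition $I\mathbin{\dot\cup}J$ of the point set gives a point $z\in\conv\{p_i:i\in I\}\cap\conv\{p_j:j\in J\}$. Every $C_k$ contains all $p_i$ with $i\ne k$, so it contains one of the two hulls, hence $z$. Thus every $r$-wise intersecting finite family meets, so $h\le r-1$. Repeated points are handled by the multiset convention or the coincidence case.

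Second, the VC bound for halfspaces: I claim no set $S$ with $|S|\ge r$ is shattered by $\B$. Take a Radon partition $S=S_1\mathbin{\dot\cup}S_2$ with a common point $z$ in both hulls. If some halfspace $B$ cut out exactly $S_1$, then $S_1\subseteq B$ and $S_2\subseteq X\setminus B$. Both $B$ and its complement are convex, so $\conv(S_1)\subseteq B$ and $\conv(S_2)\subseteq X\setminus B$, which are disjoint, contradicting $z$. Hence $\VC(\B)\le r-1$. Notably, this step does not even use $S_3$-separability; separability is used only inside the theorem.

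The only delicate point, and the step I regard as the main obstacle, is bookkeeping: the $O(\cdot)$ in Theorem~\ref{thm:Helly-VC-Tverberg} must be monotone in $d$ and $h$. This holds because $m_0$ is explicit and increasing. The small cases $h=1$, where $\log h=0$, are absorbed by noting that $m_0$ is at least $c_0 d\cdot 2h\log 4$, so the effective bound is $O(dh\log(2h))$. With $d,h\le r-1$, this yields $r_t=O(r^2\log r)\,t$.
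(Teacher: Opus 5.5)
Your proof is correct and is essentially the paper's argument: Levi's inequality gives $h\le r-1$, a Radon partition of an $r$-element set cannot be split by two complementary convex halfspaces so $\VC(\B)\le r-1$, and Theorem~\ref{thm:Helly-VC-Tverberg} then finishes (your remarks on monotonicity of $m_0$ and on the case $h=1$ add care the paper omits). Two small slips in your recap of Levi: iterating the step ``$(m-1)$-wise intersecting $\Rightarrow$ all $m$ meet'' upward from $m=r$ shows that every $(r-1)$-wise intersecting family meets, not merely every $r$-wise one, and this is what gives $h\le r-1$; also, the degenerate case is when two of the $p_i$ coincide, not when all of them do.
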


\begin{proof}
Levi's inequality gives $h\le r-1$.  Moreover,
$\VC(\B)\le r-1$: if an $r$-element set were shattered by halfspaces,
then every nontrivial partition of it would be separated by a pair of
complementary halfspaces, and hence could not be a Radon partition.
The result follows from
Theorem~\ref{thm:Helly-VC-Tverberg}.
\end{proof}

\section{Discussion}\label{sec:discussion}
The central quantitative problem is to determine the optimal dependence of $r_t$ on the ordinary Radon number. Even under separation assumptions, it is open whether a universal estimate of order $O(rt)$ holds. Our $S_3$ bound leaves a polynomial gap in $r$. A second, potentially more flexible route is the restricted parameter $r_2^{(2)}$: it is not known whether $r_2^{(2)}<\infty$ alone forces any bound on $r_t^{(2)}$.

Bounded strong Radon number, equivalently bounded VC-dimension of the convex sets, supports exact certificates and colorful theorems that have no analogue for the ordinary Radon number. Two decisive tests of the strength of this hypothesis are to determine sharp piercing bounds under the $(p,q)$-condition and to decide whether bounded VC-dimension in a convexity space forces a bounded Leray number; see~\cite[Question~4.12]{pohoata2025colorful}.

Finally, the incidence model suggests developing duality beyond finite spaces. The basis-dual Radon number is not controlled by the ordinary Radon number in general, but the obstruction in Claim~\ref{cl:noDualRadonBound} is decomposable. It would be valuable to identify natural irreducibility or connectivity assumptions under which the two parameters become quantitatively related, and to determine which generator-relative dual notions admit an intrinsic formulation.

\section*{Acknowledgement}

The third author would like to thank Istv\'an Tomon for bringing the strong Helly number to his attention.

\bibliographystyle{plain}

\bibliography{biblio}

\appendix
\section{Appendix: Selection, weak nets, and piercing consequences}
\label{subsec:separable:consequences}

The centerpoint--net principle, Lemma~\ref{lem:helly-centerpoint}, also yields selection and piercing
statements.  Throughout this subsection, $(X,\C)$ is an
$S_3$-separable convexity space, $h$ is its Helly number, $\B$ is its
family of halfspaces, and
$d=\VC(\B).$
Fix a sufficiently large absolute constant $C_0$ and put
\begin{equation}\label{eq:selection-parameter}
a=\left\lceil C_0dh\log h\right\rceil.
\end{equation}

We first record how Theorem~\ref{thm:Helly-VC-Tverberg} complements
the colorful $k$-wise method of Keller and
Smorodinsky~\cite{keller2026colorful}.

\begin{prop}[Interpolation with the colorful $k$-wise method]
\label{prop:KS-interpolation}
Suppose in addition that $(X,\C)$ is $S_4$-separable, and let $r$ be
its Radon number.  Then
\[
r_t=O\left(
rt\min\left\{
h\log(2h),\,
\min\{h,t\}\log(2rt)
\right\}
\right).
\]
\end{prop}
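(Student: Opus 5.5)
The bound is the minimum of two separate estimates, so I will prove each one and then take the smaller.

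\textbf{First term.} This comes from Theorem~\ref{thm:Helly-VC-Tverberg}. An $S_4$-separable space is in particular $S_3$-separable in the sense used there, so the theorem gives $r_t=O(dh\log h)\,t$ with $d=\VC(\B)$. The argument in the proof of the Corollary restating Theorem~\ref{thm:separable} shows $d\le r-1$: if an $r$-element set were shattered by halfspaces, every nontrivial partition of it would be separated by complementary halfspaces, so it would have no Radon partition. Substituting $d\le r-1$ yields $r_t=O(rt\,h\log h)$. Writing $\log(2h)$ instead of $\log h$ only handles the degenerate case $h=1$, where $\log h=0$; the bound $O(rt)$ remains valid there directly from Theorem~\ref{thm:Helly-VC-Tverberg}, since $m_0\ge1$.

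\textbf{Second term.} This comes from the $k$-wise Tverberg theorem of Keller and Smorodinsky~\cite{keller2026colorful}, used as a black box under $S_4$-separation. As quoted in the introduction, it gives $r_t=O(rk\,t\log(2rt))$ for a suitable admissible $k$, and we take $k=\min\{h,t\}$. Both admissibility conditions, $k\le t$ and $k\le h$, must be checked against the precise hypotheses of their theorem: the $k$-wise colorful VC-dimension they use should be bounded in terms of $r$ once $k$ is at most the Helly number, and at most $t$ colour classes should be needed. This yields $O\bigl(rt\min\{h,t\}\log(2rt)\bigr)$.

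Taking the smaller of the two estimates and factoring out $rt$ gives the displayed formula.

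\textbf{Main obstacle.} The only real risk is the second term: matching the exact form and hypotheses of the cited theorem, in particular that the choice $k=\min\{h,t\}$ is allowed and produces precisely the factor $\min\{h,t\}\log(2rt)$. If the cited statement instead requires $k=t$, I would first use $h$-wise intersection to reduce to $k=h$ when $t>h$, and take $k=t$ directly otherwise. The first term is routine given the results earlier in the paper.
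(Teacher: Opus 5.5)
Your proposal is correct and follows the paper's proof. The first estimate is Theorem~\ref{thm:Helly-VC-Tverberg} combined with $d\le r-1$ (note that $S_3$-separability is already a standing hypothesis here, which matters because $S_4$ alone does not imply $S_3$ when singletons need not be convex), and the second is the Keller--Smorodinsky $k$-wise theorem with $k=\min\{h,t\}$. However, what you list as a fallback is in fact the whole argument for the second term: their theorem gives, from $O(krt\log(2rt))$ points, a $t$-partition in which every $k$ of the hulls meet, with no admissibility condition tying $k$ to $h$; $k=h$ is chosen when $h\le t$ precisely so that the Helly property upgrades $h$-wise intersection to a common point of all $t$ hulls, while for $t<h$ one has $k=t$ and the conclusion is already a Tverberg partition.
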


\begin{proof}
Since $d\le r-1$, Theorem~\ref{thm:Helly-VC-Tverberg} gives
$r_t=O\bigl(rh\log(2h)\bigr)t.$
On the other hand, the uncolored $k$-wise Tverberg theorem of Keller
and Smorodinsky gives, from
$O\bigl(krt\log(2rt)\bigr)$
points, a partition into $t$ parts such that every $k$ of their convex
hulls intersect.  Take $k=\min\{h,t\}$.  If $h\le t$, the Helly
property upgrades $h$-wise intersection to total intersection.  If
$t<h$, then $k=t$, so the conclusion already says that all $t$ hulls
meet.  Hence
$r_t=O\bigl(r\min\{h,t\}\,t\log(2rt)\bigr).$
Taking the better of the two estimates proves the proposition.
\end{proof}

For colorful sampling we need the following product version of the
probabilistic $\varepsilon$-net theorem.

\begin{lem}[Rainbow $\varepsilon$-net lemma]
\label{lem:rainbow-epsilon-net}
There is an absolute constant $C_1$ with the following property.  Let
$\mathcal R$ be a range space of VC-dimension at most $d$, and let
$P_1,\ldots,P_m$ be pairwise disjoint nonempty finite subsets of its
ground set.  For $R\in\mathcal R$, put
\[
\mu(R)=\frac1m\sum_{i=1}^m
\frac{|R\cap P_i|}{|P_i|}.
\]
If $0<\varepsilon\le1$ and
$m\ge C_1\frac d\varepsilon\log\frac2\varepsilon,$
then a uniformly random transversal
$T=\{p_1,\ldots,p_m\}$, with the $p_i\in P_i$ chosen independently,
meets every $R\in\mathcal R$ satisfying $\mu(R)\ge\varepsilon$ with
probability at least $1/2$.
\end{lem}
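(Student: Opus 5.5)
We prove the Rainbow $\varepsilon$-net lemma by running the classical double-sampling argument of Haussler--Welzl, with the symmetrization step carried out coordinatewise: we swap within each color class rather than across an exchangeable sample. Let $T=(p_1,\ldots,p_m)$ be the random transversal, and let $T'=(p'_1,\ldots,p'_m)$ be an independent copy, so that $p_i,p'_i$ are i.i.d.\ uniform in $P_i$. Call the event that some $R$ with $\mu(R)\ge\varepsilon$ misses $T$ the bad event $E_1$. Let $E_2$ be the event that some $R\in\mathcal R$ misses $T$ but contains at least $\varepsilon m/2$ of the points $p'_i$.

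The first step shows $\Pr[E_2]\ge \frac12\Pr[E_1]$. Condition on $T$ and fix a witnessing $R$ with $\mu(R)\ge\varepsilon$. The count $|\{i:p'_i\in R\}|$ is a sum of independent Bernoulli variables with means $|R\cap P_i|/|P_i|$, and its expectation is $m\mu(R)\ge\varepsilon m$. A Chernoff bound, or simply Chebyshev since the variance is at most the mean, gives that this count is at least $\varepsilon m/2$ with probability at least $1/2$ once $\varepsilon m\ge 8$. The hypothesis on $m$ guarantees this after choosing $C_1\ge 8$.

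The second step bounds $\Pr[E_2]$ by symmetrization. Condition on the unordered pairs $\{p_i,p'_i\}$ for $i\in[m]$; conditionally, each pair is independently ordered uniformly, because $p_i,p'_i$ are i.i.d. The $2m$ sampled points form a set $W$ of at most $2m$ points, and by Sauer--Shelah the traces of $\mathcal R$ on $W$ number at most $(2em/d)^d$. Fix one trace $R\cap W$ that contains at least $k=\lceil\varepsilon m/2\rceil$ of the $p'_i$. For it to miss every $p_i$, no pair may have both elements in $R$: any pair with both points in $R$ already kills the event. Each pair with exactly one point in $R$ must be oriented so that this point is $p'_i$, which happens with probability $1/2$ independently across pairs, and there are at least $k$ such pairs. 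So the probability is at most $2^{-\varepsilon m/2}$. A union bound then gives
\[
\Pr[E_1]\le 2\left(\frac{2em}{d}\right)^d 2^{-\varepsilon m/2}.
\]

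The final step is the calculation that this bound is at most $1/2$ when $m\ge C_1\frac d\varepsilon\log\frac2\varepsilon$ with $C_1$ large. This is the usual computation, since $d\log(m/d)$ is dominated by $\varepsilon m$ at this scale. We also dispose of the case $d=0$ separately, where $\mathcal R$ has a single trace and the argument is trivial.

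The main obstacle is the symmetrization step, because the points are not exchangeable across color classes. The fix is to restrict swaps to within each color class $i$, which preserves the distribution. Our concern is that a point may appear in both $R$-positions of a pair, or that repeated points across classes could break independence. The color classes are pairwise disjoint, so points from different pairs are distinct, and within a pair the case where both points lie in $R$ only helps us.
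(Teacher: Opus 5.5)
Your proof is correct and is essentially the paper's argument. It uses an independent transversal $T'$ and a Chernoff step (Chebyshev works just as well) giving $\Pr(E_1)\le 2\Pr(E_2)$. It then conditions on the unordered pairs $\{p_i,p'_i\}$, applies Sauer--Shelah on at most $2m$ points, bounds each trace by $2^{-\varepsilon m/2}$, and finishes with a union bound.

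One correction: the case $d=0$ is not trivial; the statement is false there, because the hypothesis on $m$ becomes vacuous. For instance, take $m=1$, $|P_1|\ge 3$, $\mathcal R=\{\{a\}\}$ with $a\in P_1$, and $\varepsilon=1/|P_1|$. So you should simply assume $d\ge1$, as the paper tacitly does. This assumption is also what makes $\varepsilon m\ge 8$ follow from the hypothesis on $m$ once $C_1$ is large enough.
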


\begin{proof}
Let $T'=\{p'_1,\ldots,p'_m\}$ be an independent transversal,
and let $\mathcal E$ be the event that $T$ misses some
$R\in\mathcal R$ with $\mu(R)\ge\varepsilon$.  Conditional on $T$ and
on a chosen witness $R$, the random variable $|R\cap T'|$ is a sum of
independent Bernoulli variables with expectation at least
$\varepsilon m$.  By the Chernoff bound,
$\Pr\left(
|R\cap T'|<\frac{\varepsilon m}{2}
\right)
\le
\exp\left(-\frac{\varepsilon m}{8}\right)
\le\frac12,$
after increasing $C_1$.  Therefore
\begin{equation}\label{eq:symmetrization-rainbow}
\Pr(\mathcal E)\le 2\Pr(\mathcal E'),
\end{equation}
where $\mathcal E'$ is the event that some $R\in\mathcal R$ satisfies
\[
R\cap T=\varnothing
\qquad\text{and}\qquad
|R\cap T'|\ge\frac{\varepsilon m}{2}.
\]

Condition on the unordered labelled pairs
$\{p_i,p'_i\}$ and then expose which member of each pair belongs to
$T$.  On the resulting multiset of at most $2m$ labelled points, the
number of traces of members of $\mathcal R$ is at most
$\sum_{j=0}^d\binom{2m}{j}
\le
\left(\frac{2em}{d}\right)^d$
by the Sauer--Shelah lemma.  A fixed trace witnessing $\mathcal E'$
contains exactly one member of at least $\varepsilon m/2$ of the
pairs, and all these members must be assigned to $T'$.  The
probability of this event is at most $2^{-\varepsilon m/2}$.  Hence
\[
\Pr(\mathcal E')
\le
\left(\frac{2em}{d}\right)^d2^{-\varepsilon m/2}
\le\frac14
\]
for
$m\ge C_1(d/\varepsilon)\log(2/\varepsilon)$ and $C_1$ sufficiently
large.  Together with~\eqref{eq:symmetrization-rainbow}, this proves
the lemma.
\end{proof}

We can now strengthen the selection lemmas of Keller and
Smorodinsky~\cite{keller2026colorful}.

\begin{thm}[Selection lemma]\label{thm:separable-selection}
Let $P\subseteq X$ be finite with $n=|P|\ge a$.  There is a point
$x\in X$ such that
\[
\left|
\left\{
A\in\binom{P}{a}:x\in\conv(A)
\right\}
\right|
\ge \frac12\binom{n}{a}.
\]
The same statement holds for a labelled point multiset, with
cardinalities counted with multiplicity.
\end{thm}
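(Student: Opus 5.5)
Take $x$ to be the Helly-type centerpoint supplied by Lemma~\ref{lem:helly-centerpoint} for $P$; the proof is then a counting form of the argument in Theorem~\ref{thm:Helly-VC-Tverberg}. By part~\ref{item:centerpoint-depth} of that lemma, every halfspace $B\in\B$ containing $x$ satisfies $|B\cap P|\ge n/h$. By part~\ref{item:net-to-hull}, any $A\subseteq P$ that meets every such halfspace has $x\in\conv(A)$. It therefore suffices to show that at least half of all $a$-subsets of $P$ are $\varepsilon$-nets, with $\varepsilon=1/h$, for the traces on $P$ of the range space $\mathcal R_x=\{B\in\B: x\in B\}$. This range space is a subfamily of $\B$, so its VC-dimension is at most $d$.

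To count these nets, apply Lemma~\ref{lem:disjoint-epsilon-nets}\ref{item:random-net} to the finite range space $(P,\{B\cap P: B\in\B\})$ with $\varepsilon=1/h$. If $h=1$, replace $\varepsilon$ by $1/2$. Then every convex set containing a point of $P$ must contain all of $P$, and the conclusion is easy directly; alternatively, one can use $\varepsilon=1/(2h)$ throughout, exactly as in the proof of Theorem~\ref{thm:Helly-VC-Tverberg}. The lemma says that a uniformly random $m$-element subset of $P$ is an $\varepsilon$-net with probability at least $3/4$ whenever $m_0\le m\le n$, where $m_0=\lceil c_0 d h\log(2h)\rceil$. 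Choosing $C_0$ large compared with $c_0$ gives $a\ge m_0$, and $a\le n$ by hypothesis. So a uniformly random $A\in\binom{P}{a}$ is an $\varepsilon$-net with probability at least $3/4\ge 1/2$. Each such $A$ meets every halfspace containing $x$, since each of these has measure at least $\varepsilon$, and hence $x\in\conv(A)$. This gives the bound $\frac12\binom{n}{a}$ with room to spare.

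For the multiset version, relabel the points so that $P$ becomes an honest set of $n$ labelled copies. Halfspace membership of a copy is determined by its underlying point, so the trace system on labelled copies still has VC-dimension at most $d$. Lemma~\ref{lem:helly-centerpoint} is already stated for multisets, and repeated copies do not change convex hulls, so the same argument applies verbatim with cardinalities counted with multiplicity.

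The only point needing care is the constant bookkeeping. One has to make sure that $a=\lceil C_0dh\log h\rceil$ dominates $m_0$, which involves $\log(2h)$ rather than $\log h$; for $h\ge2$ this is absorbed into $C_0$, and the degenerate case $h=1$ is handled separately as above. One also has to check that the probabilistic net theorem is being used in its fixed-size, without-replacement form, which is exactly what Lemma~\ref{lem:disjoint-epsilon-nets}\ref{item:random-net} provides. I do not expect a genuine obstacle beyond this.
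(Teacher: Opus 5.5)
Your proposal is correct and is essentially the paper's proof: take $x$ from Lemma~\ref{lem:helly-centerpoint}, use the random-net part of Lemma~\ref{lem:disjoint-epsilon-nets} to see that a uniformly random $A\in\binom{P}{a}$ meets every halfspace containing $x$ with probability at least $3/4$, and conclude $x\in\conv(A)$ (the paper simply fixes $\varepsilon=1/(2h)$ from the outset). Your bookkeeping about $\log(2h)$ versus $\log h$ is a little more careful than the paper's. When $h=1$, an $S_3$-separable space has only $\varnothing$ and $X$ as convex sets, since a nontrivial halfspace and its complement would violate $h=1$; so your direct argument works for any $a\ge1$. By contrast, \eqref{eq:selection-parameter} literally gives $a=0$ there, a degenerate case in which the statement itself fails because $\conv(\varnothing)=\varnothing$.
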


\begin{proof}
Choose $x$ from Lemma~\ref{lem:helly-centerpoint}.  By
Lemma~\ref{lem:disjoint-epsilon-nets}.\ref{item:random-net}, with
$\varepsilon=1/(2h)$ and the choice of $a$ in
\eqref{eq:selection-parameter}, a uniformly random
$A\in\binom{P}{a}$ meets every halfspace containing $x$ with
probability at least $1/2$.  Lemma~\ref{lem:helly-centerpoint} then gives $x\in\conv(A)$.
\end{proof}

\begin{thm}[Colorful selection lemma]
\label{thm:separable-colorful-selection}
Let $P_1,\ldots,P_a\subseteq X$ be pairwise disjoint finite sets of a
common size $n\ge1$.  There is a point $x\in X$ such that
\[
\left|
\left\{
(p_1,\ldots,p_a)\in P_1\times\cdots\times P_a:
x\in\conv\{p_1,\ldots,p_a\}
\right\}
\right|
\ge \frac12 n^a.
\]
\end{thm}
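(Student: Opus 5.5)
We combine the Helly-type centerpoint of Lemma~\ref{lem:helly-centerpoint} with the rainbow $\varepsilon$-net lemma, Lemma~\ref{lem:rainbow-epsilon-net}. This mirrors the proof of Theorem~\ref{thm:separable-selection}, with the uniform random $a$-subset replaced by a uniformly random transversal.

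The first step is to choose the centerpoint. Let $P=P_1\mathbin{\dot\cup}\cdots\mathbin{\dot\cup}P_a$, so that $|P|=an$. Apply Lemma~\ref{lem:helly-centerpoint} to $P$; it gives a point $x$ with two properties. First, every halfspace $B\in\B$ containing $x$ satisfies $|B\cap P|\ge an/h$. Second, any $A\subseteq P$ meeting all such halfspaces has $x\in\conv(A)$.

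Next we translate this depth into the averaged measure $\mu$ of Lemma~\ref{lem:rainbow-epsilon-net}. Because all classes have the common size $n$,
\[
\mu(B)=\frac1a\sum_{i=1}^a\frac{|B\cap P_i|}{n}=\frac{|B\cap P|}{an}\ge\frac1h .
\]
This is the point where the equal-size hypothesis is used: with unequal sizes the uncolored depth would not control $\mu$. We then apply Lemma~\ref{lem:rainbow-epsilon-net} to the range space of halfspaces $\B$, which has VC-dimension $d$, with $m=a$ and $\varepsilon=1/h$. The lemma needs $m\ge C_1(d/\varepsilon)\log(2/\varepsilon)=C_1dh\log(2h)$. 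This holds for $a=\lceil C_0dh\log h\rceil$ once $C_0$ is large compared with $C_1$, absorbing $\log(2h)$ into $O(\log h)$. For $h=1$ we take the convention $\log h\ge1$ implicit in \eqref{eq:selection-parameter}, or note that the statement is trivial there. The conclusion is that a uniformly random transversal $T=\{p_1,\ldots,p_a\}$ meets every $B\in\B$ with $\mu(B)\ge 1/h$ with probability at least $1/2$. In particular, with probability at least $1/2$ it meets every halfspace containing $x$.

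Finally, property~\ref{item:net-to-hull} of Lemma~\ref{lem:helly-centerpoint} turns each such transversal into one with $x\in\conv\{p_1,\ldots,p_a\}$. Since the uniform transversal is uniform on $P_1\times\cdots\times P_a$, a set of size $n^a$, at least $\frac12 n^a$ tuples have this property.

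The only delicate point is the constant bookkeeping in the middle step, namely checking that the choice of $a$ in \eqref{eq:selection-parameter} dominates the threshold of Lemma~\ref{lem:rainbow-epsilon-net} at $\varepsilon=1/h$. This is routine after enlarging $C_0$. Everything else is a direct substitution.
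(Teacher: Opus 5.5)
Your proposal is correct and follows the paper's proof essentially step for step. You apply Lemma~\ref{lem:helly-centerpoint} to $P=P_1\sqcup\cdots\sqcup P_a$, use the equal-size identity $\mu(B)=|B\cap P|/(an)\ge 1/h$, invoke the rainbow $\varepsilon$-net lemma, and conclude with property~\ref{item:net-to-hull}. The only difference is cosmetic: you take $\varepsilon=1/h$ where the paper takes $\varepsilon=1/(2h)$, and either choice works because the depth bound already gives $\mu(B)\ge 1/h$.
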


\begin{proof}
Apply Lemma~\ref{lem:helly-centerpoint} to
$P=P_1\sqcup\cdots\sqcup P_a$
and let $x$ be the resulting point.  If $B\in\B$ contains $x$, then
\[
\frac1a\sum_{i=1}^a\frac{|B\cap P_i|}{n}
=
\frac{|B\cap P|}{an}
\ge\frac1h.
\]
Apply Lemma~\ref{lem:rainbow-epsilon-net} with
$\varepsilon=1/(2h)$ to the halfspaces containing $x$.  By the choice
of $a$, a uniformly random rainbow transversal meets every such
halfspace with probability at least $1/2$. 
Lemma~\ref{lem:helly-centerpoint} then implies that its convex hull
contains $x$.
\end{proof}

The colorful selection lemma gives a Tverberg tradeoff in which the
number of colors is independent of the number of desired parts.

\begin{cor}[A second colorful Tverberg tradeoff]
\label{cor:separable-colorful-tverberg-tradeoff}
Let $t\ge1$, and let $P_1,\ldots,P_a\subseteq X$ be pairwise disjoint
finite sets satisfying
\[
|P_i|\ge 2a(t-1)+1
\qquad\text{for every }i\in[a].
\]
Then there are pairwise disjoint rainbow sets
$A_1,\ldots,A_t$, each containing exactly one point from every color
class, such that
\[
\bigcap_{j=1}^t\conv(A_j)\ne\varnothing.
\]
\end{cor}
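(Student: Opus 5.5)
We prove Corollary~\ref{cor:separable-colorful-tverberg-tradeoff} by repeatedly applying Theorem~\ref{thm:separable-colorful-selection} and using the fact that every application produces many rainbow transversals whose convex hulls contain one common point. The plan is to let a counting argument supply all $t$ disjoint rainbow sets at once, instead of peeling them off one by one.

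First we equalize the color classes. We may assume that every $|P_i|=n$ with $n=2a(t-1)+1$, by discarding extra points; throwing points away only shrinks the family of available transversals, and the statement is monotone in this sense. Theorem~\ref{thm:separable-colorful-selection} then gives a point $x$ together with a family $\mathcal T\subseteq P_1\times\cdots\times P_a$ of rainbow $a$-tuples such that $x\in\conv(A)$ for every $A\in\mathcal T$, and $|\mathcal T|\ge\frac12 n^a$.

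The core step is to find $t$ pairwise disjoint members of $\mathcal T$, that is, a matching of size $t$ in the $a$-partite $a$-uniform hypergraph $\mathcal T$. We argue greedily. Suppose we have already chosen $s<t$ pairwise disjoint tuples in $\mathcal T$. Together they use $sa$ points, exactly $s$ from each color class. The tuples that meet one of these used points number at most $\sum_{i=1}^a s\,n^{a-1}=sa\,n^{a-1}$, because fixing the $i$-th coordinate to one of the $s$ used points of $P_i$ leaves $n^{a-1}$ choices for the rest. We can therefore add another tuple whenever $|\mathcal T|>sa\,n^{a-1}$. Since $|\mathcal T|\ge\frac12 n^a$, it suffices that $\frac12 n>(t-1)a$, i.e.\ $n>2a(t-1)$, which is exactly our hypothesis $n\ge 2a(t-1)+1$. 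This step is the crux, and it is the reason for the factor $2$: it matches the density $\frac12$ in the selection lemma.

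Finally, the chosen tuples $A_1,\ldots,A_t$ are pairwise disjoint and rainbow, and $x\in\bigcap_j\conv(A_j)$. The main thing to check is that the selection lemma is applied to sets of exactly equal size, which is where the truncation in the first step is used.
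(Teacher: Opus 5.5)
Your proposal is correct and matches the paper's proof: truncate each class to $2a(t-1)+1$ points, apply the colorful selection lemma to get at least half of all rainbow transversals with hulls through a common point $x$, and then find $t$ pairwise disjoint ones because at most $a(t-1)$ used points can block at most $a(t-1)n^{a-1}<\frac12 n^a$ transversals. Your greedy extension is the paper's maximal-matching and vertex-cover count, phrased step by step.
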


\begin{proof}
Set $M=2a(t-1)+1$, and replace each $P_i$ by an arbitrary
$M$-element subset.  By
Theorem~\ref{thm:separable-colorful-selection}, there is a point
$x\in X$ contained in the hulls of at least $M^a/2$ rainbow
transversals.  Form the $a$-partite $a$-uniform hypergraph whose edges
are precisely these transversals.

If this hypergraph had no matching of size $t$, a maximal matching
would have at most $t-1$ edges, and the union of those edges would be
a vertex cover of size at most $a(t-1)$.  Since each vertex belongs to
at most $M^{a-1}$ rainbow transversals, the hypergraph would have at
most
\[
a(t-1)M^{a-1}<\frac12M^a
\]
edges, a contradiction.  Hence it has a matching
$A_1,\ldots,A_t$ of size $t$, and every $\conv(A_j)$ contains $x$.
\end{proof}

We next record the weak-net consequence.  A set $N\subseteq X$ is a
\emph{weak $\varepsilon$-net} for a finite point set $P\subseteq X$ if
\[
N\cap\conv(Q)\ne\varnothing
\]
for every $Q\subseteq P$ with $|Q|\ge\varepsilon|P|$.  The same
definition applies to labelled multisets, with sizes counted with
multiplicity.

\begin{cor}[Weak $\varepsilon$-nets]
\label{cor:separable-weak-epsilon-net}
For every finite point set or labelled point multiset $P$ and every
$0<\varepsilon\le1$, there is a weak $\varepsilon$-net
$N\subseteq X$ satisfying
\[
|N|\le 2\left(\frac e\varepsilon\right)^a.
\]
\end{cor}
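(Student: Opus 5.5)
The plan is to build the weak $\varepsilon$-net greedily, using the selection lemma (Theorem~\ref{thm:separable-selection}) in the standard Alon--B\'ar\'any--F\"uredi--Kleitman way.

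Let $P$ have size $n$. The case $\varepsilon n<a$ should be handled separately first. For $a$-subsets, every $Q$ with $|Q|\ge\varepsilon n$ has size at most... this is not directly useful. Instead, for small $\varepsilon n$, I would take $N=P$ itself. Every nonempty $Q\subseteq P$ satisfies $Q\subseteq\conv(Q)$, so $N$ meets every such hull. Then $|P|=n<a/\varepsilon\le 2(e/\varepsilon)^a$, since $a\ge1$ and $\varepsilon\le1$. Thus we may assume $\varepsilon n\ge a$.

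In the main case, I would iteratively pick points $x_1,x_2,\ldots$ and maintain the family $\mathcal Q$ of all $a$-subsets $A\in\binom Pa$ whose hull avoids the points chosen so far. At each step, if some $Q\subseteq P$ with $|Q|\ge\varepsilon n$ still has $\conv(Q)$ disjoint from the current $N$, apply Theorem~\ref{thm:separable-selection} to $Q$ (note $|Q|\ge a$). This gives a point $x$ lying in the hulls of at least $\frac12\binom{|Q|}{a}\ge\frac12\binom{\lceil\varepsilon n\rceil}{a}$ of the $a$-subsets of $Q$. No $A\subseteq Q$ can have been covered already: $\conv(A)\subseteq\conv(Q)$ and $\conv(Q)$ misses $N$. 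So adding $x$ to $N$ removes at least $\frac12\binom{\lceil\varepsilon n\rceil}{a}$ fresh members from $\mathcal Q$. The process stops with a weak $\varepsilon$-net once no bad $Q$ remains, and the number of steps is at most
\[
\frac{2\binom{n}{a}}{\binom{\lceil\varepsilon n\rceil}{a}}.
\]

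The main obstacle is bounding this ratio by $(e/\varepsilon)^a$ (times $1$), giving $|N|\le2(e/\varepsilon)^a$. I would try the bounds $\binom na\le (en/a)^a$ and $\binom{m}{a}\ge(m/a)^a$ with $m=\lceil\varepsilon n\rceil\ge a$. These give the ratio $\le (en/m)^a\le(e/\varepsilon)^a$, which is exactly the claimed constant. The multiset version is identical with labelled points, since the selection lemma is stated for labelled multisets as well.
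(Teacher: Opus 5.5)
Your proposal is correct and follows the paper's proof essentially verbatim: the same small case (taking $N$ to be $P$, or its support for multisets, with $n<a/\varepsilon\le 2(e/\varepsilon)^a$), the same greedy removal of alive $a$-subsets via the selection lemma applied to a bad $Q$, and the same estimate $2\binom{n}{a}/\binom{s}{a}\le 2(en/s)^a\le 2(e/\varepsilon)^a$. Your threshold $\varepsilon n<a$ differs only cosmetically from the paper's $\lceil\varepsilon n\rceil<a$, and both splits work.
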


\begin{proof}
Write $n=|P|$ and $s=\lceil\varepsilon n\rceil$.  If $s<a$, take
$N$ to be the support of $P$.  Then
$|N|\le n<\frac a\varepsilon
\le 2\left(\frac e\varepsilon\right)^a.$
Assume that $s\ge a$.  We construct $N$ greedily.  Call an
$a$-submultiset $A$ of $P$ alive if
$N\cap\conv(A)=\varnothing.$
If the current $N$ is not a weak $\varepsilon$-net, there is a
submultiset $Q\subseteq P$ with $|Q|\ge s$ and
$N\cap\conv(Q)=\varnothing.$
By Theorem~\ref{thm:separable-selection}, some point $x\in X$ belongs
to the hulls of at least
$\frac12\binom{|Q|}{a}
\ge\frac12\binom{s}{a}$
$a$-submultisets of $Q$.  All of them are alive before $x$ is added to
$N$, and none is alive afterwards.  Thus each greedy step kills at
least $\frac12\binom{s}{a}$ alive submultisets.  Since initially
there are $\binom{n}{a}$ of them, the process stops after at most
$2\frac{\binom{n}{a}}{\binom{s}{a}}
\le
2\left(\frac{en}{s}\right)^a
\le
2\left(\frac e\varepsilon\right)^a$
steps.  At termination, $N$ is a weak $\varepsilon$-net.
\end{proof}

Finally, we insert the improved weak-net exponent into the standard
Alon--Kleitman scheme.  We state the result in terms of arbitrary
fractional Helly data.  Say that $q_0$ is a fractional Helly number
with function $\beta:(0,1]\to(0,1]$ if, whenever at least an
$\alpha$-fraction of the $q_0$-subfamilies of a finite family of
convex sets intersect, some point belongs to at least a
$\beta(\alpha)$-fraction of the family.

\begin{cor}[A quantitative $(p,q)$-theorem]
\label{cor:separable-pq}
Assume that $\C$ has fractional Helly number at most $q_0$ with
fractional Helly function $\beta$.  Let $p\ge q\ge q_0$, and let
$\F\subseteq\C$ be a finite family of nonempty convex sets satisfying
the $(p,q)$-property.  Put
\[
p'=(p-1)(q-1)+1,
\qquad
\alpha=\frac{\binom q{q_0}}{\binom{p'}{q_0}},
\qquad
\gamma=\beta(\alpha).
\]
Then $\F$ has a transversal of size at most
$2\left(\frac e\gamma\right)^a.$
\end{cor}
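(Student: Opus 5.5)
We run the Alon--Kleitman scheme: first upgrade the $(p,q)$-property to a fractional Helly hypothesis, then use fractional Helly to put positive weight on a point-rich configuration, and finally replace linear-programming duality plus weak nets by a weighted weak net from Corollary~\ref{cor:separable-weak-epsilon-net}.

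\textbf{Step 1 (from $(p,q)$ to many intersecting $q_0$-tuples).} By the standard counting, any $p'=(p-1)(q-1)+1$ members of $\F$ contain an intersecting $q$-subfamily. Indeed, among any $p'$ sets, repeatedly removing intersecting $q$-tuples is impossible beyond a point, or more simply one applies the $(p,q)$-property to disjoint $p$-blocks. Each intersecting $q$-subfamily contributes $\binom{q}{q_0}$ intersecting $q_0$-subfamilies. A double count over all $p'$-subfamilies then shows that at least an $\alpha$-fraction of all $q_0$-subfamilies intersect. This holds also for multisets obtained by repeating members of $\F$ according to integer weights; the degenerate case of a repeated set is harmless, since a $q_0$-tuple with repeats intersects iff its distinct members do. We apply the $(p,q)$-property to $p$ distinct sets, and if fewer than $p$ distinct sets are present we pad within the $p'$ count.

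\textbf{Step 2 (fractional transversal).} Consider the fractional transversal LP $\tau^*(\F)$. By LP duality $\tau^*=\nu^*$, and the optimal fractional matching can be taken rational, i.e.\ a multiset $\F^\ast$ of members of $\F$ with multiplicities. Step~1 applied to $\F^\ast$ together with the fractional Helly function gives a point in at least a $\gamma$-fraction of $\F^\ast$. Hence every fractional matching has value at most $1/\gamma$, and so $\tau^*\le 1/\gamma$. Let $w$ be an optimal fractional transversal supported on points $x\in X$. Since the family is finite, we may restrict to finitely many candidate points, one from each nonempty cell of the Venn diagram.

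\textbf{Step 3 (weak net).} Make $w$ rational and scale it to a labelled multiset $P$ of points with total mass $|P|$. Every $C\in\F$ then contains at least $|P|/\tau^*\ge\gamma|P|$ points of $P$, counted with multiplicity. Since $C$ is convex, $C\supseteq\conv(C\cap P)$. Corollary~\ref{cor:separable-weak-epsilon-net} with $\varepsilon=\gamma$ produces a weak $\gamma$-net $N$ of size at most $2(e/\gamma)^a$. It meets $\conv(C\cap P)\subseteq C$ for each $C$, so $N$ is a transversal.

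The main obstacle is Step~1's counting with weighted multisets. The repeated-member issue must be handled cleanly, which we would do by treating copies as distinct labelled sets and noting that the $(p,q)$-property applied to labelled families with repeats still yields an intersecting $q$-subset, since repeated copies intersect trivially.
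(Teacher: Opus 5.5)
Your overall route is the paper's. You reduce the $(p,q)$-property to the fractional Helly hypothesis on labelled multisets of members of $\F$. You then dualize to obtain a rational point multiset $P$ with $|F\cap P|\ge\gamma|P|$ for every $F\in\F$. Finally you apply the weak $\gamma$-net of Corollary~\ref{cor:separable-weak-epsilon-net} together with $\conv(F\cap P)\subseteq F$. Phrasing the middle step as $\tau^*=\nu^*$ on the finite Venn-cell LP is equivalent to the paper's minimax argument, and slightly tidier. Since the optimal fractional matching is rational, you apply fractional Helly to a single multiset and can skip the paper's approximation of arbitrary distributions. You should still scale multiplicities so that multiset has at least $p'$ members before double counting.

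The step you yourself call the main obstacle, however, is not justified as written. For distinct members nothing needs proof, since $p'\ge p$, so the remarks about disjoint $p$-blocks and removing intersecting $q$-tuples are beside the point. Your closing claim is false if read for $p$ labelled members, namely that the $(p,q)$-property applied to a labelled family with repeats still yields an intersecting $q$-subset. Take $\F=\{A,B\}$ with $A\cap B=\varnothing$ and $p=q=3$, so the property holds vacuously; the multiset $A,A,B$ has no intersecting triple.

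What makes the double count work is the pigeonhole dichotomy the paper uses. Among any $p'=(p-1)(q-1)+1$ labelled members, one of two cases holds:
\begin{itemize}
\item at least $p$ distinct sets of $\F$ occur, and the $(p,q)$-property applies to $p$ of them;
\item at most $p-1$ distinct sets occur, so one of them has at least $\lceil p'/(p-1)\rceil=q$ copies, and these intersect because the sets are nonempty.
\end{itemize}
This is exactly why $p'$ has its value. With this inserted, every $p'$-subfamily contains at least $\binom{q}{q_0}$ intersecting $q_0$-subfamilies, and the rest of your argument goes through.
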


\begin{proof}
We first prove a weighted consequence of fractional Helly.  Let
$\F'$ be an arbitrary finite labelled multiset whose members are
taken from $\F$.  Repeating all labels equally if necessary, assume
that $n=|\F'|\ge p'$.

Every $p'$ labelled members of $\F'$ contain $q$ members with
nonempty intersection.  Indeed, either they contain at least $p$
distinct members of $\F$, in which case the $(p,q)$-property applies,
or some member occurs at least
$\left\lceil\frac{p'}{p-1}\right\rceil=q$
times.  Let $M_{q_0}$ be the number of intersecting labelled
$q_0$-subfamilies of $\F'$.  Counting pairs $(I,J)$, where $J$ is a
$p'$-subfamily and $I\subseteq J$ is an intersecting
$q_0$-subfamily, gives
$M_{q_0}\binom{n-q_0}{p'-q_0}
\ge
\binom n{p'}\binom q{q_0}.$
Since
$\binom n{p'}\binom{p'}{q_0}
=
\binom n{q_0}\binom{n-q_0}{p'-q_0},$
we obtain
$\frac{M_{q_0}}{\binom n{q_0}}
\ge
\frac{\binom q{q_0}}{\binom{p'}{q_0}}
=
\alpha.$
The fractional Helly property therefore yields a point contained in
at least $\gamma n$ members of $\F'$, counted with multiplicity.

It follows, by approximation, that for every probability distribution
on the members of $\F$, some point is contained in sets of total
weight at least $\gamma$.  There are only finitely many incidence
patterns of points on the finite family $\F$.  Applying the minimax
theorem to the resulting finite zero--one matrix gives a finitely
supported probability distribution $\mu$ on $X$ such that
\[
\mu(F)\ge\gamma
\qquad\text{for every }F\in\F.
\]
Equivalently, this is the usual finite-dimensional linear-programming
duality argument.  Since the corresponding finite game has a rational
optimal strategy, we may clear denominators and obtain a finite
labelled multiset $Y$ of points satisfying
\begin{equation}\label{eq:fractional-piercing-measure}
|F\cap Y|\ge\gamma|Y|
\qquad\text{for every }F\in\F.
\end{equation}

Apply Corollary~\ref{cor:separable-weak-epsilon-net} to $Y$ with
$\varepsilon=\gamma$.  We obtain a set $N\subseteq X$ of size at most
$2(e/\gamma)^a$ which meets the convex hull of every submultiset of
$Y$ of size at least $\gamma|Y|$.  For every $F\in\F$,
\eqref{eq:fractional-piercing-measure} and the convexity of $F$ give
$\conv(F\cap Y)\subseteq F.$
Hence $N\cap F\ne\varnothing$ for every $F\in\F$, so $N$ is the
required transversal.
\end{proof}

By the fractional Helly theorem for $S_3$-separable convexity
spaces~\cite{holmsen2024helly}, a space of Radon number $r$ admits
fractional Helly data with $q_0\le 2^r$.  Thus
Corollary~\ref{cor:separable-pq} applies to every such space.  More
importantly for the quantitative comparison, all the consequences
above use the parameter
\[
a=O\bigl(dh\log h\bigr)
=
O(r^2\log r).
\]
For $S_4$-separable spaces this improves the $O(r^3)$ parameter in
the selection, colorful selection, weak-net, $(p,q)$, and second
colorful Tverberg results of Keller and
Smorodinsky~\cite{keller2026colorful}.  Their adaptation to
$S_3$-separation gives the corresponding parameter
$O(r^4\log r)$ under an additional compactness assumption; the
arguments above require neither that assumption nor separation of two
disjoint convex sets.

\end{document}